\documentclass[a4paper,reqno]{amsart}
\usepackage[]{amsmath, amsthm, amsfonts, verbatim, amssymb}
\usepackage{tikz}
\usetikzlibrary{matrix,arrows}
\usepackage{url}
\usepackage[all,cmtip]{xy}

\usepackage{graphicx}
\usepackage{pinlabel}
\let\bbbibitem\bibitem
\renewcommand{\bibitem}[2][]{\bbbibitem[#1]{#2}\label{#2}}

\hoffset=-6.5mm
\textwidth=14cm
\newcommand\C{{\mathbb C}}
\newcommand\projmap{{\varpi}}

\def\cM{{\mathcal M}}
\def\ocM{\overline{\mathcal M}}

\def\cH{\mathcal H}
\def\cO{{\mathcal O}}

\def\cT{{\mathcal T}}

\def\bZ{{\mathbb Z}}
\def\bQ{{\mathbb Q}}
\def\bC{{\mathbb C}}

\def\cA{{\mathcal A}}
\def\cB{{\mathcal B}}
\def\cT{{\mathcal T}}

\def\cM{{\mathcal M}}
\def\cP{{\mathcal P}}
\def\cC{{\mathcal C}}

\begin{document}
\newtheorem {theo}{Theorem}
\newtheorem {coro}{Corollary}
\newtheorem {conj}{Conjecture}
\newtheorem {lemm}{Lemma}
\newtheorem {rem}{Remark}
\newtheorem {defi}{Definition}
\newtheorem {ques}{Question}
\newtheorem {prop}{Proposition}
\def\spb{\smallpagebreak}
\def\mpb{\vskip 0.5truecm}
\def\bpb{\vskip 1truecm}
\def\wtM{\widetilde M}
\def\tM{\widetilde M}
\def\wtN{\widetilde N}
\def\tN{\widetilde N}
\def\tR{\widetilde R}
\def\tC{\widetilde C}
\def\tX{\widetilde X}
\def\tY{\widetilde Y}
\def\tE{\widetilde E}
\def\tL{\widetilde L}
\def\tQ{\widetilde Q}
\def\tS{\widetilde S}
\def\tx{\widetilde x}
\def\talpha{\widetilde\alpha}
\def\ti{\widetilde \iota}
\def\hM{\hat M}
\def\hq{\hat q}
\def\hR{\hat R}
\def\bs{\bigskip}
\def\ms{\medskip}
\def\ni{\noindent}
\def\td{\nabla}
\def\pd{\partial}
\def\hol{$\text{hol}\,$}
\def\Log{\mbox{Log}}
\def\bfQ{{\bf Q}}
\def\Todd{\mbox{Todd}}
\def\top{\mbox{top}}
\def\Pic{\mbox{Pic}}
\def\bP{{\bf P}}
\def\dxi{d x^i}
\def\dxj{d x^j}
\def\dyi{d y^i}
\def\dyj{d y^j}
\def\dzi{d z^I}
\def\dzj{d z^J}
\def\ozi{d{\overline z}^I}
\def\ozj{d{\overline z}^J}
\def\oz1{d{\overline z}^1}
\def\oz2{d{\overline z}^2}
\def\oz3{d{\overline z}^3}
\def\sI{\sqrt{-1}}
\def\hol{$\text{hol}\,$}
\def\ok{\overline k}
\def\ol{\overline l}
\def\oJ{\overline J}
\def\oT{\overline T}
\def\oS{\overline S}
\def\oV{\overline V}
\def\oW{\overline W}
\def\oY{\overline Y}
\def\oL{\overline L}
\def\oI{\overline I}
\def\oK{\overline K}
\def\oE{\overline E}
\def\oL{\overline L}
\def\oj{\overline j}
\def\oi{\overline i}
\def\ok{\overline k}
\def\oz{\overline z}
\def\om{\overline mu}
\def\on{\overline nu}
\def\oa{\overline \alpha}
\def\ob{\overline \beta}
\def\oGamma{\bar \Gamma}
\def\of{\overline f}
\def\og{\overline \gamma}
\def\ogamma{\overline \gamma}
\def\odelta{\overline \delta}
\def\otheta{\overline \theta}
\def\ophi{\overline \phi}
\def\opd{\overline \partial}
\def\oA{\overline A} 
\def\oB{\overline B}
\def\oC{\overline C}
\def\op{\overline D}
\def\ov{\overline \varphi}
\def\oIq1{\oI_1\cdots\oI_{q-1}}
\def\oIq2{\oI_1\cdots\oI_{q-2}}
\def\op{\overline \partial}
\def\ua{{\underline {a}}}
\def\us{{\underline {\sigma}}}
\def\Chow{{\mbox{Chow}}}
\def\vol{{\mbox{vol}}}
\def\rank{{\mbox{rank}}}
\def\diag{{\mbox{diag}}}
\def\pr{\mbox{pr}}
\def\tor{\mbox{tor}}
\def\bp{{\bf p}}
\def\bk{{\bf k}}
\def\a{{\alpha}}
\def\tchi{\widetilde{\chi}}
\def\ta{\widetilde{\alpha}}
\def\ovarphi{\overline \varphi}
\def\ocH{\overline{\cH}}
\def\tV{\widetilde{V}}
\def\tf{\widetilde{f}}
\def\th{\widetilde{h}}
\def\tT{\widetilde{T}}
\def\ocM{\overline{\cM}}
\def\ocA{\overline{\cA}}
\def\ocB{\overline{\cB}}
\def\o{\mbox{orb}}
\def\Jac{\mbox{Jac}}
\def\Aut{{\rm Aut}}
\def\tC{\widetilde{C}}
\def\oQ{\overline Q}
\def\tpi{\widetilde\pi}
\def\hP{\widehat P}
\def\hC{\widehat C}
\def\hD{\widehat D}
\def\obQ{\overline{\bQ}}
\def\IIm{\mbox{Im}}
\def\hE{\widehat{E}}
\def\tdelta{\widetilde{\delta}}
\def\tD{\widetilde{D}}
\def\wh{\widehat}
\def\hX{\widehat X}
\def\hq{\widehat q}
\def\hC{\widehat C}
\def\oX{\overline X}
\def\P{{\mathbb P}}
\def\Q{{\mathbb Q}}
\def\Z{{\mathbb Z}}
\def\PU{{\rm PU}}
\def\U{{\rm U}}
\def\SU{{\rm SU}}
\def\GL{{\rm GL}}
\def\PGL{{\rm PGL}}
\def\F{{\mathbb F}}
\def\Ro{{\hat R_{N,d,2}}}
\def\Po{{\P^1_{2,3,N}}}
\newcommand{\Ham}{{\mathbb H}}
\newcommand{\twoball}{B^2_\C}
\newcommand{\Adj}{\mathrm{Adj}}
\newcommand{\XX}{{X}}
\newcommand{\cd}{{c'}}
\newcommand{\comp}{{{\scriptstyle\circ}}}
\def\widebar{\overline}
\def\Dred{D^{{\rm red}}}
\def\dan{\delta^{{\rm an}}}
\def\dtop{\delta^{{\rm top}}}
\def\cru{\omega}
\def\NL{\Delta}
\def\unk{\kappa}
\def\pro{q}

\setcounter{section}{-1}

\author[Donald I.~Cartwright, Vincent Koziarz, Sai-Kee Yeung]{Donald I.~Cartwright, Vincent Koziarz, Sai-Kee Yeung}
\address[Donald I.~Cartwright]{University of Sydney, Sydney NSW 2006 Australia}
\email{donald.cartwright@sydney.edu.au}
\address[Vincent Koziarz]{Univ. Bordeaux, IMB, UMR 5251, F-33400 Talence, France}
\email{vkoziarz@math.u-bordeaux1.fr}
\address[Sai-Kee Yeung]{Purdue University, West Lafayette, IN 47907, USA}
\email{yeung@math.purdue.edu}

\title
[On the Cartwright-Steger surface]
{On the Cartwright-Steger surface}

{}

\thanks{ \\Key words: Special complex surface, Albanese fibration, Complex ball quotients\\
{\it AMS 2010 Mathematics subject classification: Primary 14D06, 14J29, 32J15} \\
The third author was partially supported by a grant from the National Science Foundation}

\maketitle
\ni{\bf Abstract} {\it  In this article, we study various concrete algebraic and differential geometric properties of the Cartwright-Steger surface,
the unique smooth surface of Euler number~3 which is neither a projective plane nor a fake projective plane.
In particular, we determine the genus of a generic fiber of the Albanese fibration, and deduce that the singular fibers are not totally geodesic, 
answering an open problem about fibrations of a complex ball quotient over a Riemann surface.
} 

\section{Introduction}

The Cartwright-Steger surface was found during work on the
classification of fake projective planes completed in \cite{PY} and \cite{CS}.
A fake projective plane is a smooth surface with the same Betti
numbers as the projective plane but not biholomorphic to it.  It is
known that a fake projective plane is a complex two ball quotient
$\Pi\backslash B_{\bC}^2$ with Euler number~3, where $\Pi$ is an
arithmetic lattice in $\PU(2,1)$, cf. \cite{PY}.  In the scheme of
classification of fake projective planes carried out in \cite{PY}, it was
conjectured but not proved in~\cite{PY} that the lattice~$\Pi$ associated
to a fake projective plane cannot be defined over a pair of number
fields $\cC_{11}=(\bQ(\sqrt3),\bQ(\zeta_{12}))$, where $\zeta_{12}$ is
a $12$-th root of unity. Such a~$\Pi$ would be of index~864 in a
certain maximal arithmetic subgroup~$\bar\Gamma$ of~$\PU(2,1)$. As
reported in~\cite{CS}, the authors showed using a lengthy computer search
that there is no torsion free lattice $\Pi$ of index~864
in this~$\bar\Gamma$ with $b_1(\Pi)=0$, but surprisingly there is one with
$b_1(\Pi)=2$.  The surface $\Pi\backslash
B_{\bC}^2$ is the subject of study in this article.

The Cartwright-Steger surface is unique as a Riemannian manifold 
with the given Euler and first Betti numbers, but
has two different biholomorphic structures given by complex
conjugation.  From an algebraic geometric point of view,
the fake projective planes and the
Cartwright-Steger surfaces are interesting since they have the
smallest possible Euler number, namely~3, among smooth surfaces of
general type, and constitute all such surfaces.  From a differential geometric point of view,
they are interesting since they constitute smooth complex hyperbolic space forms, or complex ball quotients, of smallest
volume in complex dimension two.
We refer the reader to \cite{R}, \cite{Y1}, and \cite{Y} for some general discussions related
to the above facts.  Unlike fake projective 
planes, whose lattices arise from division algebras
of non-trivial degree as classified, the Cartwright-Steger surface is
defined by Hermitian forms over the number fields mentioned above.  It is
realized among experts that such a surface is commensurable to a
Deligne-Mostow surface, the type of surfaces which have been studied
by Picard, Le Vavasseur, Mostow, Deligne-Mostow, Terada and many
others, cf.~\cite{DM1}.

Even though the lattice involved is described in \cite{CS2}, it is
surprising that the algebraic geometric structures of the surface are
far from being understood.  A typical problem is to find out the genus
of a generic fiber of the associated Albanese fibration.  Conventional
algebraic geometric techniques do not seem to be readily applicable to
such a problem.  The goal of this article is to develop tools and
techniques which allow us to understand concrete surfaces such as the
Cartwright-Steger surface.  In particular, we recover algebraic
geometric properties from a description of the fundamental group of the
surface, using a combination of various algebraic geometric,
differential geometric, group theoretical techniques and computer
implementations.

Here are the results obtained in this paper.  

\bs
\ni{\bf Main Theorem} 
{\it Let $X$ be the Cartwright-Steger surface and $\a: X\rightarrow T$ the Albanese map.
\begin{itemize}
\item[(a)] The genus of a generic fiber of $\a$ is $19$.  
\item[(b)] All fibers of $\a$ are reduced.
\item[(c)] The Albanese torus $T$ is $\bC/(\bZ+\cru\bZ)$, where $\cru$ is a cube root of unity.
\item[(d)] The Picard number of $X$ is $3$, equal to $h^{1,1}(X)$, so that all the Hodge $(1,1)$ classes are algebraic. 
The N\'eron-Severi group is generated by three immersed totally geodesic curves we explicitly give.
\item[(e)] The automorphism group~$\Sigma$ of $X$, isomorphic to $\bZ_3$, has $9$ fixed points, and induces
a nontrivial action on~$T$ which has 3 fixed points. Three fixed points of~$\Sigma$ lie over each fixed point in~$T$.
Over one fixed point on~$T$, the three fixed points of~$\Sigma$ are of type~$\frac13{(1,1)}$. The
other~6 fixed points of~$\Sigma$ are of type $\frac13{(1,2)}$.
\end{itemize}
}
\bs

The Main Theorem follows from Theorem~\ref{th:genus}, Corollary~\ref{coro:numsing}, Lemma~\ref{lem:actiontorus}, Corollary~\ref{coro:picard}, Lemma~\ref{lem:numind} and Proposition~\ref{prop:fixpoints}.
As an immediate consequence, see Theorem~\ref{thm:answertomok}, we have answered an open problem communicated 
to us by Ngaiming Mok on properties of fibrations on complex ball quotients.

\ms
\ni{\bf Corollary} 
{\it There exists a fibration of a smooth complex two ball quotient over a smooth Riemann surface with non-totally geodesic singular fibers.}

\ms


 Here are a few words about the presentation of the article. To streamline our arguments
and to make the results more understandable, we state and prove the geometric
results of the article sequentially in the sections 3 to 5 of the article. Many of the results rely on computations in the groups $\Pi$ 
and~$\bar\Gamma$, often obtained with assistance of the algebra package Magma, and we present these exclusively in the first two sections of the paper (except for the proof of Proposition~\ref{prop:fixpoints}) with a geometric perspective each time it is possible. More details appear in a longer version of this paper and on the webpage of the first author~\cite{CKY}.

\medskip
\noindent{\it Acknowledgments.} Donald Cartwright thanks Jonathan Hillman for help in obtaining
standard presentations of surface groups from non-standard ones.
Vincent Koziarz would like to thank
Aurel Page for his help in computations involving Magma, Duc-Manh
Nguyen for very useful conversations about triangle groups and Riemann
surfaces, Arnaud Ch\'eritat for his help in drawing pictures and Fr\'ed\'eric Campana for useful comments. Sai-Kee Yeung would 
like to thank Martin Deraux, Igor Dolgachev, Ching-Jui Lai, Ngaiming Mok and
Domingo Toledo for their interest and helpful comments. 
The main results of this paper were presented at the 4th South 
Kyushu workshop on algebra, complex ball quotients and related topics, 
July 22--25, 2014, Kumamoto, Japan.  The first and the third authors thank Fumiharu Kato for his kind invitation.

Since completing this paper, we were informed by Domingo Toledo that he, 
Fabrizio Catanese, JongHae Keum and Matthew Stover had independently proved 
some of our results in a paper they are preparing.

\section{Basic facts}\label{sec:basic}

\subsection{}\label{sec:matrix}
Let $F$ be a Hermitian form on $\C^3$ with signature $(2,1)$. We denote by $\U(F)=\{g\in\GL(3,\C)\,|\,g^*Fg=F\}$ the subgroup of $\GL(3,\C)$ preserving the form $F$, by $\SU(F)$ the subgroup of $\U(F)$ of elements with determinant 1, and by $\PU(F)$ their image in $\PGL(3,\C)$. The group $\PU(F)$ is naturally identified with the group of biholomorphisms of the two-ball $\twoball(F):=\{[z]\in\P^2_\C=\P(\C^3)\,|\,F(z)<0\}$.

Our aim is to study a special complex hyperbolic surface $X=\Pi\backslash\twoball(F)$ where $\Pi$ is a cocompact torsion-free lattice in some $\PU(F)$. The group $\Pi$ appears as a finite index subgroup of an arithmetic lattice $\bar\Gamma$ which can be easily described as follows.

Let $\zeta=\zeta_{12}$ be a primitive 12-th root of unity. Then
$r=\zeta+\zeta^{-1}$ is a square root of~3. Let $\ell=\Q(\zeta)$
and $k=\Q(r)\subset\ell$. For 
real and complex calculations below, we take $\zeta=e^{\pi i/6}$, and then $r$ is
the positive square root of~3. We could define $\bar\Gamma$ to be the group of 
$3\times3$ matrices~$g'$ with entries in~$\ell$ such that ${g'}^*F'g'=F'$, where
\begin{displaymath}
F'=\begin{pmatrix}
r+1&-1&0\\
-1&r-1&0\\
0&0&-1
\end{pmatrix},
\end{displaymath}
such that $g'$ has entries in~$\Z[\zeta]$, modulo $Z=\{\zeta^jI:j=0,\ldots,11\}$. 

However, it is convenient to work with a diagonal form instead of~$F'$. Notice that 
$F'=(r-1)^{-1}\gamma_0^*F\gamma_0$ for
\begin{displaymath}
F=\begin{pmatrix}
1&0&0\\
0&1&0\\
0&0&1-r
\end{pmatrix},
\quad\text{and}\quad
\gamma_0=\begin{pmatrix}
1&  0&0\\
1&1-r&0\\
0&  0&1
\end{pmatrix}.
\end{displaymath}
So we instead define $\bar\Gamma$ to be the group of matrices~$g$, modulo~$Z$, with entries 
in~$\ell$, which are unitary with respect to~$F$ for which $g'=\gamma_0^{-1}g\gamma_0$
has entries in~$\Z[\zeta]$. Such $g$'s have entries in $\frac{1}{r-1}\Z[\zeta]\subset\frac{1}{2}\Z[\zeta]$.

Since $F$ is diagonal, it is easy to make the group $\PU(F)$ act on the standard unit two-ball, which we will just denote by $\twoball$:  if $gZ\in\bar\Gamma$, the action of $gZ$ on~$\twoball$ is given by 
\begin{displaymath}
(gZ).(z,w)=(z',w')\quad\text{if}\quad 
DgD^{-1}\begin{pmatrix}z\\w\\1\end{pmatrix}=\lambda\begin{pmatrix}z'\\w'\\1\end{pmatrix},
\end{displaymath}
for some $\lambda\in\C$, where $D$ is the diagonal matrix with diagonal entries 1, 1 and~$\sqrt{r-1}$. 
We will ignore the distinction between matrices $g$ and elements $gZ$ of~$\bar\Gamma$.

Now $\bar\Gamma$ contains a subgroup~$K$ of order 288 generated
by the two matrices $u=\gamma_0u'\gamma_0^{-1}$ and $v=\gamma_0v'\gamma_0^{-1}$
where
\begin{displaymath}
u'=\begin{pmatrix}
\zeta^3+\zeta^2-\zeta&1-\zeta&0\\
\zeta^3+\zeta^2-1&\zeta-\zeta^3&0\\
0&0&1
\end{pmatrix}
\quad\text{and}\quad
v'=\begin{pmatrix}
\zeta^3&0&0\\
\zeta^3+\zeta^2-\zeta-1&1&0\\
0&0&1\\
\end{pmatrix}.
\end{displaymath}
A presentation for~$K$ is given by the relations
$u^3=v^4=1$ and $(uv)^2=(vu)^2$.
The elements of~$K$ are most neatly expressed if we use not only the generators $u$ and~$v$,
but also $j=(uv)^2$, which is the diagonal matrix with diagonal entries $\zeta$, $\zeta$ and~1, 
and generates the center of~$K$.

There is one further generator needed for~$\bar\Gamma$, namely $b=\gamma_0b'\gamma_0^{-1}$
for
\begin{displaymath}
b'=\begin{pmatrix}
                         1&                     0&0\\
-2\zeta^3-\zeta^2+2\zeta+2&\zeta^3+\zeta^2-\zeta-1&-\zeta^3-\zeta^2\\
             \zeta^2+\zeta&             -\zeta^3-1&-\zeta^3+\zeta+1
\end{pmatrix}.
\end{displaymath}

\begin{theo}[\cite{CS2}]\label{thm:barGammapresentation}A presentation of~$\bar\Gamma$ is given by the generators
$u$, $v$ and~$b$ and the relations
\begin{displaymath}
u^3=v^4=b^3=1,\ (uv)^2=(vu)^2,\ v b=bv,\ (buv)^3=(buvu)^2v=1.
\end{displaymath}
\end{theo}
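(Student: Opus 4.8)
The plan is to let $G$ be the abstract group defined by the generators $u,v,b$ and the seven listed relations, and to identify $G$ with $\bar\Gamma$ through the evident map $\phi\colon G\to\bar\Gamma$. First I would check by direct matrix arithmetic over $\frac{1}{r-1}\Z[\zeta]$ that the images $u,v,b$ satisfy every relation modulo the center $Z$: that $u^3,v^4,b^3\in Z$, that $(uv)^2=(vu)^2$ and $vb=bv$, and that $(buv)^3$ and $(buvu)^2v$ lie in $Z$. This is a finite verification, and since $b$ is by construction the single generator adjoined to $K=\langle u,v\rangle$ to obtain all of $\bar\Gamma$, the map $\phi$ is onto. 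A useful preliminary observation is that the relations $u^3=v^4=1$, $(uv)^2=(vu)^2$ already constitute a presentation of $K$; hence the subgroup $\langle u,v\rangle\le G$ is a quotient of $K$, and since it maps onto the order-$288$ group $K\subset\bar\Gamma$ it must be isomorphic to $K$. Thus $G$ is built from a faithful copy of $K$ together with $b$ and its four relations, and the entire problem reduces to showing that $\phi$ is injective, i.e.\ that these relations are complete.

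For injectivity I would invoke Poincar\'e's polyhedron theorem for the action of $\bar\Gamma$ on the ball $\twoball$. Concretely I would construct a fundamental polyhedron $P$---for example a Dirichlet domain centered at a point whose $\bar\Gamma$-stabilizer is trivial, bounded by the bisectors equidistant from that point and its orbit---and determine the side-pairing transformations of $P$. One expects these to be given by $u,v,b$ together with elements of $K$, with $K$ arising as the stabilizer of the center of the model. Poincar\'e's theorem then furnishes a presentation of $\bar\Gamma$ whose defining relations are exactly the ridge-cycle relations of $P$, and the remaining task is to verify that these cycle relations are equivalent to the seven stated ones; I would expect $vb=bv$, $(buv)^3=1$ and $(buvu)^2v=1$ to appear as the cycles along the ridges bordering the side paired by $b$.

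A second, more computational route---closer to the methods of \cite{CS2}---is to pass to the finite-index normal subgroup $N\trianglelefteq G$ that is the $\phi$-preimage of the torsion-free lattice $\Pi$ of index $864$. Because the index is finite, a Reidemeister--Schreier rewriting terminates and produces an explicit presentation of $N$; one then checks that its abelianization has rank $2$ and that its Euler characteristic agrees with that forced by the Euler number $3$ of the Cartwright--Steger surface. This topological data rigidly pins down $N$ as the surface group of $X$, so that $\phi|_N$ is injective; as $\ker\phi\le N$ (indeed $\ker\phi=\phi^{-1}(\{1\})\subseteq\phi^{-1}(\Pi)=N$), this forces $\phi$ itself to be injective, upgrading it to an isomorphism.

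In both approaches the decisive step, and the main obstacle, is completeness of the relations, equivalently injectivity of $\phi$; verifying surjectivity and the validity of each individual relation is routine. Geometrically the difficulty lies in the combinatorics of the fundamental polyhedron: bisectors in $\twoball$ are not totally geodesic, so controlling their intersection pattern and proving that $P$ is genuinely a fundamental domain satisfying the hypotheses of Poincar\'e's theorem requires considerable care. Computationally the difficulty is to guarantee that the coset enumeration defining $N$ terminates and is complete, and that the finite-index invariants determine the presentation of $G$ rather than merely a proper quotient of it.
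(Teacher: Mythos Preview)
The paper does not itself prove this theorem; it is quoted from~\cite{CS2}. Your first strategy---construct a fundamental polyhedron for $\bar\Gamma$ on~$\twoball$ and read off the presentation from Poincar\'e's theorem---is sound and is essentially the method of~\cite{CS2}; your identification of the main obstacle (the non-totally-geodesic bisectors and the delicate verification of the cycle conditions) is accurate.

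Your second strategy, however, has a genuine gap. You propose to pull back $\Pi$ to $N=\phi^{-1}(\Pi)\le G$, compute a presentation of~$N$ via Reidemeister--Schreier, and argue that matching the abelianization and an ``Euler characteristic'' forces $N\cong\Pi$, hence $\phi|_N$ injective. But $\Pi$ is the fundamental group of a closed aspherical \emph{real $4$-manifold} (the Cartwright--Steger surface is a complex surface, not a Riemann surface), and there is no analogue for such groups of the elementary fact that a surjection between closed-surface groups of equal genus is an isomorphism. Concretely, the Euler characteristic of the abstract group~$N$ is not even defined until you know $N$ is of type~$\mathrm{FP}$---which is part of what you are trying to prove---and verifying $H_1(N)\cong\Z^2$, or indeed any finite list of homological invariants computable from the presentation, cannot rule out a nontrivial $\ker\phi$. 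Hopficity of~$\Pi$ (linear, hence residually finite) does not help either, since the surjection in hand is $N\to\Pi$, not $\Pi\to\Pi$. So the sentence ``this topological data rigidly pins down $N$ as the surface group of~$X$'' is where the argument breaks; some geometric input---a fundamental domain or a covolume comparison---is unavoidable, which brings you back to the first approach.
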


\subsection{} Let us record here the connection with a group which was first discovered by Mostow: the group $\bar\Gamma$ is isomorphic to a group generated by complex reflections, denoted by $\Gamma_{3,\frac13}$ in the paper~\cite{Mos1} and by $\Gamma_{3,4}$ in~\cite{Parker}, and whose presentation (see Parker~\cite{Parker}) is
\begin{displaymath}
\Gamma_{3,4}=\langle J,R_1,A_1\ :\ J^3=R_1^3=A_1^4=1,\ A_1=(JR_1^{-1}J)^2,\ A_1R_1=R_1A_1\rangle.
\end{displaymath}
Defining $R_2=JR_1J^{-1}$, it was shown in~\cite[Proposition 4.6]{Parker} that the subgroup
$\langle A_1,R_2\rangle$ of~$\Gamma_{3,4}$ is finite, with order~288 (actually, it is isomorphic to $K$ above). It has the simple presentation
\begin{displaymath}
\langle A_1,R_2\ :\ A_1^4=R_2^3=1,\ A_1R_2A_1R_2=R_2A_1R_2A_1\rangle.
\end{displaymath}
The following result was communicated to us by~John Parker.

\begin{prop}\label{prop:DMiso}There is an isomorphism $\psi:\bar\Gamma\to\Gamma_{3,4}$ such that
\begin{displaymath}
\psi(u)=R_2,\ \psi(v)=A_1,\ \text{and}\ \psi(b)=R_1.
\end{displaymath}
It satisfies $\psi(K)=\langle A_1,R_2\rangle$, and its inverse satisfies
\begin{displaymath}
\psi^{-1}(R_1)=b,\ \psi^{-1}(A_1)=v,\ \psi^{-1}(J)=buv,\quad\text{and}\ \ \psi^{-1}(R_2)=u.
\end{displaymath}
\end{prop}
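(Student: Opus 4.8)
The plan is to treat this as an isomorphism of two explicitly presented groups and to invoke the universal property of group presentations (von Dyck's theorem) in both directions. Since a presentation of $\bar\Gamma$ is given in Theorem~\ref{thm:barGammapresentation} and a presentation of $\Gamma_{3,4}$ is recorded just above the statement, to define a homomorphism $\psi\colon\bar\Gamma\to\Gamma_{3,4}$ it suffices to send the generators $u,v,b$ to $R_2,A_1,R_1$ and to check that these images satisfy every defining relation of $\bar\Gamma$. Dually, to define a candidate inverse $\phi\colon\Gamma_{3,4}\to\bar\Gamma$ I would send $J,R_1,A_1$ to $buv,b,v$ and check the defining relations of $\Gamma_{3,4}$. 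The proposition then reduces to showing that $\psi$ and $\phi$ are mutually inverse, after which the stated values of $\psi$ and of $\psi^{-1}$ are read off.

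The whole computation is carried by two ``collapsing'' word identities that I would record first. In $\Gamma_{3,4}$, using $R_2=JR_1J^{-1}$, $A_1=(JR_1^{-1}J)^2$ and $J^3=1$, a short cancellation gives $R_1R_2A_1=J$, and hence $R_1R_2A_1R_2=JR_2$. In $\bar\Gamma$, the relation $(buvu)^2v=1$ says exactly $(buvu)^2=v^{-1}$, so $buvuv=(buvu)^{-1}$ and the word $buvuv$ has square $v$. With these in hand, checking that $\psi$ is well defined is routine: $R_2^3=1$ follows from $R_1^3=1$ by conjugation; $A_1^4=1$, $R_1^3=1$ and $A_1R_1=R_1A_1$ are relations of $\Gamma_{3,4}$; the relation $(R_2A_1)^2=(A_1R_2)^2$ is precisely the defining relation of the finite group $\langle A_1,R_2\rangle$; and the last two relations become $(R_1R_2A_1)^3=J^3=1$ and $(R_1R_2A_1R_2)^2A_1=(JR_2)^2A_1=1$. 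Checking that $\phi$ is well defined is equally short: $(buv)^3=1$, $b^3=1$, $v^4=1$ and $vb=bv$ are relations of $\bar\Gamma$, while $A_1=(JR_1^{-1}J)^2$ maps, after the free cancellation $b^{-1}b$, to $v=(buvuv)^2$, which is the identity $(buvu)^2=v^{-1}$ rewritten.

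Once both maps are homomorphisms, I would verify $\psi\circ\phi=\mathrm{id}$ on the generators of $\Gamma_{3,4}$: one has $\psi(\phi(R_1))=R_1$ and $\psi(\phi(A_1))=A_1$ at once, while $\psi(\phi(J))=\psi(buv)=R_1R_2A_1=J$ by the first collapsing identity. This already makes $\phi$ injective. For surjectivity I would note that the image of $\phi$ contains $b=\phi(R_1)$, $v=\phi(A_1)$ and $buv=\phi(J)$, hence also $u=b^{-1}(buv)v^{-1}$; since $\bar\Gamma=\langle u,v,b\rangle$, this forces $\phi$ to be onto. Thus $\phi$ is an isomorphism and, from $\psi\circ\phi=\mathrm{id}$, necessarily $\psi=\phi^{-1}$. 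This yields simultaneously that $\psi$ is an isomorphism with the stated values on $u,v,b$ and that $\psi^{-1}=\phi$ takes the listed values on $R_1,A_1,J$ and on $R_2$, the last because $\phi(R_2)=\phi(\psi(u))=u$. Finally $\psi(K)=\langle A_1,R_2\rangle$ is immediate, since $K=\langle u,v\rangle$ and $\psi(u)=R_2$, $\psi(v)=A_1$.

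I expect the obstacle to be bookkeeping rather than anything conceptual: the delicate point is spotting the two cancellation identities $R_1R_2A_1=J$ and $(buvu)^2=v^{-1}$, without which the relations $(buv)^3=1$, $(buvu)^2v=1$ and $A_1=(JR_1^{-1}J)^2$ look unrelated across the two presentations. This is exactly the kind of word manipulation that Magma can confirm mechanically. A pleasant bonus of the surjectivity argument is that it avoids having to derive by hand the hidden braid relation $bub=ubu$ in $\bar\Gamma$, which is what a direct verification of $\phi\circ\psi=\mathrm{id}$ would otherwise require.
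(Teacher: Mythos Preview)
Your proof is correct. The paper does not actually supply a proof of this proposition; it merely states that the result was communicated by John Parker. Your argument via von Dyck's theorem, verifying that the relations of each presentation are satisfied by the images of the generators and then checking that the two maps are mutual inverses, is exactly the natural way to establish such an isomorphism, and your two collapsing identities $R_1R_2A_1=J$ and $(buvuv)^2=v$ are the key computations that make everything transparent. The one place where you lean on outside input is the relation $(R_2A_1)^2=(A_1R_2)^2$ in $\Gamma_{3,4}$, but this is legitimate since the paper explicitly records (citing Parker) that the subgroup $\langle A_1,R_2\rangle$ of $\Gamma_{3,4}$ has precisely that presentation.
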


\subsection{}\label{sec:descorb} It is also convenient to see $\bar\Gamma$ as a (Deligne-)Mostow group: it corresponds to item 8 in the paper of Mostow \cite[p. 102]{Mos2} whose associated weights $(2,2,2,7,11)/12$ satisfy the condition ($\Sigma$INT) in the notation of~\cite{Mos2}. We refer to ~\cite{Mos2} and \cite{DM2} for details on the description below.

The orbifold quotient $\bar\Gamma\backslash\twoball$ is a compactification of the moduli space of $5$-tuples of distinct points $(x_0,x_1,x_2,x_3,x_4)\in(\P^1_\C)^5$ modulo the diagonal action of $\PGL(2,\C)$ and the action of the symmetric group on three letters $\Sigma_3$ on the three first points. The compactification can be described as follows. First, it can be easily seen that the moduli space $Q$ of $5$-tuples of distinct points $(x_0,x_1,x_2,x_3,x_4)\in(\P^1_\C)^5$ modulo the diagonal action of $\PGL(2,\C)$ can be realized as $\P^2_\C$ with a configuration of six lines removed. In homogeneous coordinates $[X_0:X_1:X_2]$ on $\P^2_\C$, these six lines correspond to the three lines of ``type $A$'' with equation $X_i=X_j$ ($1\leqslant i<j\leqslant 2$) and the three lines of ``type $B$'' with equation $X_i=0$ ($i=0,1,2$).
In fact, the compactification $\bar Q=\P^2_\C$ of $Q$ is determined by the fact that we allow two or three of the points $x_0$, $x_1$ and~$x_2$ to coincide ($x_0=x_1$ corresponds to $X_0=X_1$, $x_0=x_2$ to $X_0=X_2$ and $x_1=x_2$ to $X_1=X_2$) and we also allow one or two of the points $x_0$, $x_1$ and~$x_2$ to coincide with $x_3$ ($x_0=x_3$ corresponds to $X_0=0$, $x_1=x_3$ to $X_1=0$ and $x_2=x_3$ to $X_2=0$).

Then, as we mentioned above, the underlying topological space of $\bar\Gamma\backslash\twoball$ is a compactification $R$ of $Q/\Sigma_3$ and actually is the weighted projective plane $\P(1,2,3)\cong\P^2_\C/\Sigma_3$ where the symmetric group on three letters $\Sigma_3$ acts by permutation of the homogeneous coordinates $[X_0:X_1:X_2]$ on $\P^2_\C$. There are two remarkable (irreducible) divisors on $\P(1,2,3)$: one is the image $D_A$ of the divisors of type $A$, the other one is the image $D_B$ of the divisors of type $B$. The divisor $D_A$ has a cusp at the image $P_1$ of the point $[1:1:1]$ and the divisor $D_B$ is smooth. These two divisors meet at two points: once at the image $P_2$ of the points $[1:0:0]$, $[0:1:0]$ or $[0:0:1]$ where they are tangent, once at the image $P_3$ of the points $[1:1:0]$, $[1:0:1]$ or $[0:1:1]$ where the intersection is transverse. 
There are also two singular points on $\P(1,2,3)$: one is a singularity of type $A_1$ and is the image $P_4\in D_B$ of the points $[1:-1:0]$, $[1:0:-1]$ or $[0:1:-1]$, the other one is a singularity of type $A_2$ and is the image $P_5$ of the points $[1:\omega:\omega^2]$ or $[1:\omega^2:\omega]$ where $\omega$ is a primitive 3rd root of unity.

\begin{figure}
\labellist
\small
\pinlabel $P_1$ at 287 125
\pinlabel $P_2$ at 377 132
\pinlabel $P_3$ at 345 73
\pinlabel $P_4$ at 325 35
\pinlabel $P_5$ at 285 60
\pinlabel $D_A$ at 360 200
\pinlabel $D_B$ at 410 185
\endlabellist
\includegraphics[scale=0.7]{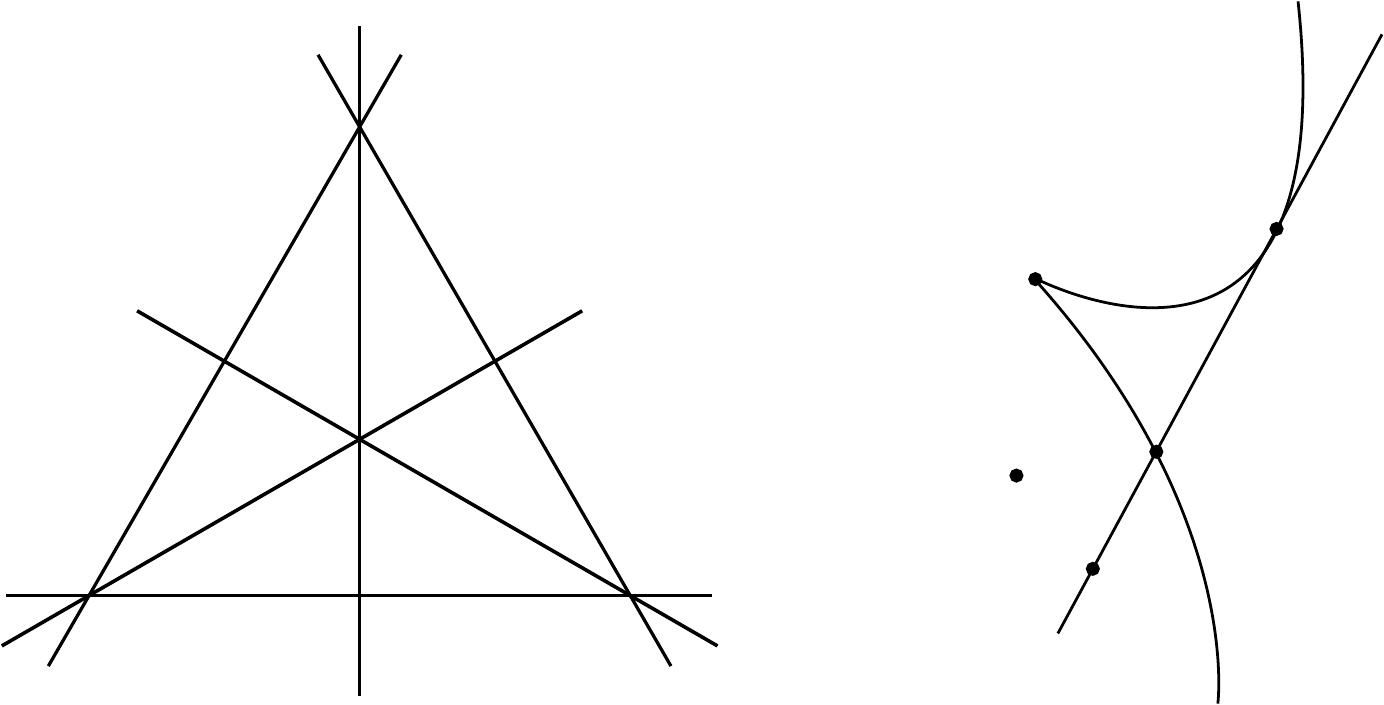}
\caption{$\bar Q=\P^2_\C$ and $R=\P^2_\C/\Sigma_3$}\label{fig:barQR}
\end{figure}

\begin{rem}
In the book~\cite[p. 111]{DM2}, the divisor $D_A$ (resp. $D_B$) is denoted by $D_{AA}$ (resp. $D_{AB}$) and the points $P_1,\dots,P_5$ simply by $1,\dots,5$.
\end{rem}

There is a standard method to compute the weight of the orbifold
divisors on $\bar\Gamma\backslash\twoball$ as well as the local groups
at the orbifold points, according to the weights
$(2,2,2,7,11)/12$. The weight of $D_A$ is $3=2(1-(2+2)/12)^{-1}$ and
the weight of $D_B$ is $4=(1-(2+7)/12)^{-1}$. This means that the
preimage of $D_A$ (resp. $D_B$) in $\twoball$ is a union of mirrors of
complex reflections of order 3 (resp. 4). We will denote by $\cM_A$
(resp. $\cM_B$) the corresponding sets of mirrors and we will refer to
these sets as {\it mirrors\/} of types~$A$ (resp. $B$). Said another
way, the isotropy group at a generic point of some $M\in \cM_A$ is
isomorphic to $\Z_3$ and the isotropy group at a generic point of some
$M\in \cM_B$ is isomorphic to $\Z_4$, both generated by a complex
reflection of the right order. This has to be compared with the
description of $\bar\Gamma$ as~$\Gamma_{3,4}$.

The isotropy group at a point above the transverse intersection $P_3$
of $D_A$ and $D_B$ is naturally isomorphic to $\Z_3\times\Z_4$. As
$P_5$ is a singularity of type $A_2$ but does not belong to any
orbifold divisor, the local group at $P_5$ is isomorphic to
$\Z_3$. But since $P_4\in D_A$ is a singularity of type $A_1$, the
local group at $P_4$ has order $8=2\cdot 4$ and actually is isomorphic
to $\Z_8$ as can be seen using Magma.

It is a little bit more difficult to determine the isotropy group above the points $P_1$ and~$P_2$. It will also be useful to describe the stabilizer in $\oGamma$ of a mirror. For this, one can use a method similar to the one in \cite[Lemma 2.12]{Der1} and obtain the following lemma which already appeared in an unpublished manuscript of Deraux and Yeung.

\begin{lemm}\label{lem:isotropystab}
Let $\cM_A$ (resp. $\cM_B$) denote the set of mirrors of complex
reflections of order~3 (resp. 4) in $\oGamma$. 
  
Let $\cP\subset\twoball$ denote the set of points above $P_1$
and $\cT\subset\twoball$ denote the set of points above~$P_2$.
The following holds.
  \begin{itemize}
\item[(a)]The group $\oGamma$ acts transitively on $\cM_A$, on $\cM_B$,
    on $\cP$ and on $\cT$.
\item[(b)]  For each point $\xi\in\cP$, the
    stabilizer of $\xi$ is the one labelled $\sharp 4$ in the Shephard-Todd list. It is a central extension of a $(2,3,3)$-triangle
    group, with center of order $2$, and has order $24$. There are
    precisely $4$ mirrors in $\cM_A$ through each such $\xi\in\cP$.
  \item[(c)] For each point $\xi\in\cT$, the
    stabilizer of $\xi$ is the one labelled $\sharp 10$ in the Shephard-Todd list. It is a central extension of a $(2,3,4)$-triangle
    group, with center of order $12$, and has order $288$. Through
    each such $\xi\in\cT$, there are $8$ elements of $\cM_A$ and $6$
    elements of $\cM_B$.
  \item[(d)] The stabilizer of any element $M\in\cM_A$ is
    a central extension of a $(2,4,12)$-triangle group, with center of
    order $3$.
  \item[(e)] The stabilizer of any element $M\in\cM_B$ is
    a central extension of a $(2,3,12)$-triangle group, with center of
    order $4$.
    \end{itemize}
\end{lemm}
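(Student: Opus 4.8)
The plan is to realise each of the five stabilizers explicitly as a subgroup of $\oGamma$ generated by complex reflections, and then to identify it against the Shephard--Todd list and against the local orbifold data of Section~\ref{sec:descorb}. I would work with the presentation of Theorem~\ref{thm:barGammapresentation} and the reflection generators $R_1,R_2,R_3$ (order~$3$, type~$A$) and $A_1$ together with its conjugates (order~$4$, type~$B$) supplied by Proposition~\ref{prop:DMiso}, carrying out all the finite computations in Magma.

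Part~(a) I would treat first. The sets $\cP$ and $\cT$ are by construction the full preimages in $\twoball$ of the single orbifold points $P_1$ and $P_2$ of $\oGamma\backslash\twoball$, so $\oGamma$ acts transitively on each simply because the fibre of a quotient map over one point is a single orbit. For $\cM_A$ and $\cM_B$ the matter is genuinely group-theoretic: although $D_A$ and $D_B$ are irreducible, each individual mirror already surjects onto the whole of $D_A$ (resp.\ $D_B$), so transitivity does \emph{not} follow from irreducibility alone. Instead I would show that the order-$3$ (resp.\ order-$4$) reflections of $\oGamma$ form a single class under conjugation and inversion: $J$ conjugates $R_1$ to $R_2$ and to $R_3$, and a Magma computation with the presentation confirms that every order-$3$ reflection is conjugate to a power of $R_1$ and every order-$4$ one to a power of $A_1$. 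Since conjugate reflections have $\oGamma$-equivalent mirrors, this yields the two single orbits $\cM_A$ and $\cM_B$.

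For the point stabilizers (b) and~(c) I would fix a representative $\xi$ of $P_1$ (resp.\ $P_2$), compute which reflection matrices fix the homogeneous coordinate vector of $\xi$ --- these are exactly the reflections whose mirror passes through $\xi$ --- and let $G_\xi$ be the group they generate. Linearising the action at the fixed point $\xi$ presents $G_\xi$ as a finite subgroup of $U(2)$, that is, a finite complex reflection group, whose order, number of mirrors, and defining relations can be read off from the matrices. For $P_1$ I expect $4$ mirrors of type~$A$ and the group $\sharp4$ of order~$24$, generated by two order-$3$ reflections with the braid relation $RSR=SRS$, namely the binary tetrahedral group $SL(2,3)$, a central extension of the $(2,3,3)$-group $A_4$ by its centre $\Z_2$. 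For $P_2$ I expect $8$ mirrors of type~$A$ and $6$ of type~$B$, generating the group $\sharp10$ of order~$288$, a central extension of the $(2,3,4)$-group $S_4$ by a centre of order~$12$. The delicate point is \emph{completeness}: one must be certain of having found every mirror through $\xi$ and that $G_\xi$ is the full stabilizer, which I would confirm by matching $|G_\xi|$ with the isotropy orders already pinned down in Section~\ref{sec:descorb}.

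For the mirror stabilizers (d) and~(e) the structure is forced by the geometry of a complex geodesic. Fix $M\in\cM_A$, the mirror of an order-$3$ reflection $R$. Its pointwise stabilizer is exactly $\langle R\rangle\cong\Z_3$, and this is central in the setwise stabilizer $\mathrm{Stab}(M)$, since any $g\in\mathrm{Stab}(M)$ conjugates $R$ to an order-$3$ reflection with the same mirror $g(M)=M$ and the same eigenvalue, hence back to $R$. Therefore $\mathrm{Stab}(M)/\langle R\rangle$ acts faithfully on $M\cong B^1_\C$ as a Fuchsian group whose quotient orbifold is $D_A$. As $D_A$ is a sphere carrying exactly the three distinguished points $P_1,P_2,P_3$, this Fuchsian group is a triangle group, and I would read off its signature from the local groups: at the transverse point $P_3$ the local group is $\Z_3\times\Z_4$ with the central $\Z_3$ equal to $\langle R\rangle$, giving cone order~$4$; the point $P_2$ gives cone order~$12$ and $P_1$ gives~$2$, yielding the $(2,4,12)$-group and proving~(d). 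The case $M\in\cM_B$ is identical: the centre is $\langle A\rangle\cong\Z_4$, the three points are $P_2,P_3,P_4$, with cone orders $12$, $3$ (from the $\Z_3$ factor of $\Z_3\times\Z_4$ at $P_3$) and $2$ (from the $A_1$-local group $\Z_8$ at $P_4$, whose quotient by the central $\Z_4$ is $\Z_2$), giving the $(2,3,12)$-group and proving~(e). The principal obstacle throughout is the combinatorial bookkeeping of mirror counts and cone orders --- guaranteeing that no mirror through $\xi$ is missed in (b)--(c) and that the induced cone orders match the signatures $(2,4,12)$ and $(2,3,12)$ --- and my main safeguard is the consistency requirement at $P_2$, which lies on both divisors and must receive cone order~$12$ from each side, a condition that both pins down the remaining orders and is compatible with the order-$288$ stabilizer $\sharp10$ found at $P_2$ in part~(c).
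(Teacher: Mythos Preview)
Your approach is broadly sound but takes a genuinely different route from the paper's. The paper works entirely inside the Deligne--Mostow weight calculus: from the weights $(2,2,2,7,11)/12$ it reads off the projectivised isotropy at $P_1$ and $P_2$ as spherical triangle groups via the formula $(1-(\mu_i+\mu_j))^{-1}$, obtains the order of the centre from $(1-\sum\mu_i)^{-1}$, and matches against the Shephard--Todd tables. For~(d) and~(e) it observes that the stabilizer of a mirror, modulo its centre, is itself a one-dimensional Deligne--Mostow group obtained by \emph{collapsing} two of the five marked points (e.g.\ weights $(2,4,7,11)/12$ for type~$A$), whose triangle signature again drops out of the weight formula. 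This is short and needs no machine computation. Your route --- explicit matrices and Magma for~(b),~(c), and the Fuchsian-group-on-the-mirror picture for~(d),~(e) --- is more self-contained for a reader unfamiliar with the $\Sigma\mathrm{INT}$ machinery, and your argument that $\langle R\rangle$ is central in $\mathrm{Stab}(M)$ with quotient a Fuchsian group on the image curve is a clean explanation of \emph{why} a triangle group appears.

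There is, however, a circularity you must repair. For~(b) and~(c) you propose to check that the reflection-generated subgroup $G_\xi$ is the \emph{full} stabilizer by ``matching $|G_\xi|$ with the isotropy orders already pinned down in Section~\ref{sec:descorb}''. But the text immediately before this lemma says explicitly that the isotropy at $P_1$ and $P_2$ is \emph{not} yet determined; those orders are precisely what~(b) and~(c) assert. You need an independent argument --- for $P_2$, the cleanest is that any $g\in\bar\Gamma$ fixing the origin~$O$ is, modulo scalars, block-diagonal of the form $\left(\begin{smallmatrix}A&0\\0&1\end{smallmatrix}\right)$ with $A$ unitary and subject to the integrality constraint, and one enumerates such matrices directly (recovering~$K$); for $P_1$ a similar linear-algebra bound, or the classification of finite subgroups of $\U(2)$ once $G_\xi$ is known, suffices. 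Two smaller points: in~(d),~(e) the Fuchsian quotient is the \emph{normalisation} of $D_A$ (resp.\ $D_B$), since $D_A$ is singular at~$P_1$; and the cone orders you quote at $P_1$ and~$P_2$ tacitly use that $\mathrm{Stab}(\xi)$ acts transitively on the mirrors of a given type through~$\xi$, a property of the groups $\sharp4$ and~$\sharp10$ that you should make explicit.
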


\ni{\it Sketch of proof.} (a) Follows from the above discussion.

(b) The point $P_1$ corresponds to $x_0=x_1=x_2$ so that the computation $3/2=(1-(2+2)/12)^{-1}$ shows that the spherical triangle group associated to the projective action of the isotropy group at $\xi\in\cP$ is $(2,3,3)$. Indeed, we have to consider the triangle with angles $(2\pi/3,2\pi/3,2\pi/3)$ and take the symmetry into account (i.e. dividing the triangle into six parts), so that we obtain a triangle with angles $(\pi/2,\pi/3,\pi/3)$. The center has order given by $2=(1-(2+2+2)/12)^{-1}$. Comparing with \cite[Table 1]{ST}, we see that the relevant group is the one labelled $\sharp 4$ in the Shephard-Todd list and the rest of the assertion follows.

(c) Similarly, the point $P_2$ corresponds for instance to $x_0=x_1=x_3$ and the additional computation $4=(1-(2+7)/12)^{-1}$ shows that the spherical triangle group associated to the projective action of the isotropy group at $\xi\in\cT$ is $(2,3,4)$. Indeed, we have to consider the triangle with angles $(\pi/4,\pi/4,2\pi/3)$ and take the symmetry into account (i.e. dividing the triangle into two parts), so that we obtain a triangle with angles $(\pi/2,\pi/3,\pi/4)$. The center has order given by $12=(1-(2+2+7)/12)^{-1}$. Comparing with \cite[Table 2]{ST}, we see that the relevant group is the one labelled $\sharp 10$ in the Shephard-Todd list.

(d) Follows from the interpretation of the stabilizer of $M\in\cM_A$ as a central extension with center of order 3 (corresponding to the order of the reflection with mirror $M$) of a Deligne-Mostow group with weights $(2,4,7,11)/12$ coming for instance from the collapsing of $x_1$ and $x_2$. The associated triangle group is $(2,4,12)$ since $2=(1-(2+4)/12)^{-1}$, $4=(1-(2+7)/12)^{-1}$ and $12=(1-(4+7)/12)^{-1}$.

(e) Similarly, the stabilizer of $M\in\cM_B$ is a central extension with center of order 4 (corresponding to the order of the reflection with mirror $M$) of a (Deligne-)Mostow group with weights $(2,2,9,11)/12$ coming for instance from the collapsing of $x_2$ and $x_3$. We have moreover to take care of the symmetry coming from the first two weights. The associated triangle group is $(2,3,12)$ since $3/2=(1-(2+2)/12)^{-1}$ and $12=(1-(2+9)/12)^{-1}$ so that we have to divide into two parts a triangle with angles $(2\pi/3,\pi/12,\pi/12)$.
\qed

\subsection{} We come back to the description of $\twoball$ and $\bar\Gamma$ in the more concrete terms of \S\ref{sec:matrix}.
The elements $u$ and~$v$ of~$\bar\Gamma$ are complex reflections of order~3 and~4, respectively. 
For $\alpha\in\C$, define
\begin{displaymath}
M_\alpha=\{(z,w)\in\twoball:z=\alpha w\}.
\end{displaymath}
We also let $M_\infty=\{(z,w)\in\twoball:w=0\}$. Setting $c=(r-1)(\zeta^3-1)/2=\zeta^2-\zeta$,
one can check that $u$ fixes each point of~$M_c$, and $v$ fixes each point of~$M_0$.
As a consequence of Lemma~\ref{lem:isotropystab}(a), $\cM_A=\{g(M_c):g\in\bar\Gamma\}$ and $\cM_B=\{g(M_0):g\in\bar\Gamma\}$. Of course, $g(M_c)$ and~$g(M_0)$ are the
sets of points of~$\twoball$ fixed by the complex reflection $gu g^{-1}$, and~$gv g^{-1}$, respectively. For $\xi\in\twoball$, let $\cM_A(\xi)$, respectively $\cM_B(\xi)$ denote the set of distinct mirrors~$M$,
of type~$A$ and~$B$, respectively, containing~$\xi$. 

\begin{prop}\label{prop:torsionelts}The non-trivial elements of finite order in~$\bar\Gamma$ are all conjugate
to one of the elements in the following table, or the inverse of one of these.
\begin{center}
\begin{tabular}[t]{|c|c|}\hline

\vbox to 2.5ex{}$d$&Representatives of elements of order $d$\\[0.25ex]\hline
\vbox to 2.5ex{}$2$&$v^2$, $j^6,\ (bu^{-1})^2$\\[0.25ex]\hline  
\vbox to 2.5ex{}$3$&$u$, $j^4$, $u j^4$, $buv$\\[0.25ex]\hline  
\vbox to 2.5ex{}$4$&$v$, $j^3$, $v j^3$, $v^2j^3$, $bu^{-1}$\\[0.25ex]\hline 
\vbox to 2.5ex{}$6$&$j^2$, $v^2j^2$, $v^2u j$, $v^2u j^5$, $bv^2u^{-1}j$, $bv^2$\\[0.25ex]\hline
\vbox to 2.5ex{}$8$&$uv j$, $\zeta^{-1}bj$, $(\zeta^{-1}bj)^3$\\[0.25ex]\hline
\vbox to 2.5ex{}$12$&$j$, $j^5$, $uv^{-1}j^2$, $uv^{-1}j^3$, $uv^{-1}j^6$, $uv^{-1}j^{-1}$, $v^2j$, $uv^2$, $u j$, $u j^3$, $bv$, $(bv)^{-5}$\\[0.25ex]\hline
\vbox to 2.5ex{}$24$&$uv$, $vu j^2$\\[0.25ex]\hline
\end{tabular}
\end{center}
\end{prop}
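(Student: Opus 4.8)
The plan is entirely geometric, reducing the classification to conjugacy inside finitely many finite subgroups. The starting observation is that a nontrivial element $g$ of finite order in $\oGamma$ is elliptic, hence fixes a point of $\twoball$: the cyclic group $\langle g\rangle$ is finite, so the circumcenter of any $\langle g\rangle$-orbit is a well-defined fixed point. The fixed-point set of $g$ in $\twoball$ is a totally geodesic subball, so it is either a complex geodesic --- in which case $g$ is a complex reflection --- or a single point. I would first dispose of the reflections. By the orbifold description of \S\ref{sec:descorb} the only mirrors are those of type $A$ (reflections of order~$3$) and of type~$B$ (reflections of order~$4$, whose squares have order~$2$). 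Using transitivity of $\oGamma$ on $\cM_A$ and on $\cM_B$ (Lemma~\ref{lem:isotropystab}(a)) together with the fact that the pointwise stabilizer of a mirror is cyclic of order~$3$, respectively~$4$, every reflection is conjugate, up to inverse, to exactly one of $u$, $v$ or $v^2$; these are the three reflection entries of the table.

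The remaining torsion elements have an \emph{isolated} (hence unique) fixed point $\xi\in\twoball$. This uniqueness is the key simplification: if $\gamma g\gamma^{-1}=h$ with $g,h$ both of this type and fixing $\xi,\eta$ respectively, then $h$ fixes $\gamma\xi$, so $\gamma\xi=\eta$; thus $\oGamma$-conjugacy of such elements is equivalent to their fixed points lying in one $\oGamma$-orbit \emph{and} conjugacy inside the finite stabilizer $\oGamma_\xi$. Since a non-reflection cannot belong to the cyclic reflection group of a single mirror, $\oGamma_\xi$ is strictly larger than such a group, so $\xi$ projects to one of the special orbifold points. By \S\ref{sec:descorb} and Lemma~\ref{lem:isotropystab} these are exactly $P_1,\dots,P_5$, each the image of a single $\oGamma$-orbit, with stabilizers $G_1\cong\SL(2,3)$ (order~$24$, Shephard--Todd $\sharp 4$), $G_2=K$ (order~$288$, $\sharp 10$), $G_3\cong\Z_3\times\Z_4$ (realized by the commuting reflections $b$ and~$v$), $G_4\cong\Z_8$ (its order-$4$ subgroup generated by a type-$B$ reflection), and $G_5=\langle buv\rangle\cong\Z_3$. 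In particular there is no fusion between classes attached to different~$P_i$, and none between reflections and non-reflections, the dimension of the fixed locus being a conjugacy invariant.

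It then remains to enumerate, inside each $G_i$, the conjugacy classes of the non-reflection elements up to inversion and to write explicit words in $u,v,b,j=(uv)^2$. For the cyclic groups this is immediate: $G_5$ contributes the single order-$3$ class $buv$; $G_4$ contributes the two order-$8$ classes $\zeta^{-1}bj$ and $(\zeta^{-1}bj)^3$ (its order-$4$ power being a type-$B$ reflection already counted); and the non-reflection elements $b^iv^j$ ($i,j\neq0$) of $G_3$ give the order-$12$ classes $bv$ and $(bv)^{-5}=bv^3$ and the order-$6$ class $bv^2$. For $G_1\cong\SL(2,3)$ the conjugacy classes are standard, and its eight order-$3$ elements are precisely the type-$A$ reflections through the fixed point (there being $4$ such mirrors, matching the $4$ Sylow $3$-subgroups); discarding them leaves the central involution, the order-$4$ class and one order-$6$ inverse-pair, with representatives $(bu^{-1})^2$, $bu^{-1}$ and $bv^2u^{-1}j$.

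The substantial computation, and the main obstacle, is the group $K$ of order~$288$. Here I would compute the conjugacy classes in Magma, separate out the type-$A$ (order~$3$) and type-$B$ (orders~$4$ and~$2$) reflection classes --- noting that, e.g., $j^4$ is \emph{not} a reflection since its repeated eigenspace is the polar line, disjoint from $\twoball$ --- and read off representatives of the remaining classes fixing the point over~$P_2$: one of order~$2$ ($j^6$), two of order~$3$ ($j^4,\,uj^4$), three of order~$4$ ($j^3,\,vj^3,\,v^2j^3$), four of order~$6$ ($j^2,\,v^2j^2,\,v^2uj,\,v^2uj^5$), one of order~$8$ ($uvj$), the ten of order~$12$ ($j,\,j^5,\,uv^{-1}j^2,\,uv^{-1}j^3,\,uv^{-1}j^6,\,uv^{-1}j^{-1},\,v^2j,\,uv^2,\,uj,\,uj^3$), and two of order~$24$ ($uv,\,vuj^2$). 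Checking the orders and removing inverse duplicates then assembles exactly the table. The delicate points to verify are that $P_1,\dots,P_5$ genuinely exhaust the fixed loci of non-reflection torsion, and the bookkeeping inside~$K$ that distinguishes reflections from isolated-fixed-point elements and eliminates inverse pairs.
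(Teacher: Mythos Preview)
Your argument is correct and complete, but it is a genuinely different route from the paper's. The paper does not use the orbifold stratification at all: it quotes \cite[Lemma~3.3]{CS2} to the effect that every torsion element of~$\oGamma$ is conjugate into the finite set $K\cup bK\cup bu^{-1}bK$, lists the 408 nontrivial finite-order elements in that set, and then sorts them into conjugacy classes by direct matrix computation. Your approach instead separates reflections (handled via transitivity on $\cM_A$ and~$\cM_B$) from elements with an isolated fixed point, observes that for the latter $\oGamma$-conjugacy is exactly $G_i$-conjugacy because the unique fixed point is a conjugacy invariant, and then enumerates inside the five finite stabilizers $G_1,\dots,G_5$.

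What each buys: your decomposition explains the structure of the table---every row is visibly a reflection class or is tagged by one of the points $P_1,\dots,P_5$---and avoids importing the external Lemma~3.3 of~\cite{CS2}. It also makes the ``no fusion'' statement transparent. On the other hand, the paper's approach is more uniform (one finite list, one pass) and sidesteps the case analysis of the five stabilizers; in particular it does not need the identification of explicit representatives in $G_1$, $G_3$, $G_4$ (you assert, e.g., that $bv^2u^{-1}j\in G_1$ and that $\zeta^{-1}bj$ generates $G_4$, which are true but require separate verification). Both methods ultimately rely on a machine computation inside~$K$; yours is the lighter one (conjugacy classes of a group of order~288, then discarding the $16+18$ reflection elements), while the paper's is a brute-force scan of $864$ words.
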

\begin{proof}
Elements of $\bar\Gamma$ which fix points of~$\twoball$ must have finite order, 
because $\bar\Gamma$ acts discontinuously on~$\twoball$.
Conversely (see \cite[Lemma~3.3]{CS2}) any element of finite order in~$\bar\Gamma$ fixes at least one point of~$\twoball$,
and is conjugate to an element of~$K\cup bK\cup bu^{-1}bK$.
One can easily list the nontrivial elements of finite order in this last set (there 
are 408 of them, 76 in~$bK$ and 45 in~$bu^{-1}bK$), all having order dividing~24. 
Routine calculations show that any such element (and hence
each nontrivial element of finite order in $\bar\Gamma$) has a matrix
representative~$g$ conjugate to one of the elements in the above
table, or its inverse.
\end{proof}

For $\alpha\in\C\cup\{\infty\}$ and for $\xi\in\twoball$, let
\begin{displaymath}
\bar\Gamma_\alpha=\{g\in\bar\Gamma:g(M_\alpha)=M_\alpha\}\quad\text{and}\quad\bar\Gamma_\xi=\{g\in\bar\Gamma:g.\xi=\xi\}
\end{displaymath}
denote the stabilizer of~$M_\alpha$ and~$\xi$, respectively. In \S\ref{sec:descorb}, we described the $\xi\in\twoball$ for which $\bar\Gamma_\xi\ne\{1\}$. The result can be summed up as follows:
\begin{center}
\begin{tabular}[t]{|c|c|c|c|}\hline

\vbox to 2.5ex{}$\bar\Gamma(\xi)$&$|\bar\Gamma_\xi|$&$|\cM_A(\xi)|$&$|\cM_B(\xi)|$\\[0.25ex]\hline
\vbox to 2.5ex{}$P_1$&$24$&$4$&$0$\\[0.25ex]\hline
\vbox to 2.5ex{}$P_2$&$288$&$8$&$6$\\[0.25ex]\hline
\vbox to 2.5ex{}$P_3$&$12$&$1$&$1$\\[0.25ex]\hline
\vbox to 2.5ex{}$P_4$&$8$&$0$&$1$\\[0.25ex]\hline
\vbox to 2.5ex{}$P_5$&$3$&$0$&$0$\\[0.25ex]\hline
\vbox to 2.5ex{}\hbox{\rm generic} $D_A$&$3$&$1$&$0$\\[0.25ex]\hline
\vbox to 2.5ex{}\hbox{\rm generic} $D_B$&$4$&$0$&$1$\\[0.25ex]\hline
\end{tabular}
\end{center}
where {\it generic} $D_A$ (resp. $D_B$) means that $\bar\Gamma(\xi)\in D_A$ (resp. $D_B$) and $\bar\Gamma(\xi)\ne P_1,P_2,P_3$ (resp. $P_2,P_3,P_4$). Two points of $\twoball$ are particularly important: the origin~$O$, such that $\bar\Gamma(O)=P_2$ (i.e. $O\in\cT$), and 
\begin{equation}\label{eq:Pdefn}
P=\Bigl(\frac{c(\zeta-1)}{\sqrt{r-1}},\frac{\zeta-1}{\sqrt{r-1}}\Bigr),
\end{equation}
such that $\bar\Gamma(P)=P_1$ (i.e. $P\in\cP$). In fact, $\bar\Gamma_O=K$ and, as the table above shows, $\bar\Gamma_P$ has cardinality~24 . Another important point will be the fixed point 
\begin{equation}\label{eq:xi3xi8xi12}
Q=\bigl(\frac{c_1}{\sqrt{r-1}},\frac{c_2}{\sqrt{r-1}}\bigr)
\end{equation}
of $buv$ such that $\bar\Gamma(Q)=P_5$ where for $\lambda=e^{-\pi i/18}$,
\begin{displaymath}
c_1=\zeta^3-\zeta^2-\zeta+1+(\zeta^2-\zeta+1)\lambda+(-\zeta^3+\zeta^2-1)\lambda^2,\quad\text{and}\quad c_2=\zeta^3-(\zeta-1)\lambda^2.
\end{displaymath}

The following lemma adds further detail to Lemma~\ref{lem:isotropystab}(c) and is easily checked.

\begin{lemm}\label{lem:Korbitc}The orbit under the finite group~$K$ of $M_c$ consists of
the eight mirrors $M_\alpha$ for $\alpha=c_{\scriptscriptstyle{\pm\pm\pm}}=\pm(r\pm1)(i\pm1)/2$ (so that
for example $c=c_{\scriptscriptstyle{+--}}$), and $\cM_A(O)$ is the set of these~$M_\alpha$'s. The 8~elements $k_\alpha\in K$
in the table below are such that $k_\alpha(M_c)=M_\alpha$.
\begin{center}
\begin{tabular}[t]{|c|c|c|c|c|c|c|c|c|}\hline
\vbox to 2.5ex{}$\alpha$&$c_{\scriptscriptstyle{+--}}$ & $c_{\scriptscriptstyle{--+}}$ & $c_{\scriptscriptstyle{---}}$&$c_{\scriptscriptstyle{+-+}}$&$c_{\scriptscriptstyle{-++}}$ & $c_{\scriptscriptstyle{-+-}}$ & $c_{\scriptscriptstyle{+++}}$ &$c_{\scriptscriptstyle{++-}}$\\[0.25ex]\hline
\vbox to 2.5ex{}$k_\alpha$&$1$ & $v$&$v^2$&$v^3$&$u^{-1}v^2u$&$vu^{-1}v^2u$& $v^2u^{-1}v^2u$&$v^3u^{-1}v^2u$\\[0.25ex]\hline
\end{tabular}
\end{center}
The orbit under~$K$ of $M_0$ consists of the 6 mirrors $M_\alpha$, $\alpha\in\{0,1,-1,i,-i,\infty\}$, and $\cM_B(O)$ is 
the set of these~$M_\alpha$'s. The 6~elements
$k_\alpha\in K$ in the table below satisfy $k_\alpha(M_0)=M_\alpha$.
\begin{center}
\begin{tabular}[t]{|c|c|c|c|c|c|c|}\hline
\vbox to 2.5ex{}$\alpha$&$0$&$i$&$-1$&$-i$&$1$&$\infty$ \\[0.25ex]\hline
\vbox to 2.5ex{}$k_\alpha$&$1$&$u j$&$vu j$&$v^2u j$&$v^3u j$&$u^{-1}v^2u j^6$\\[0.25ex]\hline
\end{tabular}
\end{center}
\end{lemm}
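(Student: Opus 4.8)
\emph{Approach.} The key observation is that every mirror $M_\alpha$, $\alpha\in\C\cup\{\infty\}$, passes through the origin $O=(0,0)$, and conversely every complex geodesic through $O$ is one of the $M_\alpha$; thus these geodesics are parametrised by $\P^1$. Since $K=\bar\Gamma_O$ fixes $O$, each $g\in K$ permutes them. Concretely, for $g\in K$ the matrix $DgD^{-1}$ fixes $(0,0,1)$ projectively, hence is block lower-triangular,
\begin{displaymath}
DgD^{-1}=\begin{pmatrix} A_g & 0\\ \ast & d\end{pmatrix},\qquad A_g\in\GL(2,\C),
\end{displaymath}
and a direct computation with a point $(\alpha w,w,1)$ of $M_\alpha$ shows $g(M_\alpha)=M_{\alpha'}$, where $\alpha'$ is the image of $\alpha$ under the M\"obius transformation induced by $A_g$. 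The assignment $g\mapsto A_g$ is thus a homomorphism from $K$ into $\PGL(2,\C)$, and the whole lemma becomes a computation of orbits on $\P^1$.

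First I would compute $A_u$ and $A_v$ from $u=\gamma_0u'\gamma_0^{-1}$ and $v=\gamma_0v'\gamma_0^{-1}$ (tracking the conjugations by $\gamma_0$ and $D$ and the projective normalisation). One finds that $\bar v$ acts by $\alpha\mapsto i\alpha$, i.e. it fixes $\alpha=0$ and $\alpha=\infty$ and multiplies the transverse direction by a primitive $4$th root of unity, consistent with $v$ being an order-$4$ reflection with mirror $M_0$; while $\bar u$ is the order-$3$ M\"obius transformation fixing $\alpha=c$. Since $j=(uv)^2=\mathrm{diag}(\zeta,\zeta,1)$ has scalar linear part at $O$, it lies in the kernel, and one checks the image of $K$ is the octahedral group of order $24$ on $\P^1$, for which $\{0,\pm1,\pm i,\infty\}$ are the six vertices and the $c_{\pm\pm\pm}$ the eight face-centres of an octahedron — a convenient consistency check on the orbit sizes $6$ and $8$.

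Next, because $M_c$ is a type-$A$ mirror (fixed pointwise by $u$) through $O$ and $K$ preserves both the point $O$ and the reflection order, the $K$-orbit of $M_c$ lies in $\cM_A(O)$; likewise the $K$-orbit of the type-$B$ mirror $M_0$ lies in $\cM_B(O)$. I would then trace these orbits by iterating $\bar u$ and $\bar v$ from $\alpha=c$ and from $\alpha=0$, verifying that the first closes up on the eight distinct values $c_{\pm\pm\pm}=\pm(r\pm1)(i\pm1)/2$ and the second on $\{0,1,-1,i,-i,\infty\}$. By Lemma~\ref{lem:isotropystab}(c) one has $|\cM_A(O)|=8$ and $|\cM_B(O)|=6$; since the computed orbits have exactly $8$ and $6$ elements and are contained in these sets, they must coincide with them. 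This simultaneously shows $K$ acts transitively on $\cM_A(O)$ and on $\cM_B(O)$ and identifies both sets. Finally, for the two tables I would apply the M\"obius transformation attached to each listed $k_\alpha$ and check that it sends $c\mapsto\alpha$ (type $A$) or $0\mapsto\alpha$ (type $B$).

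The only genuine work lies in the last two steps: extracting $A_u,A_v$ correctly through the double conjugation and projective normalisation, and then matching the explicit orbit values and coset representatives with the correct signs. All of this is elementary and finite, so the single idea that makes the statement ``easily checked'' is the reduction to a M\"obius action of $K$ on the parameter $\alpha\in\P^1$.
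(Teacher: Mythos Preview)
Your approach is correct and is precisely the kind of direct verification the paper has in mind (the paper gives no proof beyond the phrase ``easily checked''). The reduction to a M\"obius action of $K$ on the parameter $\alpha\in\P^1$ is the natural way to organise the computation, and your identification of the image as the octahedral group, with the $c_{\pm\pm\pm}$ as face centres and $\{0,\pm1,\pm i,\infty\}$ as vertices, is a nice conceptual bonus the paper does not make explicit.

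One small remark: for $g\in K$ the matrix $DgD^{-1}$ is in fact block \emph{diagonal}, not merely block lower-triangular, since $u'$, $v'$ and~$\gamma_0$ all have this shape; this slightly simplifies the extraction of $A_g$ but does not affect your argument.
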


\subsection{}\label{sec:Pi} Cartwright and Steger discovered a very interesting torsion-free subgroup~$\Pi$ of $\bar\Gamma$ with finite index. 
The surface $\Pi\backslash\twoball$ is called the Cartwright-Steger surface in this article.

\begin{theo}[\cite{CS2}]\label{thm:pigp}
The elements 
\begin{displaymath}
a_1=vuv^{-1}j^4buv j^2,\quad
a_2=v^2u buv^{-1}uv^2j\quad\text{and}\quad
a_3=u^{-1}v^2u j^9bv^{-1}uv^{-1}j^8
\end{displaymath}
of~$\bar\Gamma$ generate a torsion-free subgroup~$\Pi$ of index~864, with $\Pi/[\Pi,\Pi]\cong\Z^2$.
\end{theo}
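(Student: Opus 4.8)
The plan is to treat this as a finite computation inside the finitely presented group $\bar\Gamma$, whose presentation is recorded in Theorem~\ref{thm:barGammapresentation} (recall that $j=(uv)^2$, so $a_1,a_2,a_3$ are genuine words in the generators $u,v,b$). First I would run a Todd--Coxeter coset enumeration on $\bar\Gamma$ relative to the subgroup $\Pi=\langle a_1,a_2,a_3\rangle$. Provided the enumeration closes, it produces a complete coset table, and hence the transitive permutation representation $\rho:\bar\Gamma\to S_n$ on the set $\Omega=\bar\Gamma/\Pi$ of cosets, together with the exact value $n=[\bar\Gamma:\Pi]$; part of the assertion is precisely that this number equals $864$. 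With the small presentation above and a moderate index, the enumeration is routine for a package such as Magma.

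For torsion-freeness I would exploit Proposition~\ref{prop:torsionelts}. In the permutation action $\rho$, the point stabilizers are exactly the conjugates of $\Pi$; hence a nontrivial element $g$ of finite order has a conjugate lying in $\Pi$ if and only if $\rho(g)$ fixes some point of $\Omega$. Since having a fixed point in $\Omega$ is invariant under conjugation and under passing to the inverse, Proposition~\ref{prop:torsionelts} reduces the verification to the finitely many explicit representatives listed there: $\Pi$ is torsion-free precisely when $\rho(g)$ is fixed-point-free for every $g$ in that table. I would therefore compute the permutation $\rho(g)\in S_{864}$ for each of those representatives and check that none of them fixes a coset.

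For the abelianization I would apply the Reidemeister--Schreier rewriting procedure to the presentation of $\bar\Gamma$ and the coset table from the first step, obtaining a finite presentation of $\Pi$ itself. Abelianizing this presentation yields an integer relation matrix whose Smith normal form I would compute; the claim $\Pi/[\Pi,\Pi]\cong\Z^2$ is the statement that this Smith normal form has exactly two vanishing invariant factors and no nontrivial ones, equivalently that the matrix has the expected rank and trivial torsion part.

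The genuinely delicate step is not any single calculation but the organizational one of ensuring that each computational output constitutes a proof rather than numerical evidence: the coset enumeration must be certified to have closed, so that the index is known to be exactly $864$ and not merely bounded above by it, and the torsion-free check must be underwritten by the conjugacy-and-inverse classification of Proposition~\ref{prop:torsionelts}, so that inspecting a finite list of representatives really does exclude all torsion. Everything else---evaluating the words $a_i$, forming the permutation images, Reidemeister--Schreier, and the Smith normal form---is mechanical, and I expect the main practical difficulty to be the size of the Reidemeister--Schreier presentation before simplification.
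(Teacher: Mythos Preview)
Your proposal is correct and follows essentially the same computational strategy as the paper: Magma's {\tt Index} command (which is Todd--Coxeter under the hood) for the index, Proposition~\ref{prop:torsionelts} reduced to a finite check on coset representatives for torsion-freeness, and {\tt AbelianQuotientInvariants} (which is Reidemeister--Schreier plus Smith normal form) for the abelianization. The only cosmetic difference is that the paper names the explicit coset representatives $b^\mu k$ and checks $b^\mu k\,t\,(b^\mu k)^{-1}\notin\Pi$ directly, whereas you phrase the same check as fixed-point-freeness of $\rho(t)$ in the permutation action on $\Omega$; these are the same verification.
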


\begin{proof}Using the given presentation of~$\bar\Gamma$, the Magma {\tt Index} command
shows that $\Pi$ has index~864 in~$\bar\Gamma$. We see that $\Pi$ is torsion-free as follows.
The 864 elements $b^\mu k$, for $\mu=0,1,-1$ and $k\in K$, form a set of
representatives for the cosets $\Pi g$ of~$\Pi$ in~$\bar\Gamma$. One
can verify this by a method we shall use repeatedly:
for $g=b^\mu k$ and $g'=b^{\mu'}k'$, we check that $\Pi g\ne \Pi g'$
unless $\mu'=\mu$ and $k'=k$ by having Magma calculate the index in~$\bar\Gamma$
of $\langle a_1,a_2,a_3,g'g^{-1}\rangle$.

If $1\ne\pi\in\Pi$ has finite order, then $\pi=gtg^{-1}$ for one of the
elements~$t$ given in the table of Proposition~\ref{prop:torsionelts},
or the inverse of one of these. But then $(b^\mu k)t(b^\mu k)^{-1}\in\Pi$ for
some $\mu\in\{0,1,-1\}$ and $k\in K$, and Magma's {\tt Index\/} command shows
that this is not the case. 

The Magma {\tt AbelianQuotientInvariants} command shows that $\Pi/[\Pi,\Pi]\cong\Z^2$.
For any isomorphism $f:\Pi/[\Pi,\Pi]\to\Z^2$, the image under $f$ of~$a_1^3a_2^{-2}a_3^7$ is trivial. 
We can choose $f$ so that it maps $a_1$, $a_2$ and~$a_3$ to $(1,3)$, $(-2,1)$
and $(-1,-1)$, respectively. So $f(a_1a_2^{-1}a_3^2)=(1,0)$ and $f(a_1^{-1}a_2a_3^{-3})=(0,1)$.
\end{proof}

Magma shows that the normalizer of~$\Pi$ in~$\bar\Gamma$ contains~$\Pi$ as
a subgroup of index~3, and is generated by~$\Pi$ and~$j^4$. One may verify
that
\begin{displaymath}
\begin{aligned}
j^4a_1j^{-4}&=a_3a_2^{-3}a_3^3a_1,\\
j^4a_2j^{-4}&=a_3^{-1},\quad\text{and}\\
j^4a_3j^{-4}&=a_1^{-1}a_2^{-1}a_1a_2^2a_1^{-1}a_2^{-1}a_1a_3^{-1}a_1^{-1}a_2a_1.\\
\end{aligned}
\end{displaymath}
With the above isomorphism $f:\Pi/[\Pi,\Pi]\to\Z^2$, 
\begin{equation}\label{eq:j4actiononabelianization}
f(\pi)=(m,n)\quad\implies f(j^4\pi j^{-4})=(m,n)\begin{pmatrix}0&-1\\1&-1\end{pmatrix}\quad\text{for all}\ \pi\in\Pi.
\end{equation}


\subsection{}\label{sec:Picong} Cartwright and Steger noticed that the group $\Pi$ can be exhibited as a congruence subgroup of $\bar\Gamma$: we have two reductions $r_2:\Z[\zeta]\rightarrow \F_4=\F_2[\omega]$ and $r_3:\Z[\zeta]\rightarrow \F_9=\F_3[i]$ defined by sending $\zeta$ to $\omega$ (resp. $i$) where $1+\omega+\omega^2=0$ (resp. $i^2=-1$). They induce (surjective) group morphisms $\rho_2:\bar\Gamma\rightarrow \PU(3,\F_4)$ and $\rho_3:\bar\Gamma\rightarrow \PU(3,\F_9)$ (recall that $\PU(3,\F_4)$ and $\PU(3,\F_9)$ have respective cardinality 216 and 6048).

Note that for an element of $\PU(3,\F_4)$, the determinant is well defined since $\omega^3=1$. This enables us to define a (surjective) morphism ${\rm det}_2={\rm det}\circ \rho_2:\bar\Gamma\rightarrow \F_4^*$. Let us denote the subgroup ${\rm det}_2^{-1}(1)$ of index 3 of $\bar\Gamma$ by $\Pi_2$.

Remark also that there exist subgroups of order 21 in $\PU(3,\F_9)$ (they are all conjugate) and let us denote one of them by $G_{21}$. Then, define $\Pi_3:=\rho_3^{-1}(G_{21})$: it is a subgroup of $\oGamma$ of index $288=6048/21$.

So $\Pi_2\cap\Pi_3$ is a subgroup of $\oGamma$ of index $864=3\cdot 288$, and one can show that it is isomorphic to~$\Pi$.

\begin{lemm}\label{lem:numericalinv}
The Cartwright-Steger surface $X=\Pi\backslash\twoball$ has the following numerical invariants:
$$
c_1^2=9,\ \ c_2=3,\ \ \chi(\cO_X)=1,\ \ q:=h^{1,0}=1,\ \ p_g:=h^{2,0}=1,\ \ h^{1,1}=3.
$$
\end{lemm}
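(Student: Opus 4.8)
The plan is to derive all six invariants from two inputs already available: that $X=\Pi\backslash\twoball$ is a smooth compact complex two-ball quotient---hence a minimal surface of general type satisfying the Bogomolov--Miyaoka--Yau \emph{equality}---together with the group-theoretic data $[\bar\Gamma:\Pi]=864$ and $\Pi/[\Pi,\Pi]\cong\Z^2$ provided by Theorem~\ref{thm:pigp}.

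First I would pin down the Euler number $c_2=e(X)$. Since $\Pi$ is torsion-free, $X\to\bar\Gamma\backslash\twoball$ is a degree-$864$ orbifold covering, so $e(X)=864\cdot e_{\mathrm{orb}}(\bar\Gamma\backslash\twoball)$. I would compute the orbifold Euler number of the base by stratifying its underlying space $R=\P(1,2,3)$ into its smooth open locus, the generic parts of $D_A$ and $D_B$, and the special points $P_1,\dots,P_5$, and then summing $\chi_c(S)/|\bar\Gamma_\xi|$ over the strata $S$ using the isotropy orders $24,288,12,8,3$ and the generic orders $3,4$ read off from the table in \S\ref{sec:descorb}. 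Using $e(R)=3$ and the fact that $D_A$, $D_B$ are rational, this short computation yields $e_{\mathrm{orb}}=1/288$, hence $c_2=864/288=3$. Alternatively, one simply invokes that the surface was produced in \cite{CS}, \cite{CS2} exactly as a ball quotient of Euler number~$3$.

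The rest is formal. Because $\twoball$ has compact dual $\P^2_\C$, the proportionality principle forces $c_1^2=3c_2=9$, and Noether's formula $\chi(\cO_X)=\tfrac1{12}(c_1^2+c_2)$ gives $\chi(\cO_X)=1$. As $\twoball$ is contractible, $X$ is a $K(\Pi,1)$, so $b_1(X)=b_1(\Pi)=\rank(\Pi/[\Pi,\Pi])=2$; being Kähler, $b_1=2q$ yields $q=h^{1,0}=1$, and then $\chi(\cO_X)=1-q+p_g$ gives $p_g=h^{2,0}=1$. Finally $e(X)=2-2b_1+b_2=3$ gives $b_2=5$, and the Hodge decomposition $b_2=2p_g+h^{1,1}$ gives $h^{1,1}=3$.

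The only genuinely nonformal point is the Euler-number computation: once $c_2=3$ and $b_1=2$ are known, everything else follows from Noether's formula, Hodge symmetry and the Euler-characteristic count. The delicacy in that step is the bookkeeping of the strata and isotropy orders---in particular that the weighted-projective singular points $P_4$ and $P_5$ contribute through their local group orders $8$ and $3$, even though $P_5$ lies on neither orbifold divisor. Within this paper the cleanest route is to take $c_2=3$ as the known defining property of the Cartwright--Steger surface and present the remaining identities as its immediate consequences.
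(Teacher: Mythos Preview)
Your argument is correct and follows essentially the same route as the paper's own proof: compute $c_2=864\cdot e_{\mathrm{orb}}(\bar\Gamma\backslash\twoball)=864/288=3$, invoke $c_1^2=3c_2$ for ball quotients, then run Noether's formula, $b_1=2q$ from $\Pi/[\Pi,\Pi]\cong\Z^2$, and the Hodge/Euler relations to obtain the remaining numbers. The only cosmetic difference is that you sketch a direct stratification computation of $e_{\mathrm{orb}}=1/288$, whereas the paper simply cites \cite{PY} or \cite{Sa} for this value.
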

\begin{proof}
The orbifold $\bar\Gamma\backslash\twoball$ has orbifold Euler characteristic $1/288$ (see~\cite{PY} or \cite{Sa} for instance) so that $X$ has Euler characteristic $c_2(X)=3=864/288$. Then, as it is a two-ball quotient, $c_1^2(X)=9$ and thus its arithmetic genus is $\chi(\cO_X)=\frac1{12}(c_1^2+c_2)=1$.
Since $\Pi/[\Pi,\Pi]\cong\Z^2$, we have $b_1=2q=2$. So, from
\begin{eqnarray*}
1&=& \chi(\cO_X) =  1-q+p_g,\\
3&=& c_2(X) = 2b_0-2b_1+b_2,
\end{eqnarray*}
we deduce that $p_g=1$, $b_2=5$, and finally, $h^{1,1}=3$.
\end{proof}
We will see later (Corollary~\ref{coro:picard}) that the Picard number of $X$ is actually 3.
It is our purpose to understand the geometric properties of the surface $X$, especially using its Albanese map.

\section{Configurations of some totally geodesic divisors}\label{sec:summary}

Here we describe results about configuration of totally geodesic divisors on the Cart\-wright-Steger 
surface $X=\Pi\backslash B_{\bC}^2$.

Let $\projmap:X\rightarrow R={\oGamma}\backslash\twoball$ be the
projection. We use the notation of \S\ref{sec:descorb}. From the
description of the local groups at $P_1$, $P_2$ and $P_3$, we know
that $\projmap^{-1}(P_2)$ consists of $3=864/288$ points $O_1=\Pi (O),
O_2=\Pi (b\cdot O), O_3=\Pi (b^{-1}\cdot O)$ on $X$, $\projmap^{-1}(P_1)$
consists of $36=864/24$ points, and $\projmap^{-1}(P_3)$ consists of
$72=864/12$ points. It is easy using Magma to find $k_i\in K$ such
that $\projmap^{-1}(P_1)=\{\Pi(k_i.P):1\leqslant i\leqslant 36\}$.

For the curves $D_A$ and $D_B$, their preimages $ \projmap^{-1}(D_A)$ and
$\projmap^{-1}(D_B)$ consist of singular totally geodesic curves on $X$,
denoted to be of types~$A$ and~$B$ respectively.  By the description
of $R$ in Figure \ref{fig:barQR}, the curves can only have crossings
at $\projmap^{-1}(P_i)$ for $i=1,2,3$, and since these curves are totally
geodesic, these crossings are simple. It will be crucial for us to
know the genus of the irreducible components of these totally geodesic
curves, as well as the way they self-intersect and meet each other. In
this section, we explain how we can achieve this, using computer
calculations.

\subsection{} Our first step is to describe the groups $\bar\Gamma_0$ and~$\bar\Gamma_c$ of elements fixing~$M_0$ and~$M_c$, respectively.

As we saw in Lemma~\ref{lem:isotropystab}(d), $\bar\Gamma_0$ is a
central extension of a $(2,3,12)$-triangle group, with center of order~4.  
One may check that a presentation of $\bar\Gamma_0$ is given by
the generators $s_2=(jb)^{-1}$, $s_3=b$, $s_{12}=j$ and $z_0=v$ and
the relations
\begin{displaymath}
s_{12}^{12}=s_3^3=1,s_2^2=z_0^3,z_0^4=[s_{12},z_0]=[s_3,z_0]=[s_2,z_0]=s_{12}s_3s_2=1.
\end{displaymath}

We saw in Lemma~\ref{lem:isotropystab}(e) that $\bar\Gamma_c$ is a
central extension of a $(2,4,12)$-triangle group, with center of
order~3.  One may similarly check that a presentation of
$\bar\Gamma_c$ is given by the generators $t_2=(bu^{-1})^2$,
$t_4=j^{-1}(bu^{-1})^2$, $t_{12}=j$ and $z_c=u$ and the relations
\begin{displaymath}
 t_{12}^{12}=1,t_4^4=z_c,t_2^2=z_c^3=[t_{12},z_c]=[t_4,z_c]=[t_2,z_c]=t_{12}t_4t_2=1.
\end{displaymath}

\subsection{}
Let $\varphi:\twoball\to \XX$ be the natural map. If $M$ is a mirror
of type~$A$ or~$B$, let $\bar\Gamma_M$ denote the stabilizer of~$M$
(so $\bar\Gamma_\alpha=\bar\Gamma_{M_\alpha}$).  The group
$\Pi_M=\{\pi\in\Pi:\pi(M)=M\}=\Pi\cap\bar\Gamma_M$ acts on~$M$, and is
the fundamental group of the smooth curve $\Pi_M\backslash M$. The
embedding $M\hookrightarrow\twoball$ induces an immersion
$\varphi_M:\Pi_M\backslash M\to \XX$. We write $\Pi_\alpha$ instead of~$\Pi_{M_\alpha}$. 
We need now to describe $\Pi_M$, and we start by the simpler case of mirrors of type $B$.

\subsection{The groups $\Pi_M$ when $M$ is a mirror of type~$B$.}\label{sec:MirrortypeB}
First, we consider
$\Pi_0=\Pi_{M_0}=\{\pi\in\Pi:\pi(M_0)=M_0\}=\Pi\cap\bar\Gamma_0=\Pi_2\cap\Pi_3\cap\bar\Gamma_0={\rm
  det}_2^{-1}(1)\cap\rho_3^{-1}(G_{21})\cap\bar\Gamma_0$ by
\S\ref{sec:Picong}. Restricting ${\rm det}_2$ and $\rho_3$ to
$\bar\Gamma_0$, Magma finds that $\Pi_0$ has index 288 in
$\bar\Gamma_0$.
\begin{prop}\label{prop:pi0prop}The group $\Pi_0$ has a presentation
\begin{equation}\label{eq:rank4presentation}
\langle u_1,\ldots,u_4,v_1,\ldots,v_4\ :\ [u_1,v_1][u_2,v_2][u_3,v_3][u_4,v_4]=1\rangle,
\end{equation}
with generators $u_i$, $v_i$, given below, and so $\Pi_0\backslash M_0$ 
is a curve of genus~4.
\end{prop}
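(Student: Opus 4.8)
The plan is to determine the genus first by a Euler--characteristic count, and only afterwards to produce the explicit words $u_i,v_i$. Since $\Pi$ is torsion-free (Theorem~\ref{thm:pigp}), so is its subgroup $\Pi_0=\Pi\cap\bar\Gamma_0$, and because $\bar\Gamma_0$ acts cocompactly on the totally geodesic disc $M_0$ (its image in $\mathrm{Aut}(M_0)$ being a cocompact $(2,3,12)$-triangle group), $\Pi_0$ is a cocompact Fuchsian group acting freely on $M_0$. Hence $\Pi_0\backslash M_0$ is a closed Riemann surface with $\pi_1\cong\Pi_0$, so $\Pi_0$ is automatically a surface group admitting the standard one-relator presentation \eqref{eq:rank4presentation}; the only invariant to pin down is the genus.

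To compute it, I would pass to the quotient by the centre. The generator $z_0=v$ of the order-$4$ centre of $\bar\Gamma_0$ fixes $M_0$ pointwise, so the action of $\bar\Gamma_0$ on $M_0$ factors through $\Delta:=\bar\Gamma_0/\langle z_0\rangle$, which the displayed presentation of $\bar\Gamma_0$ identifies with the $(2,3,12)$-triangle group. As $\Pi_0$ is torsion-free, $\Pi_0\cap\langle z_0\rangle=\{1\}$, so $\Pi_0$ maps isomorphically onto its image $\bar\Pi_0\subset\Delta$, and
\begin{displaymath}
[\Delta:\bar\Pi_0]=\frac{[\bar\Gamma_0:\Pi_0]}{|\langle z_0\rangle|}=\frac{288}{4}=72,
\end{displaymath}
using the index $288$ found by Magma. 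The orbifold $\Delta\backslash M_0$ is a sphere with cone points of orders $2,3,12$, of orbifold Euler characteristic $-1+\tfrac12+\tfrac13+\tfrac1{12}=-\tfrac1{12}$; since $\bar\Pi_0$ is torsion-free the cover $\Pi_0\backslash M_0\to\Delta\backslash M_0$ is an honest (unramified) orbifold cover of degree $72$, whence $\chi(\Pi_0\backslash M_0)=72\cdot\bigl(-\tfrac1{12}\bigr)=-6$ and $\Pi_0\backslash M_0$ has genus~$4$.

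It remains to exhibit generators realising \eqref{eq:rank4presentation}. For this I would have Magma perform Reidemeister--Schreier rewriting on the presentation of $\bar\Gamma_0$ relative to the index-$288$ subgroup $\Pi_0$ (identified inside $\bar\Gamma_0$ through the reductions $\mathrm{det}_2$ and $\rho_3$ of \S\ref{sec:Picong}), yielding some presentation of $\Pi_0$. Knowing a priori that $\Pi_0$ is a genus-$4$ surface group, I would then transform this presentation into the standard symplectic form $[u_1,v_1][u_2,v_2][u_3,v_3][u_4,v_4]=1$ and read off the words $u_i,v_i$. The conceptual content lies entirely in the Euler--characteristic computation above; the main practical obstacle is this last step --- the mechanical but lengthy rewriting and the reduction of a non-standard surface-group presentation to standard form --- which is best carried out by computer and recorded in the supplementary material.
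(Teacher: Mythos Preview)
Your Euler-characteristic computation is correct and is exactly the argument the paper gives in \S\ref{sec:triangledesc}, immediately after this proposition, to recover the genus from the index~288 alone. The paper's own proof of Proposition~\ref{prop:pi0prop} proceeds in the opposite order: it first writes down eight explicit elements $g_1,\ldots,g_8\in\Pi$ as short words in $a_1,a_2,a_3$ (and conjugation by~$j^4$) that happen to lie in~$\bar\Gamma_0$, has Magma verify that $\langle g_1,\ldots,g_8\rangle$ has index~288 in~$\bar\Gamma_0$ and hence equals~$\Pi_0$, obtains from Magma a single-relator presentation in the~$g_i$, and then converts that relator to the standard symplectic form by an explicit change of generators (due to Hillman). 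The genus is read off from the shape of the relation rather than from Riemann--Hurwitz.

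The substantive difference is not in establishing the genus but in the form of the output. Your Reidemeister--Schreier rewriting, carried out inside~$\bar\Gamma_0$, would produce the $u_i,v_i$ as words in the generators $s_2,s_3,s_{12},z_0$ of~$\bar\Gamma_0$, i.e.\ ultimately in $b,j,v$. The paper instead produces them directly as words in $a_1,a_2,a_3$ (and~$j^4$), and this is precisely what is needed later: in \S\ref{sec:imhone} one computes the images $f(u_i),f(v_i)\in\Z^2$ under the abelianisation $f:\Pi\to\Z^2$, and for that the $u_i,v_i$ must be expressed in the generators of~$\Pi$. Rewriting a word in $b,j,v$ lying in~$\Pi$ as a word in $a_1,a_2,a_3$ is an extra coset-enumeration step your outline does not mention; the paper's choice to guess generators already in the~$a_i$ and then verify the index sidesteps this.
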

\begin{proof}As $j^4$ normalizes~$\Pi$, we can define $g_1,\ldots,g_8\in\Pi$ by setting
$g_1=a_3^{-3}a_1^{-1}a_2a_1$, $g_3=a_2a_1^{-2}a_3^{-3}a_1^{-1}$, $g_5=j^4a_2a_1j^8a_2^{-1}a_3^3a_1^2$,
and $g_7=j^4a_1^{-1}a_2^{-1}j^4a_2a_1j^4$, and then $g_{2\nu}=j^4g_{2\nu-1}j^{-4}$ for $\nu=1,2,3,4$.
These are in~$\bar\Gamma_0$.
Magma verifies that $G=\langle g_1,\ldots,g_8\rangle$ has index~288 in~$\bar\Gamma_0$,
and so $G=\Pi_0$, and gives a presentation of~$\Pi_0$ which has just one relation:
\begin{equation}\label{eq:pi0relation}
g_1g_2g_3g_4g_5g_6g_7g_8g_1^{-1}g_3^{-1}g_5^{-1}g_7^{-1}g_2^{-1}g_4^{-1}g_6^{-1}g_8^{-1}=1.
\end{equation}
By a method shown
to us by Jonathan Hillman, we replace the generators $g_i$ by generators
$u_i$ and~$v_i$, where $u_i=E_1\cdots E_{i-1}D_iE_{i-1}^{-1}\cdots E_1^{-1}$ and
$v_i=E_1\cdots E_{i-1}E_iE_{i-1}^{-1}\cdots E_1^{-1}$,
where
\begin{displaymath}
\begin{aligned}
D_1&=g_1g_2g_3g_4g_5g_6g_7,\\
D_2&=g_1g_2g_3g_4,\\
\end{aligned}
\quad
\begin{aligned}
D_3&=g_1,\\      
D_4&=g_3^{-1},\\
\end{aligned}
\quad\text{and}\quad
\begin{aligned}
E_1&=g_8g_1^{-1}g_3^{-1}g_5^{-1},\\ 
E_2&=g_5g_6g_2^{-1},\\ 
\end{aligned}
\quad
\begin{aligned}
E_3&=g_2g_3g_6^{-1},\\ 
E_4&=g_6\\
\end{aligned}
\end{displaymath}
and these generators $u_i$ and $v_i$ satisfy the stated relation.
\end{proof}

We now consider $\Pi_M$ for the other mirrors~$M$ of type~$B$.
\begin{prop}\label{prop:typeBmirrorgroups}If $g\in\bar\Gamma$ and $M=g(M_0)$ is a mirror of type~$B$, then
\begin{itemize}
\item[(a)]There is a $\pi\in\Pi$ such that $\pi(M)=M_0$, $M_1$ or~$M_\infty$.
\item[(b)]Correspondingly, $\Pi_M$ is conjugate in~$\Pi$ to either $\Pi_0$, $\Pi_1$ or~$\Pi_\infty$.
\item[(c)] $\Pi_M=g\Pi_0g^{-1}$.
\item[(d)] $h(\Pi_M)h^{-1}=\Pi_{h(M)}$ for any $h\in\bar\Gamma$.
\end{itemize}
In particular, it follows from (c) that for any mirror $M$ of type $B$, $\Pi_M\backslash M\cong\Pi_0\backslash M_0$.
\end{prop}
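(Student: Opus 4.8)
The plan is to reduce every assertion to a statement about the finite-index subgroup $\Pi\subset\bar\Gamma$ and the mirror stabilizer $\bar\Gamma_0=\bar\Gamma_{M_0}$, exploiting that $\bar\Gamma$ acts transitively on $\cM_B$ (Lemma~\ref{lem:isotropystab}(a)) and that $\cM_B=\{g(M_0):g\in\bar\Gamma\}$. Since $g(M_0)=g'(M_0)$ iff $g\bar\Gamma_0=g'\bar\Gamma_0$, the set $\cM_B$ is identified with $\bar\Gamma/\bar\Gamma_0$, so the $\Pi$-orbits of type-$B$ mirrors correspond to the double cosets $\Pi\backslash\bar\Gamma/\bar\Gamma_0$, equivalently to the orbits of $\bar\Gamma_0$ acting on the right on the $864$-element set $\Pi\backslash\bar\Gamma$. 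For (a) I would feed the generators $s_2,s_3,s_{12},z_0$ of $\bar\Gamma_0$ given above into Magma and compute these orbits; the expected output is exactly three. Using Lemma~\ref{lem:Korbitc} to write $M_1=k(M_0)$ and $M_\infty=k'(M_0)$ with $k=v^3uj$ and $k'=u^{-1}v^2uj^6$, I would then check that $M_0$, $M_1$, $M_\infty$ fall into three distinct orbits, so that $\bar\Gamma=\Pi\bar\Gamma_0\sqcup\Pi k\bar\Gamma_0\sqcup\Pi k'\bar\Gamma_0$; this yields (a).

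Part (b) is then formal. Given $M$, choose by (a) some $\pi\in\Pi$ with $\pi(M)=M_i$ for $i\in\{0,1,\infty\}$. Because $\pi\in\Pi$ we have $\pi\Pi\pi^{-1}=\Pi$, and since conjugation sends stabilizers to stabilizers, $\pi\Pi_M\pi^{-1}=\pi(\Pi\cap\bar\Gamma_M)\pi^{-1}=\Pi\cap\bar\Gamma_{\pi(M)}=\Pi_{M_i}$; thus $\Pi_M$ is conjugate in $\Pi$ to $\Pi_0$, $\Pi_1$ or $\Pi_\infty$.

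The substance of the proposition lies in (c) and (d), and both reduce to one key point. Writing $\bar\Gamma_M=g\bar\Gamma_0g^{-1}$ and $g\Pi_0g^{-1}=g\Pi g^{-1}\cap\bar\Gamma_M$, assertion (c) is exactly the equality $\Pi\cap\bar\Gamma_M=g\Pi g^{-1}\cap\bar\Gamma_M$, and (d) unwinds to the same shape for the mirror $h(M)$. After conjugating into $\bar\Gamma_0$, both follow from the key lemma that $c\Pi c^{-1}\cap\bar\Gamma_0=\Pi_0$ for every $c\in\bar\Gamma$ --- a strong statement, since $\Pi$ is \emph{not} normal in $\bar\Gamma$ (its normalizer has index $3$). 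To prove it I would use the congruence description of \S\ref{sec:Picong}: with $\Pi=\Pi_2\cap\Pi_3$, the subgroup $\Pi_2={\rm det}_2^{-1}(1)$ is normal in $\bar\Gamma$, and as ${\rm det}_2$ takes values in the abelian group $\F_4^*$, its restriction to $\bar\Gamma_0$ is unchanged under conjugation; so only the $\Pi_3$-condition can move. Writing $c\Pi_3c^{-1}=\rho_3^{-1}(\sigma G_{21}\sigma^{-1})$ with $\sigma=\rho_3(c)$ and using surjectivity of $\rho_3$, the key lemma follows from the finite-group identity $\rho_3(\bar\Gamma_0)\cap\sigma G_{21}\sigma^{-1}=\rho_3(\bar\Gamma_0)\cap G_{21}$ for all $\sigma\in\PU(3,\F_9)$, a routine Magma verification inside the group of order $6048$. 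This equivariance is the main obstacle: it is the one place where the arithmetic (congruence) nature of $\Pi$ is indispensable, and it cannot be read off from the abstract action alone.

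Finally, the ``In particular'' statement follows from (c): the isometry $g$ of $\twoball$ restricts to a biholomorphism $M_0\to M$ which, by $\Pi_M=g\Pi_0g^{-1}$, intertwines the $\Pi_0$-action on $M_0$ with the $\Pi_M$-action on $M$, and hence descends to a biholomorphism $\Pi_0\backslash M_0\to\Pi_M\backslash M$.
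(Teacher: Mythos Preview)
Your proof is correct in outline and, for parts (b), (d) and the ``In particular'', essentially identical to the paper's. For (a) and especially (c), however, you take a genuinely different route.

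For (a), the paper does not count double cosets abstractly. Instead it uses that the $b^\mu k$ ($\mu\in\{0,\pm1\}$, $k\in K$) are coset representatives of $\Pi$ in $\bar\Gamma$ together with Lemma~\ref{lem:Korbitc} to reduce to $18$ concrete mirrors $b^\mu(M_\alpha)$, and then for each one searches among short words in $a_1,a_2,a_3$ to find an explicit $\pi\in\Pi$ carrying it to $M_0$, $M_1$ or $M_\infty$. Your orbit-counting argument is cleaner and simultaneously proves mutual exclusivity of the three cases (which the paper defers to \S\ref{sec:triangledesc}); the paper's approach, on the other hand, produces the explicit $\pi$'s that are needed later when writing down generators of $\Pi_\infty$.

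For (c), the paper does not invoke the congruence description at all. It first verifies directly, for each coset representative $h=b^\mu k$ and each of the eight generators $g_j$ of~$\Pi_0$, that $hg_jh^{-1}\in\Pi$ (via the {\tt Index} trick), giving $h\Pi_0h^{-1}\subset\Pi$ for all $h\in\bar\Gamma$; it then upgrades the inclusion $g\Pi_0g^{-1}\subset\Pi_M$ to equality for the specific $g=k_1,k_\infty$ by checking, over a transversal $t_1,\ldots,t_{288}$ of $\Pi_0$ in $\bar\Gamma_0$, that $k_\beta t_ik_\beta^{-1}\notin\Pi$ for $i\ne1$; finally it bootstraps to arbitrary $g$ via (a). Your reduction to the single identity $\rho_3(\bar\Gamma_0)\cap\sigma G_{21}\sigma^{-1}=\rho_3(\bar\Gamma_0)\cap G_{21}$ in $\PU(3,\F_9)$ is more conceptual and, if the Magma check goes through, replaces several thousand index computations by one finite-group verification; it also makes transparent \emph{why} conjugating by $\bar\Gamma$ preserves $\Pi_0$ despite $\Pi$ not being normal. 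The trade-off is that your argument depends on a claim about $\rho_3(\bar\Gamma_0)$ that you have not actually computed, whereas the paper's checks are mechanical membership tests in $\Pi$ that are guaranteed to terminate.
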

\begin{proof}
(a) The elements $b^\mu k$, $\mu=0,1,-1$ and $k\in K$, form a set of coset representatives 
of~$\Pi$ in~$\bar\Gamma$. So using Lemma~\ref{lem:Korbitc}, we may assume that $M=b^\mu(M_\alpha)$ for some $\mu\in\{0,1,-1\}$ 
and $\alpha\in\{0,\pm1,\pm i,\infty\}$. Then, searching amongst short words in the generators $a_i$ of~$\Pi$, we 
quickly find $\pi\in\Pi$ such that $\pi(M)=M_\beta$ for $\beta\in\{0,1,\infty\}$. For example,
taking $\pi=a_3^3a_1^2a_2^{-1}$, we have $\pi(bM_{-1})=M_1$.
This proves~(a), and (b) follows immediately.

(c) We first show that $h\Pi_0h^{-1}\subset\Pi$ for each $h\in\bar\Gamma$. We may assume that $h=b^\mu k$ as in~(a). 
For each of the 8~generators $g_j$ of~$\Pi_0$ given in the proof of Proposition~\ref{prop:pi0prop} we have Magma 
check that $\langle a_1,a_2,a_3,hg_jh^{-1}\rangle$ has index~864 in~$\bar\Gamma$ , so that $hg_jh^{-1}\in\Pi$.
It follows, in particular, that $h\Pi_0h^{-1}=\Pi_0$ for each $h\in\bar\Gamma_0$.
We next prove~(c) in the cases $g=k_\beta$, $\beta=1,\infty$. Now $k_\beta\Pi_0k_\beta^{-1}\subset\Pi$ and so 
$\Pi_0\subset k_\beta^{-1}\Pi_\beta k_\beta\subset\bar\Gamma_0$.
Choose a transversal $t_1=1,\ldots,t_{288}$ of~$\Pi_0$ in~$\bar\Gamma_0$. Then Magma verifies
that the index in~$\bar\Gamma$ of $\langle a_1,a_2,a_3,k_\beta t_ik_\beta^{-1}\rangle$ is less than~864 if $i\ne1$.
Thus $\Pi_0=k_\beta^{-1}\Pi_\beta k_\beta$, and (c) holds for $g=k_\beta$, $\beta=0,1,\infty$.
By~(a), for our given~$g$, there is a $\pi\in\Pi$ so that $g(M_0)=\pi(M_\beta)$ for one of these~$\beta$'s.
Then $h=k_\beta^{-1}\pi^{-1}g$ is in~$\bar\Gamma_0$, so that $h\Pi_0h^{-1}=\Pi_0$. Then $(\pi^{-1}g)\Pi_0(\pi^{-1}g)^{-1}=\Pi_\beta$
by~(c) for $g=k_\beta$. Thus $g\Pi_0g^{-1}=\pi(\Pi_{M_\beta})\pi^{-1}=\Pi_{\pi(M_\beta)}=\Pi_M$.
Part~(d) is immediate from~(c).
\end{proof}

The three possibilities in~(a) are mutually exclusive (see \S\ref{sec:triangledesc} below). If $M$ is a mirror of type~$B$,
then by Proposition~\ref{prop:typeBmirrorgroups}(a), the image of the immersion 
$\varphi_M:\Pi_M\backslash M\to \XX$ is equal to
the image of $\varphi_{M'}$ for $M'=M_0$, $M_\infty$, or~$M_1$. We will denote by $E_1$, $E_2$ and~$E_3$ respectively these images (which are distinct since the cases are mutually exclusive).
To calculate entries in the table in \S\ref{sec:imhone}, we need explicit generators for $\Pi_\infty$.
We start with the generators $g_i''=k_\infty g_ik_\infty^{-1}$, where $g_1,\ldots,g_8$ are as in proof of 
Proposition~\ref{prop:pi0prop}. The $g_i''$ satisfy exactly the same relation
as do the $g_i$'s, and so standard generators $u_i$ and~$v_i$ can be found for $\Pi_\infty$ in exactly
the same way as was done for~$\Pi_0$. To calculate the  $f(u_i)$ and $f(v_i)$'s, we need to 
express the $g_i''$'s in terms of the generators of~$\Pi$. One may verify that:
\begin{displaymath}
\begin{aligned}
g_1''&=j^4(a_1^{-1}a_3^{-2}a_1^{-1})j^8a_1^{-1}a_2^{-1},\\
g_3''&=j^8(a_3a_1a_2a_1^{-1}a_2^{-1})j^4,\\
\end{aligned}
\quad
\begin{aligned}
g_5''&=j^8(a_2^{-1}a_3^{-1})j^4,\\
g_7''&=j^4(a_1a_3a_1^{-1}a_3^{-2})j^8,\\
\end{aligned}
\end{displaymath}
and $g_{2\nu}''=j^4g_{2\nu-1}''j^8$ for $\nu=1,2,3,4$.

\subsection{The groups $\Pi_M$ when $M$ is a mirror of type~$A$}\label{sec:MirrortypeA}
Magma finds that $\Pi_c$ has index~324 in $\bar\Gamma_c$.

\begin{prop}\label{prop:picprop}The group $\Pi_c$ has a presentation
\begin{equation}\label{eq:rank10presentation}
\langle u_1,\ldots,u_{10},v_1,\ldots,v_{10}\ :\ [u_1,v_1][u_2,v_2]\cdots[u_9,v_9][u_{10},v_{10}]=1\rangle,
\end{equation}
and so $\Pi_c\backslash M_c$ is a curve of genus~10.
\end{prop}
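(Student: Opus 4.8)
The plan is to follow, line for line, the argument used for $\Pi_0$ in the proof of Proposition~\ref{prop:pi0prop}, replacing the group $\bar\Gamma_0$ by $\bar\Gamma_c$ everywhere. Recall from Lemma~\ref{lem:isotropystab}(d) and the presentation of $\bar\Gamma_c$ given above that $\bar\Gamma_c$ is a central extension, with center $\langle z_c\rangle=\langle u\rangle$ of order~$3$, of the $(2,4,12)$ von Dyck triangle group, and that Magma finds $\Pi_c$ to have index~$324$ in $\bar\Gamma_c$. The surface $\Pi_c\backslash M_c$ is smooth with fundamental group $\Pi_c$, so it suffices to produce a standard genus-ten surface-group presentation of $\Pi_c$.

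First I would exhibit twenty explicit elements $g_1,\ldots,g_{20}$ of $\Pi$ lying in $\bar\Gamma_c$. Since $j^4$ normalizes $\Pi$ and fixes $M_c$ (because $j$ acts on $\twoball$ by $(z,w)\mapsto(\zeta z,\zeta w)$, hence preserves every $M_\alpha$), one expects, exactly as in the type~$B$ case, to write down ten words $g_1,g_3,\ldots,g_{19}$ in $a_1,a_2,a_3$ and to define $g_{2\nu}=j^4g_{2\nu-1}j^{-4}$ for $\nu=1,\ldots,10$; here $j^4=t_{12}^4$ induces the order-three rotation of $M_c$. I would then have Magma verify that $G=\langle g_1,\ldots,g_{20}\rangle$ has index~$324$ in $\bar\Gamma_c$. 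As $G\subseteq\Pi_c$ and $\Pi_c$ already has index~$324$, this forces $G=\Pi_c$, and the same Magma computation returns a one-relator presentation of $\Pi_c$.

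Next, precisely as Hillman's procedure was applied to $\Pi_0$, I would transform the returned one-relator presentation into standard form. Choosing suitable words $D_i,E_i$ in the $g_j$ and setting $u_i=E_1\cdots E_{i-1}D_iE_{i-1}^{-1}\cdots E_1^{-1}$ and $v_i=E_1\cdots E_{i-1}E_iE_{i-1}^{-1}\cdots E_1^{-1}$, the single relator should become $[u_1,v_1]\cdots[u_{10},v_{10}]=1$, which is the presentation~(\ref{eq:rank10presentation}) and identifies $\Pi_c\backslash M_c$ as a curve of genus~$10$. As an independent check fixing the genus in advance, I would compute the orbifold Euler characteristic: since $\Pi$ is torsion-free, $\Pi_c$ meets the order-three center $\langle u\rangle$ of $\bar\Gamma_c$ trivially, so $\Pi_c$ injects into the triangle group $\Delta=\bar\Gamma_c/\langle u\rangle$ as a subgroup of index $324/3=108$. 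As $\chi^{\mathrm{orb}}(\Delta)=-1+\tfrac12+\tfrac14+\tfrac1{12}=-\tfrac16$, the smooth curve $\Pi_c\backslash M_c$ has $\chi=108\cdot(-\tfrac16)=-18$, i.e.\ genus~$10$, in agreement with~(\ref{eq:rank10presentation}).

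The main obstacle is the Hillman standardization step: Magma returns an essentially arbitrary one-relator presentation of $\Pi_c$, and the real work is finding the change of generators $D_i,E_i$ that carries the single relator to the canonical genus-ten commutator product. Selecting the words $g_1,g_3,\ldots,g_{19}$ so that the index-$324$ verification succeeds is likewise a matter of guided computer search. The Euler-characteristic computation is what makes this tractable, since it guarantees beforehand that the answer must be a genus-ten surface group; this both pins down the required number of generators (twenty) and assures that a standardization to the form~(\ref{eq:rank10presentation}) exists.
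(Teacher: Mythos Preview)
Your proposal is correct and follows essentially the same approach as the paper: exhibit twenty explicit words in $\Pi\cap\bar\Gamma_c$ (the paper actually specifies nine of them and obtains the remaining eleven via $g_{\nu+1}=j^4g_\nu j^{-4}$, rather than your ten-plus-ten split, but this is immaterial), have Magma confirm index~$324$ and return a one-relator presentation, then apply Hillman's standardization to reach~\eqref{eq:rank10presentation}. Your orbifold Euler-characteristic check is a pleasant addition not in the paper's proof here (though the same computation appears in~\S\ref{sec:triangledesc}).
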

\begin{proof}The proof is very similar to that of Proposition~\ref{prop:pi0prop}.
We define 20~elements $g_1,\ldots,g_{20}$ of~$\Pi$ by setting
\begin{displaymath}
\begin{aligned}
g_1&=j^8a_1^{-1}a_2a_1a_3a_1^{-1}j^4a_2a_1,\\
g_3&=j^4a_2a_1a_2^{-2}a_1^{-1}a_3j^4a_3^3j^4,\\
g_5&=j^8a_1^{-1}j^4a_2a_1j^4a_3a_2^{-1}a_1a_3a_1^{-1}j^8,\\
g_7&=j^8a_2a_1j^4a_3^{-1}j^4a_2a_1^{-1}a_2^{-1}a_3^{-3}j^8,\\
g_9&=j^8a_1^{-1}a_2^{-2}a_1^{-1}a_3^{-1}j^8a_1^{-1}a_2^{-1}j^8,\\
\end{aligned}
\quad
\begin{aligned}
g_{12}&=a_2^{-1}a_1a_3a_1^{-1}a_3^{-1}j^4a_3a_1a_2^2a_1^{-1}a_2^{-1}j^8,\\
g_{15}&=j^4a_1j^4a_2a_3a_1^{-1}j^4,\\
g_{17}&=j^8a_1^{-2}a_2^{-1}j^4a_3a_1a_2a_1,\\
g_{19}&=a_2^{-1}a_1a_3a_1^{-1}a_3^{-2}j^4a_1a_2j^4a_1^{-1}a_2^{-1}j^4,\\
\vphantom{g_{20}}&\vphantom{a_1^{-1}}\\
\end{aligned}
\end{displaymath}
and also $g_{\nu+1}=j^4g_{\nu}j^{-4}$ for $\nu\in\{1,3,5,7,9,10,12,13,15,17,19\}$. These are in~$\bar\Gamma_c$.
Magma verifies that $G=\langle g_1,\ldots,g_{20}\rangle$ has index~324 in~$\bar\Gamma_c$,
and so $G=\Pi_c$, and gives a presentation of~$\Pi_c$ which has just one relation:
\begin{displaymath}
\begin{aligned}
&g_4g_{14}^{-1}g_2^{-1}g_{17}^{-1}g_9g_{19}g_{20}g_{14}g_7^{-1}g_{10}^{-1}g_5^{-1}g_{16}^{-1}g_3^{-1}g_{12}^{-1}g_1g_2g_{18}^{-1}g_{10}g_{19}^{-1}g_{12}\\
&\times g_8^{-1}g_{11}^{-1}g_6^{-1}g_{15}g_{16}g_4^{-1}g_{13}^{-1}g_1^{-1}g_{17}g_{18}g_{11}g_{20}^{-1}g_{13}g_7g_8g_9^{-1}g_5g_6g_{15}^{-1}g_3=1.
\end{aligned}
\end{displaymath}
Using the same method as in the proof of Proposition~\ref{prop:pi0prop}, we can replace the generators $g_i$ by generators
$u_i$ and~$v_i$ satisfying the given relation. We omit the details.
\end{proof}

We now consider $\Pi_M$ for the other mirrors~$M$ of type~$A$. As well as $c=c_{\scriptscriptstyle{+--}}$,
the parameter $-c=c_{\scriptscriptstyle{---}}$ is important in the next result.

\begin{prop}\label{prop:typeAmirrorgroups}If $g\in\bar\Gamma$ and $M=g(M_c)$ is a mirror of type~$A$, then
\begin{itemize}
\item[(a)]There is a $\pi\in\Pi$ such that $\pi(M)=M'$, where $M'\in\{M_c, M_{-c}, b(M_c),b^{-1}(M_c)\}$.
\item[(b)]If $M'$ is as in~(a), then $\Pi_M$ is conjugate in~$\Pi$ to $\Pi_{M'}$.
\item[(c)]$\Pi_M=g\Pi_cg^{-1}$ in the first two cases of~(a), and in particular if $g=k_\alpha$ for any 
$\alpha\in\{c_{\scriptscriptstyle{+++}},\ldots,c_{\scriptscriptstyle{---}}\}$, so that $\Pi_\alpha=k_\alpha\Pi_ck_\alpha^{-1}$ for all these $\alpha$'s. 
\item[(d)]In the other two cases of~(a), $g\Pi_cg^{-1}$ has index~3 in~$\Pi_M$.
\end{itemize}
\end{prop}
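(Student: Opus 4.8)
The plan is to follow the proof of Proposition~\ref{prop:typeBmirrorgroups} as closely as possible. Parts~(a) and~(b) go through verbatim; the real novelty is part~(d), where, in contrast to the type~$B$ situation, the stabiliser $\Pi_M$ is strictly larger than $g\Pi_c g^{-1}$, and the task is to pin the index down to exactly~$3$.

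For~(a) I would use that the $864$ elements $b^\mu k$ ($\mu\in\{0,1,-1\}$, $k\in K$) represent the cosets of $\Pi$ in $\bar\Gamma$, so that by the description of the $K$-orbit of $M_c$ in Lemma~\ref{lem:Korbitc} (the eight mirrors $M_{c_{\pm\pm\pm}}$) I may assume $M=b^\mu(M_{c_{\pm\pm\pm}})$. A finite Magma search over short words in $a_1,a_2,a_3$ then yields $\pi\in\Pi$ taking $M$ to one of $M_c$, $M_{-c}$, $b(M_c)$, $b^{-1}(M_c)$; that these four representatives are genuinely inequivalent is deferred to~\S\ref{sec:triangledesc}. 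Part~(b) is then immediate.

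For~(c) I treat the eight mirrors $M_\alpha$, $\alpha=c_{\pm\pm\pm}$, at once. Using the generators $g_1,\dots,g_{20}$ of $\Pi_c$ from Proposition~\ref{prop:picprop}, Magma checks that $\langle a_1,a_2,a_3,k_\alpha g_j k_\alpha^{-1}\rangle$ has index~$864$ for each $j$, so $k_\alpha\Pi_c k_\alpha^{-1}\subset\Pi\cap\bar\Gamma_\alpha=\Pi_\alpha$; for the reverse inclusion I fix a transversal $t_1=1,\dots,t_{324}$ of $\Pi_c$ in $\bar\Gamma_c$ and have Magma verify that $\langle a_1,a_2,a_3,k_\alpha t_i k_\alpha^{-1}\rangle$ has index $<864$ for $i\neq1$, forcing $\Pi_\alpha=k_\alpha\Pi_c k_\alpha^{-1}$ (this covers $M_c$ and $M_{-c}=v^2(M_c)$). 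To pass to a general $g$ in the first two cases I also record, by conjugating the $g_j$ by the generators $t_2,t_4,t_{12},z_c$ of $\bar\Gamma_c$, that $\Pi_c$ is normal in $\bar\Gamma_c$. Then, choosing $\pi\in\Pi$ with $\pi(M)=M_\alpha$, $\alpha\in\{c,-c\}$, and setting $h=k_\alpha^{-1}\pi g\in\bar\Gamma_c$, one gets
\[
g\Pi_c g^{-1}=\pi^{-1}k_\alpha(h\Pi_c h^{-1})k_\alpha^{-1}\pi=\pi^{-1}k_\alpha\Pi_c k_\alpha^{-1}\pi=\pi^{-1}\Pi_\alpha\pi=\Pi_M,
\]
using $h\Pi_c h^{-1}=\Pi_c$, the already-proved equality for $k_\alpha$, and $\pi^{-1}(M_\alpha)=M$ with $\pi\in\Pi$.

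The hard part is~(d). Here the geometry differs from type~$B$: every type~$B$ representative lay in the $K$-orbit of $M_0$ through the origin and all indices were~$1$, whereas $M'=b^{\epsilon}(M_c)$ ($\epsilon=\pm1$) does not pass through~$O$. I can still check with Magma that $b^{\epsilon}\Pi_c b^{-\epsilon}\subset\Pi$, hence $b^{\epsilon}\Pi_c b^{-\epsilon}\subset\Pi_{M'}$, but equality fails because $\Pi_{M'}=\Pi\cap b^{\epsilon}\bar\Gamma_c b^{-\epsilon}$ and $b^{\epsilon}\Pi_c b^{-\epsilon}=b^{\epsilon}\Pi b^{-\epsilon}\cap b^{\epsilon}\bar\Gamma_c b^{-\epsilon}$ are the traces on $\bar\Gamma_{M'}=b^{\epsilon}\bar\Gamma_c b^{-\epsilon}$ of two \emph{distinct} conjugates of~$\Pi$. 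I would restrict the congruence data $\det_2$ and $\rho_3$ of~\S\ref{sec:Picong} to $\bar\Gamma_{M'}$ and have Magma compute $[\bar\Gamma_{M'}:\Pi_{M'}]=108$; since $[\bar\Gamma_{M'}:b^{\epsilon}\Pi_c b^{-\epsilon}]=[\bar\Gamma_c:\Pi_c]=324=3\cdot108$, this gives $[\Pi_{M'}:b^{\epsilon}\Pi_c b^{-\epsilon}]=3$, and the conjugation-by-$\pi$ chain of~(c) transports the conclusion to an arbitrary $g$ in the last two cases. The main obstacle is organising this last index computation correctly and confirming the value~$108$; the factor~$3$ is heralded by the index-$3$ normaliser $\langle\Pi,j^4\rangle$ of $\Pi$ in $\bar\Gamma$ (equivalently, by the $\Z_3$-action on $X$), and should be expected on those grounds.
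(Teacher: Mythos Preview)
Your argument is correct and for (a)--(c) tracks the paper's proof closely. The one substantive difference is in how (c) and~(d) are organised. The paper establishes once and for all that $h\Pi_c h^{-1}\subset\Pi$ for \emph{every} $h\in\bar\Gamma$ (checking the $864$ coset representatives $b^\mu k$), from which normality of $\Pi_c$ in $\bar\Gamma_c$ drops out for free; it then treats (c) and~(d) by a single uniform device: with the transversal $t_1=1,\ldots,t_{324}$ of $\Pi_c$ in $\bar\Gamma_c$, it simply counts, for each of $g=k_c,k_{-c},b,b^{-1}$, how many $gt_ig^{-1}$ lie in~$\Pi$, obtaining $1$ in the first two cases and $3$ in the last two. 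Your route for~(d), computing $[\bar\Gamma_{M'}:\Pi_{M'}]=108$ directly from the congruence description and dividing into $324$, is a valid alternative and makes the link to the genus-$4$ normalisations in~\S\ref{sec:triangledesc} more transparent; the paper's transversal count is a bit leaner since it reuses the machinery already in place for~(c) and avoids a separate index computation. Either way the content is the same.
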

\begin{proof}The proof is similar to that of Proposition~\ref{prop:typeBmirrorgroups}. For~(a), we may assume that 
$M=b^\mu(M_\alpha)$ for some $\mu\in\{0,1,-1\}$ 
and $\alpha\in\{c_{\scriptscriptstyle{+++}},\ldots,c_{\scriptscriptstyle{---}}\}$. For each of these~$M$'s, we find explicit $\pi\in\Pi$ such that $\pi(M)=M'$ 
for an~$M'$ in the given list. The most complicated~$\pi$ needed is $\pi=a_2a_1^{-2}a_3^{-1}a_1a_3^{-1}a_1^{-1}a_2^{-2}$, satisfying
$\pi(b^{-1}(M_{c_{\scriptscriptstyle{-+-}}}))=b(M_c)$.

In proving~(c) and~(d), we first show that $h\Pi_ch^{-1}\subset\Pi$ for all $h\in\bar\Gamma$ as in
Proposition~\ref{prop:typeBmirrorgroups}, and therefore that $h\Pi_ch^{-1}=\Pi_c$ for $h\in\bar\Gamma_c$.
We are reduced to proving~(c) and~(d) for $g=k_c$, $k_{-c}$, $b$ and~$b^{-1}$. We have $\Pi_c\subset g^{-1}\Pi_Mg\subset\bar\Gamma_c$, and choose a transversal $t_1=1,\ldots,t_{324}$ of~$\Pi_c$ in~$\bar\Gamma_c$.
Magma verifies that $gt_ig^{-1}\in\Pi$ only for $i=1$ in the first two cases as in Proposition~\ref{prop:typeBmirrorgroups}, but that $gt_ig^{-1}\in\Pi$ for three $i$'s in the last two cases.
\end{proof}

In Proposition~\ref{prop:typeAmirrorgroups}(d), $\Pi_M\backslash M$ has genus~4
by the Riemann-Hurwitz formula, and we can find explicit generators $u_i$, $v_i$ of~$\Pi_M$
such that $[u_1,v_1][u_2,v_2][u_3,v_3][u_4,v_4]=1$. When $M=b(M_c)$, the following eight elements generate $\Pi_M$:
\begin{displaymath}
\begin{aligned}
p_1&=a_2^3a_1^{-1}a_3^{-1}j^8a_2^{-2}a_1^{-1}j^4,\\
p_2&=a_3^3a_1a_3^2a_2a_1j^4a_3^{-1}j^8a_3^{-2}a_1^{-1}a_3^{-3},\\
p_3&=j^8a_1^{-1}a_3^{-3}a_2^2j^4a_3^{-2}a_1^{-1}a_3^{-3},\\
p_4&=j^8a_2a_1a_2^{-2}a_1^{-1}j^4a_3^3a_1^2a_2^{-1},\\
\end{aligned}
\quad
\begin{aligned}
p_5&=a_3^3a_1a_3^2j^4a_1^{-1}j^8a_3^2a_1a_2^{-3},\\
p_6&=a_3^3a_1a_2a_1a_3a_2^{-3},\\
p_7&=a_3^3a_1j^8a_1a_2^{-2}a_1^{-1}a_3^2j^4,\\
p_8&=j^4a_3^{-2}j^8a_2a_1a_2a_1a_2^{-2},\\
\end{aligned}
\end{displaymath}
and satisfy the single relation
\begin{displaymath}
p_5^{-1}p_2^{-1}p_5p_1p_3p_8^{-1}p_4p_1^{-1}p_7^{-1}p_6^{-1}p_7p_2p_3^{-1}p_8p_4^{-1}p_6=1.
\end{displaymath}
Following the same procedure as in the proof of Proposition~\ref{prop:pi0prop}, we obtain
a presentation~\eqref{eq:rank4presentation} for $\Pi_M$, with
$u_i=E_1\cdots E_{i-1}D_iE_{i-1}^{-1}\cdots E_1^{-1}$ and $v_i=E_1\cdots E_{i-1}E_iE_{i-1}^{-1}\cdots E_1^{-1}$
for
\begin{displaymath}
\begin{aligned}
D_1&=p_5^{-1}p_2^{-1}p_5p_1p_3p_8^{-1}p_4p_1^{-1}p_7^{-1},\\
D_2&=p_5^{-1}p_2^{-1}p_5p_1p_3p_8^{-1},\\
\end{aligned}
\
\begin{aligned}
D_3&=p_5^{-1}p_2^{-1}p_5p_1,\\
D_4&=p_5^{-1},\\
\end{aligned}
\quad\text{and}\quad
\begin{aligned}
E_1&=p_6^{-1},\\ 
E_2&=p_4p_1^{-1}p_2p_3^{-1},\\ 
\end{aligned}
\ 
\begin{aligned}
E_3&=p_3,\\          
E_4&=p_2^{-1}.\\
\end{aligned}
\end{displaymath}

The four possibilities in~Proposition~\ref{prop:typeAmirrorgroups}(a) are mutually exclusive
(see \S\ref{sec:triangledesc}). If $M$ is a mirror of type~$A$,
then by Proposition~\ref{prop:typeAmirrorgroups}(a), the image of the immersion 
$\varphi_M:\Pi_M\backslash M\to \XX$ is equal to
the image of $\varphi_{M'}$ for $M'=b(M_c)$, $b^{-1}(M_c)$, $M_c$, or~$M_{-c}$. We will denote by $C_1,C_2,C_3$ and $C_4$ respectively these images. Again, they are distinct.

\subsection{}\label{sec:triangledesc}In \S\ref{sec:MirrortypeB} and~\S\ref{sec:MirrortypeA}, we have identified 7 distinct irreducible totally geodesic curves in~$\XX$, 
4 of type $A$, and 3 of type~$B$. Just the knowledge of the indices of the groups $\Pi_M$ in $\bar\Gamma_M$ together with Lemma~\ref{lem:isotropystab}(d) and (e) 
enables us to determine the genus of the curves $\Pi_M\backslash M$.

For instance, since $\Pi_M$ has index $288$ in $\bar\Gamma_M$ when $M$ is of type $B$, and since the center of $\bar\Gamma_M$ has order $4$, the normalization $\hat E_i$ of the curve $E_i$ is an orbifold covering of degree $72=288/4$ of the orbifold $D_B\cong\P^1_\C$ endowed with three orbifold points $(P_4,P_3,P_2)$ of respective multiplicities $(2,3,12)$ hence by the Riemann-Hurwitz formula, its genus is indeed
$$
g(\hat E_i)= \frac {72}2 \left(-2 + \frac{2-1}2 + \frac{3-1}3+\frac{12-1}{12} \right)+1=4.
$$
Note that $864=4\cdot 3\cdot 72$, where 4 is the order of the reflections of type $B$ and 3 the number of curves of type $B$, so that the three possibilities in~Proposition~\ref{prop:typeBmirrorgroups}(a) are mutually exclusive.

In the same way, the normalizations of $C_1$ and $C_2$ (resp. $C_3$
and $C_4$) are orbifold coverings of degree $36$ (resp. $108$) of the
orbifold~$D_A$ whose normalization is $\P^1_\C$, endowed with three
orbifold points $(P_1,P_3,P_2)$ of respective multiplicities
$(2,4,12)$ so that $g(\hat C_1)=g(\hat C_2)=4$ and $g(\hat C_3)=g(\hat C_4)=10$. 
Here again, $864 = 3(2\cdot 36+2\cdot 108)$ where $3$ is the order of the reflections of type~$A$, hence the four possibilities
in~Proposition~\ref{prop:typeAmirrorgroups}(a) are mutually exclusive.

However, we will need to know explicit generators of the various groups $\Pi_M$ (see below).
 
\subsection{}\label{sec:self} Now, we want to find out how the curves $C_i$ and $E_i$ self intersect. The next result is a straightforward consequence of the discussion at the beginning of \S\ref{sec:summary}.
\begin{lemm}\label{lem:selfintersect}Suppose that $x\in\XX$ is the image under~$\varphi_M$
of two or more distinct elements of~$\Pi_M\backslash M$. If $M$ is of type~$B$, then $x$ must be 
one of the three points $\Pi(O)$, $\Pi(b.O)$ and~$\Pi(b^{-1}.O)$. If $M$ is of type~$A$, then
$x$ is either one of these three points or one of the 36~points $\Pi(k_i.P)$, where the
$k_i$ are as above.
If $\xi\in M$, then $\varphi_M(\Pi_M\xi)$ is one of the three points $\Pi(b^\mu.O)$, $\mu=0,1,-1$, if and only if $\xi$ is in
the $\bar\Gamma$-orbit of~$O$, and it is one of the 36~points $\Pi(k_i.P)$ if and only if $\xi$ is in
the $\bar\Gamma$-orbit of~$P$. 
\end{lemm}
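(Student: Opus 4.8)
The plan is to convert a self-intersection of the immersed curve $\varphi_M(\Pi_M\backslash M)$ into the existence of two \emph{distinct} mirrors of the same type passing through a single point, and then to read off from the local orbifold data exactly where in $\twoball$ this can occur. I would first unwind the definition of $\varphi_M$: since $\varphi_M(\Pi_M\xi)=\varphi(\xi)=\Pi\xi$ for $\xi\in M$, two distinct points $\Pi_M\xi_1\neq\Pi_M\xi_2$ of $\Pi_M\backslash M$ have a common image $x$ exactly when $\Pi\xi_1=\Pi\xi_2$, that is, when $\xi_2=\pi\xi_1$ for some $\pi\in\Pi$. The assumption $\Pi_M\xi_1\neq\Pi_M\xi_2$ forces $\pi\notin\Pi_M=\Pi\cap\bar\Gamma_M$, and in particular $\pi(M)\neq M$, for otherwise $\pi\in\bar\Gamma_M\cap\Pi=\Pi_M$.

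Setting $\eta=\xi_2=\pi\xi_1$, I note that $\eta\in M$ (as $\xi_2\in M$) and $\eta\in\pi(M)$ (as $\xi_1\in M$), so $M$ and $\pi(M)$ are two distinct mirrors through $\eta$, of the same type as $M$, since conjugation by $\pi\in\bar\Gamma$ sends a complex reflection to one of the same order and hence preserves $\cM_A$ and $\cM_B$. Thus $|\cM_B(\eta)|\geqslant2$ when $M$ is of type $B$, and $|\cM_A(\eta)|\geqslant2$ when $M$ is of type $A$. Reading the table recording $|\bar\Gamma_\xi|$, $|\cM_A(\xi)|$ and $|\cM_B(\xi)|$ (just after Proposition~\ref{prop:torsionelts}), the only points carrying two or more mirrors of type $B$ are those above $P_2$, whereas the only points carrying two or more mirrors of type $A$ are those above $P_1$ or $P_2$. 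Hence $\eta$ lies in the $\bar\Gamma$-orbit of $O$ in the type-$B$ case, and in the $\bar\Gamma$-orbit of $O$ or of $P$ in the type-$A$ case. Since $x=\Pi\eta$, the descriptions $\projmap^{-1}(P_2)=\{\Pi(O),\Pi(b.O),\Pi(b^{-1}.O)\}$ and $\projmap^{-1}(P_1)=\{\Pi(k_i.P):1\leqslant i\leqslant36\}$ from the beginning of \S\ref{sec:summary} yield the two asserted lists.

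The ``if and only if'' statement I would then dispatch at the level of the orbifold projection $\projmap$. For $\xi\in M$ one has $\projmap(\varphi_M(\Pi_M\xi))=\projmap(\Pi\xi)=\bar\Gamma(\xi)\in R$, so $\varphi_M(\Pi_M\xi)\in\projmap^{-1}(P_2)$ if and only if $\bar\Gamma(\xi)=P_2$, i.e. if and only if $\xi$ lies in the $\bar\Gamma$-orbit of $O$ (which is $\cT$); the same argument with $P_1$, $P$ and $\cP$ gives the other equivalence. Combined with the two fibre descriptions recalled above, this is precisely the claimed characterization.

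The step demanding the most care is not computational but a matter of bookkeeping: one must verify that the two branches produced are mirrors of the \emph{same} type (so as to consult the correct column of the table) and are genuinely distinct, and that the transverse meeting at $P_3$, where a single type-$A$ mirror and a single type-$B$ mirror cross, is correctly excluded, since it is an intersection \emph{between} the two families rather than a self-intersection of either individual curve. All remaining input is a direct appeal to the local orbifold data gathered in \S\ref{sec:descorb} and the stabilizer table.
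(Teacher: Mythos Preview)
Your argument is correct and is precisely the detailed version of what the paper leaves implicit: the paper merely records that the lemma ``is a straightforward consequence of the discussion at the beginning of \S\ref{sec:summary}'' and gives no further proof. Your translation of a self-intersection into two distinct mirrors of the same type through a single point of~$\twoball$, followed by inspection of the columns $|\cM_A(\xi)|$ and $|\cM_B(\xi)|$ in the stabilizer table, is exactly the intended mechanism, and your handling of the ``if and only if'' clause via the factorization $\projmap\circ\varphi_M(\Pi_M\xi)=\bar\Gamma(\xi)$ is clean and correct.
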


\begin{lemm}\label{lem:preimagecount}
\begin{itemize}
\item[(i)] For each mirror $M$ of type~$B$, there are exactly six distinct $\Pi_M\xi\in\Pi_M\backslash M$
such that $\xi\in M$ is in the $\bar\Gamma$-orbit of~$O$.
\item[(ii)] Suppose that $M$ is a mirror of type~$A$, and that there
is a $\pi\in\Pi$ such that $\pi(M)=M_c$ or~$M_{-c}$,
respectively such that $\pi(M)=b(M_c)$ or~$b^{-1}(M_c)$. There are exactly~9 (respectively~3) distinct $\Pi_M\xi\in\Pi_M\backslash M$
such that $\xi\in M$ is in the $\bar\Gamma$-orbit of~$O$. There are exactly~54 (respectively~18) distinct 
$\Pi_M\xi\in\Pi_M\backslash M$ such that $\xi\in M$ is in the $\bar\Gamma$-orbit of~$P$.
\end{itemize}
\end{lemm}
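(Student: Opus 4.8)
The lemma counts, for each immersed totally geodesic curve $\Pi_M\backslash M$, how many of its points map to the two special $\bar\Gamma$-orbits (that of $O$, lying over $P_2$, and that of $P$, lying over $P_1$). The natural framework is the orbit-counting principle already implicit in \S\ref{sec:summary}: a point $\Pi_M\xi$ on the curve $\Pi_M\backslash M$ corresponds to a $\Pi_M$-orbit on $M$, and the points $\xi\in M$ lying in a fixed $\bar\Gamma$-orbit (say $\bar\Gamma.O$) are exactly the points $\bar\Gamma_M$-equivalent to elements of $M\cap\bar\Gamma.O$. So the count we want is the number of $\Pi_M$-orbits inside the set $M\cap\bar\Gamma.O$ (respectively $M\cap\bar\Gamma.P$). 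The strategy is to compute this as an index computation in the relevant triangle-type quotients, using the explicit descriptions of $\bar\Gamma_0$, $\bar\Gamma_c$ and their subgroups $\Pi_0$, $\Pi_c$ from Propositions~\ref{prop:pi0prop} and~\ref{prop:picprop}, together with the stabilizer orders recorded in the table of \S\ref{sec:matrix}.

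\textbf{Key steps.}
First I would reduce to the representative mirrors: by Proposition~\ref{prop:typeBmirrorgroups}(c) and Proposition~\ref{prop:typeAmirrorgroups}(c),(d), every mirror $M$ is carried by an element of $\Pi$ (or at worst conjugated) to one of the standard mirrors $M_0$, $b^{\pm1}(M_c)$, $M_c$, $M_{-c}$, and the number of special points on $\Pi_M\backslash M$ is a conjugation-invariant, so it suffices to treat these cases. Second, fix the orbit $\bar\Gamma.O$: by Lemma~\ref{lem:selfintersect}, the points of $M$ in $\bar\Gamma.O$ are exactly the points lying over $P_2\in R$, so under the orbifold covering $\Pi_M\backslash M\to D_B$ (resp. $\to D_A$) described in \S\ref{sec:triangledesc} they are precisely the fibre over the orbifold point $P_2$. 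The count is then a local degree computation: the covering $\hat E_i\to D_B$ has degree $72$ and $P_2$ carries multiplicity $12$, so the fibre has $72/12=6$ points, giving part~(i). For type $A$, the covering $\hat C_i\to D_A$ has degree $36$ (for $C_1,C_2$) or $108$ (for $C_3,C_4$) with $P_2$ of multiplicity $12$, yielding $36/12=3$ or $108/12=9$ points over $P_2$ — matching the two cases of~(ii) for the $O$-orbit. Third, repeat with the orbit $\bar\Gamma.P$ lying over $P_1$, which has multiplicity $2$ on $D_A$ and does not meet $D_B$: the fibre over $P_1$ then has $36/2=18$ or $108/2=54$ points, which is exactly the $P$-orbit count in~(ii). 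One must be careful that ramification over these orbifold points is \emph{total} with the stated local degree, which follows because the stabilizer $\bar\Gamma_\xi$ acts on the set of mirrors through $\xi$ and $\Pi$ is torsion-free, so $\Pi_M$ meets each isotropy group trivially; this is the content that makes the naive index division give an exact integer count.

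\textbf{Main obstacle.}
The delicate point is not the arithmetic $72/12=6$ etc., but justifying that these are genuine point counts on the \emph{immersed} curve $\varphi_M(\Pi_M\backslash M)$ rather than on an abstract orbifold fibre: I must confirm that distinct $\Pi_M$-orbits on $M\cap\bar\Gamma.O$ really do give distinct points $\Pi_M\xi$ of $\Pi_M\backslash M$, and that no two of them are identified only after pushing forward by $\varphi_M$ (the self-intersection phenomenon of Lemma~\ref{lem:selfintersect}). The clean way to settle this is to verify the local degrees directly in $\bar\Gamma_0$ and $\bar\Gamma_c$ with Magma: using the explicit generators of $\Pi_0$ (Proposition~\ref{prop:pi0prop}) and $\Pi_c$ (Proposition~\ref{prop:picprop}), one computes the number of $\Pi_0$- (resp. $\Pi_c$-) orbits on the finite set $\bar\Gamma_0.\,\xi_O/\bar\Gamma_{0,\xi_O}$ of points of $M_0$ (resp. $M_c$) in $\bar\Gamma.O$ or $\bar\Gamma.P$, by intersecting cosets. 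This turns each assertion into a finite-index coset computation of the type used repeatedly in \S\ref{sec:Pi} and \S\ref{sec:MirrortypeB}, and the Riemann--Hurwitz bookkeeping of \S\ref{sec:triangledesc} serves as an independent check that the totals are consistent.
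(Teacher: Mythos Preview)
Your approach is correct and is exactly the one the paper uses: the count is simply the degree of the orbifold cover $\Pi_M\backslash M\to D_B$ (resp.\ $D_A$) divided by the orbifold weight at $P_2$ (resp.\ $P_1$), giving $72/12=6$, $108/12=9$, $36/12=3$, $108/2=54$, $36/2=18$, with full ramification guaranteed because $\Pi_M$ is torsion-free.

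Your ``main obstacle'' is a phantom, however. The lemma counts points of $\Pi_M\backslash M$, not of the immersed image $\varphi_M(\Pi_M\backslash M)\subset X$; the self-intersection phenomenon of Lemma~\ref{lem:selfintersect} is therefore irrelevant here (it only matters later, in Propositions~\ref{prop:zeroimmersions} and~\ref{prop:intersectioncount}, when one asks \emph{which} of the three points $\Pi(b^\mu.O)$ each of these six points maps to). And the first worry---that distinct $\Pi_M$-orbits might fail to give distinct points of $\Pi_M\backslash M$---is tautological. So the proposed Magma verification is unnecessary: the orbifold Riemann--Hurwitz bookkeeping of \S\ref{sec:triangledesc} already is the proof, and the paper's own argument is accordingly two sentences long.
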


\begin{proof}
(i) This follows from the description given in \S\ref{sec:triangledesc}. Indeed, the orbifold point $P_2$ on $D_B$ has weight $12$ so that it has $6=72/12$ preimages in $\hat E_i$.

(ii) In the same way, the orbifold point $P_2$ on $D_A$ has weight $12$ so that it has $9=108/12$ preimages in $\hat C_3$ and $\hat C_4$ (resp. $3=36/12$  preimages in $\hat C_1$ and $\hat C_2$). Also, the orbifold point $P_1$ has weight $2$ so that it has $54=108/2$ preimages in $\hat C_3$ and $\hat C_4$ (resp. $18=36/2$  preimages in $\hat C_1$ and $\hat C_2$).
\end{proof}

For any mirror~$M$, and any $\mu\in\{0,1,-1\}$, let
\begin{displaymath}
n_\mu(M)=\sharp\{\Pi_M\xi\in\Pi_M\backslash M:\varphi_M(\Pi_M\xi)=\Pi(b^\mu.O)\}.
\end{displaymath}
By Lemma~\ref{lem:preimagecount}(i), $n_0(M)+n_1(M)+n_{-1}(M)=6$ if $M$ is of type~$B$.

\begin{prop}\label{prop:zeroimmersions}If $M$ is a mirror of type~$B$, then 
according to the three possibilities in Proposition~\ref{prop:typeBmirrorgroups}(a),
$(n_0(M),n_1(M),n_{-1}(M))$ is either $(3,1,2)$, $(1,4,1)$ or~$(2,1,3)$, respectively.
\end{prop}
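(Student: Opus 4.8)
The plan is to compute, for each of the three representative mirrors $M_0$, $M_1$ and $M_\infty$, exactly how many of the six $\bar\Gamma$-translates-of-$O$ points on $M$ map under $\varphi_M$ to each of the three points $\Pi(b^\mu.O)$ on $X$. By Lemma~\ref{lem:preimagecount}(i) there are exactly six such points $\Pi_M\xi$ on $\Pi_M\backslash M$, and by Lemma~\ref{lem:selfintersect} each of them must map to one of $\Pi(O)$, $\Pi(b.O)$, $\Pi(b^{-1}.O)$; so the three counts $(n_0(M),n_1(M),n_{-1}(M))$ are a composition of~$6$, and the task is purely to determine the distribution.

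The key computational step is to make explicit the set $S_M\subset M$ of points in the $\bar\Gamma$-orbit of~$O$ lying on a given mirror~$M$, together with a group element carrying each such point back to~$O$. Concretely I would start from $M_0=\{(z,w)\in\twoball:z=0\}$ and, using Lemma~\ref{lem:Korbitc}, identify the $K$-structure: the point $O$ itself lies on $M_0$, and the other points of $\cT\cap M_0$ are obtained by the action of the stabilizer $\bar\Gamma_0$ (the $(2,3,12)$-triangle group extension described in~\S\ref{sec:summary}). For each representative point $\xi_\nu\in S_M$ I would find an explicit $\gamma_\nu\in\bar\Gamma$ with $\gamma_\nu(\xi_\nu)=O$, and then record the coset $\Pi\,b^{\mu_\nu}$ determined by $\gamma_\nu$: since $\varphi_M(\Pi_M\xi_\nu)=\Pi(\gamma_\nu^{-1}.O)$, the value of $\mu_\nu\in\{0,1,-1\}$ is read off from which double coset $\Pi\gamma_\nu^{-1} K$ the element falls into, using that $b^0k, b^1k, b^{-1}k$ ($k\in K$) are coset representatives of~$\Pi$ as in Theorem~\ref{thm:pigp}. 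Magma then tallies the six values of $\mu_\nu$ for each of $M_0$, $M_1$, $M_\infty$.

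Two bookkeeping points must be handled carefully. First, I must count points on $\Pi_M\backslash M$, not on $M$ itself, so distinct $\xi_\nu$ lying in the same $\Pi_M$-orbit should be identified; this is where the explicit generators of $\Pi_0$ (and, via $k_1, k_\infty$, of $\Pi_1$ and $\Pi_\infty$) from Proposition~\ref{prop:pi0prop} and~\S\ref{sec:MirrortypeB} are used, checking orbit membership by the same index-in-$\bar\Gamma$ trick employed throughout. Second, the three mirrors $M_1$ and $M_\infty$ are not fixed by the $j^4$-action, but I can exploit equivariance: since $j^4$ normalizes $\Pi$ and permutes the three points $\Pi(b^\mu.O)$ according to the known action, the triples for $M_1$ and $M_\infty$ should be consistent with the $j^4$-symmetry, giving an internal check that the answers are $(1,4,1)$ and $(2,1,3)$ as claimed.

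The main obstacle I expect is the reliable enumeration of $S_M$ together with correct coset identification: one must be certain that all six $\bar\Gamma$-orbit-of-$O$ points on each $\Pi_M\backslash M$ have been found (no duplicates under $\Pi_M$, none missed), and that the assignment of each to the correct $\Pi(b^\mu.O)$ is unambiguous. This is fundamentally a finite computation in $\bar\Gamma$ with the given presentation, verified in Magma, rather than a conceptual difficulty; the symmetry consistency check and the constraint $n_0+n_1+n_{-1}=6$ from Lemma~\ref{lem:preimagecount}(i) provide the safeguards that the tally is correct.
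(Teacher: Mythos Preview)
Your approach is essentially the paper's: enumerate the six points of $\Pi_M\backslash M$ lying over $P_2$ and read off, for each, which double coset $\Pi\gamma K\in\{\Pi K,\Pi bK,\Pi b^{-1}K\}$ it lands in. The paper phrases this more cleanly as a double-coset count---the six points are exactly the six double cosets $\Pi_M\gamma(K\cap\bar\Gamma_M)$ in $\bar\Gamma_M$, and one simply computes the image of each representative in $\Pi\backslash\bar\Gamma/K$---which avoids ever naming points of~$\twoball$ or hunting for elements carrying them to~$O$.

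One correction to your proposed consistency check: $j^4$ acts on~$\twoball$ by $(z,w)\mapsto(\zeta^4 z,\zeta^4 w)$, which preserves every $M_\alpha$ (including $M_1$ and $M_\infty$), and it fixes each of the three points $\Pi(b^\mu.O)$ (these are precisely the fixed points $O_1,O_2,O_3$ of $\sigma$, cf.\ Proposition~\ref{prop:fixpoints}). So the $j^4$-equivariance gives no nontrivial relation between the triples for the three mirrors; that safeguard is vacuous. The genuine safeguard is the one you already have, namely $n_0+n_1+n_{-1}=6$ from Lemma~\ref{lem:preimagecount}(i).
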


\begin{proof}This is easily seen by choosing representatives $\gamma\in\bar\Gamma_M$ of the 6 
distinct double cosets $\Pi_M\gamma(K\cap \bar\Gamma_M)$ for 
$M=M_0$, $M_1$ and $M_\infty$ and then computing their images $\Pi\gamma K$ in $\Pi\backslash\bar\Gamma/K=\{\Pi K,\Pi b K,\Pi b^{-1} K\}$.
\end{proof}

That $n_0(M_0)$, $n_0(M_1)$, $n_0(M_\infty)$ are distinct gives another proof that the
images of~$\varphi_{M_0}$, $\varphi_{M_1}$ and~$\varphi_{M_\infty}$ are distinct and that the 
cases in Proposition~\ref{prop:typeBmirrorgroups}(a) are mutually exclusive.

We now calculate $n_\nu(M)$, $\nu=0,1,-1$, for mirrors~$M$ of type~$A$, as well as the
numbers 
\begin{displaymath}
m_i(M)=\sharp\{\Pi_M\xi\in\Pi_M\backslash M:\varphi_M(\Pi_M\xi)=\Pi(k_i.P)\}
\end{displaymath}
for $i=1,\ldots,36$ (recall that the $k_i$'s were defined at the beginning of \S\ref{sec:summary}). If $\pi\in\Pi$ and $M'=\pi(M)$, then 
$n_\nu(M')=n_\nu(M)$ and $m_i(M')=m_i(M)$ for each~$\nu$ and~$i$, and
so by Proposition~\ref{prop:typeAmirrorgroups}(a), we need only do the calculation
for $M_c$, $M_{-c}$, $b(M_c)$ and~$b^{-1}(M_c)$.
\begin{prop}\label{prop:intersectioncount}For mirrors $M$ of type~$A$, $(n_0(M),n_1(M),n_{-1}(M))$ is
$(4,3,2)$ for the first two cases in Proposition~\ref{prop:typeAmirrorgroups}(a), and~$(0,1,2)$ for 
the other two.

For a suitable ordering of the $k_i$, the numbers $m_i=m_i(M)$ are as follows:
\begin{center}
\begin{tabular}[t]{|c|c|c|c|c|}\hline
\vbox to 2.0ex{}$M$&$m_1\ldots,m_{12}$&$m_{13},\ldots,m_{18}$&$m_{19},\ldots,m_{24}$&$m_{25},\ldots,m_{36}$\\[0.25ex]\hline
\vbox to 2.0ex{}$M_c$&2&0&3&1\\[0.25ex]\hline
\vbox to 2.0ex{}$M_{-c}$&2&3&0&1\\[0.25ex]\hline
\vbox to 2.0ex{}$b(M_c)$&0&0&1&1\\[0.25ex]\hline
\vbox to 2.0ex{}$b^{-1}(M_c)$&0&1&0&1\\[0.25ex]\hline
\end{tabular}
\end{center}
\end{prop}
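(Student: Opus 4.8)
\textbf{Proof proposal for Proposition~\ref{prop:intersectioncount}.}

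The plan is to reduce every number $n_\nu(M)$ and $m_i(M)$ to a purely group-theoretic double-coset count, exactly in the spirit of the proof of Proposition~\ref{prop:zeroimmersions}. By Lemma~\ref{lem:selfintersect}, a point $\Pi_M\xi\in\Pi_M\backslash M$ maps under $\varphi_M$ to one of the three points $\Pi(b^\mu.O)$ precisely when $\xi$ lies in the $\bar\Gamma$-orbit of~$O$, and to one of the $36$ points $\Pi(k_i.P)$ precisely when $\xi$ lies in the $\bar\Gamma$-orbit of~$P$. So I first fix $M=M_c$, whose stabilizer $\bar\Gamma_c$ and whose subgroup $\Pi_c$ of index~$324$ are both explicitly presented in \S\ref{sec:MirrortypeA}. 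The points of $M_c$ in the $\bar\Gamma$-orbit of~$O$ that survive in $\Pi_c\backslash M_c$ correspond to double cosets $\Pi_c\backslash\{\gamma\in\bar\Gamma_c:\gamma(\xi_0)\in\bar\Gamma.O\}/(\text{stabilizer in }\bar\Gamma_c)$; concretely, since $O\in\cT$ has stabilizer $\bar\Gamma_O=K$ of order~$288$ and $\bar\Gamma_c\cap\bar\Gamma_O$ is the stabilizer inside $\bar\Gamma_c$ of the point $O\in M_c$, the relevant index count gives the total $9=324\cdot(\text{center factor})/(\ldots)$ preimages promised by Lemma~\ref{lem:preimagecount}(ii), and these nine split among the three target points $\Pi(b^\mu.O)$ according to where each representative lands in $\Pi\backslash\bar\Gamma/K=\{\Pi K,\Pi bK,\Pi b^{-1}K\}$.

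Concretely I would proceed as follows. First I choose coset representatives $\gamma_1,\dots,\gamma_9$ in $\bar\Gamma_c$ for the $\Pi_c$-orbits of points of $M_c$ lying over~$P_2$ (found by Magma, using the presentations of $\bar\Gamma_c$ and of $\Pi_c$ from Proposition~\ref{prop:picprop}), and similarly $\delta_1,\dots,\delta_{54}$ for those lying over~$P_1$; by Lemma~\ref{lem:preimagecount}(ii) these are the right numbers. For each $\gamma_s$, the image point is $\varphi_{M_c}(\Pi_c\gamma_s^{-1}(O))=\Pi(\gamma_s^{-1}.O)$, so I have Magma reduce $\gamma_s^{-1}$ modulo $\Pi$ on the left and modulo $K$ on the right to decide which of $\Pi K$, $\Pi bK$, $\Pi b^{-1}K$ it represents, using the $864$ explicit coset representatives $b^\mu k$ from the proof of Theorem~\ref{thm:pigp}. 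Tallying these nine outcomes yields $(n_0,n_1,n_{-1})=(4,3,2)$ for $M_c$. For the $m_i$ I do the analogous reduction of the $54$ elements $\delta_t^{-1}$, but now sorting the fixed points of $P$-type into the $36$ classes $\Pi(k_i.P)$; this requires first fixing the labelling of the $k_i$ (from the beginning of \S\ref{sec:summary}) and then matching $\Pi\delta_t^{-1}k_i^{-1}=\Pi$ against each $i$. The cases $M_{-c}$, $b(M_c)$, $b^{-1}(M_c)$ are handled identically, using $k_{-c}=v^2$ to transport the $M_c$ data to $M_{-c}$ (which merely permutes the answer, explaining why $(4,3,2)$ recurs and the $m_i$-columns for the $P_1$-type swap), and using the index-$3$ inclusion $g\Pi_cg^{-1}\subset\Pi_M$ from Proposition~\ref{prop:typeAmirrorgroups}(d) to pass to $b(M_c)$ and $b^{-1}(M_c)$, where Lemma~\ref{lem:preimagecount}(ii) now predicts only $3$ and $18$ preimages.

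The main obstacle will be the bookkeeping for the $m_i$ table rather than any conceptual difficulty: the intersection numbers over~$O$ involve only three target classes and fall out quickly, but distributing $54$ (respectively $18$) fixed points of type~$P$ among the $36$ points $\Pi(k_i.P)$ demands a consistent and explicit enumeration of the $k_i$, and the stated answer (values in $\{0,1,2,3\}$ grouped in blocks of sizes $12,6,6,12$) depends delicately on that ordering. I would therefore pin down the $k_i$ once and for all, verify $\sum_i m_i(M_c)=12\cdot2+6\cdot0+6\cdot3+12\cdot1=54$ and $\sum_i m_i(b(M_c))=6\cdot1+12\cdot1=18$ as arithmetic sanity checks against Lemma~\ref{lem:preimagecount}(ii), and confirm that the $M_{-c}$ row is obtained from the $M_c$ row by the permutation of the $k_i$ induced by left multiplication by $v^2=k_{-c}$. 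With the labelling fixed and these totals matching, the double-coset computations in Magma then produce the entries directly, completing the proof.
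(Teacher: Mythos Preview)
Your proposal is correct and follows essentially the same route as the paper: both reduce $n_\nu(M)$ and $m_i(M)$ to double-coset counts, choosing representatives $\gamma\in\bar\Gamma_M$ for the $\Pi_M$-orbits of points of~$M$ lying over~$P_2$ (respectively~$P_1$) and then reading off their images in $\Pi\backslash\bar\Gamma/K$ (respectively $\Pi\backslash\bar\Gamma/\bar\Gamma_P$) via the explicit coset representatives $b^\mu k$.

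Two small remarks. First, your transport heuristic for $M_{-c}$ is not quite as automatic as you suggest: conjugation by $v^2=k_{-c}$ does carry the nine $\Pi_c$-orbits to the nine $\Pi_{-c}$-orbits, but left multiplication by $v^2$ is not well defined on $\Pi\backslash\bar\Gamma/K$, so the coincidence $(n_0,n_1,n_{-1})=(4,3,2)$ for both $M_c$ and $M_{-c}$ is something the computation confirms rather than something the transport explains a priori; the paper simply runs the double-coset count separately for each~$M$. Second, for $M=b^{\pm1}(M_c)$ the paper works directly with $\Pi_M$ (for which explicit generators were produced just after Proposition~\ref{prop:typeAmirrorgroups}) and the conjugated stabilizers $\bar\Gamma_M\cap b^jKb^{-j}$ and $\bar\Gamma_M\cap b^j\bar\Gamma_Pb^{-j}$, rather than going through the index-$3$ inclusion $b^j\Pi_cb^{-j}\subset\Pi_M$; your alternative is valid but requires the extra step of identifying which triples of $b^j\Pi_cb^{-j}$-orbits coalesce under~$\Pi_M$.
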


\begin{proof}
As in Proposition~\ref{prop:zeroimmersions}, to get the
numbers $n_\nu(M)$, we choose representatives $\gamma\in\bar\Gamma_M$ of the 9 (resp.~3) distinct
double cosets $\Pi_M\gamma(\bar\Gamma_M\cap K)$
(respectively, $\Pi_M\gamma(\bar\Gamma_M\cap b^j Kb^{-j})$)
for $M=M_c$ and $M_{-c}$ (respectively, $M=b^j(M_c)$, $j=1,-1$) and then
compute their images $\Pi\gamma K$ (respectively, $\Pi\gamma b^jK$) in $\Pi\backslash\bar\Gamma/K$.

To compute the numbers $m_i(M)$ for $M=M_\alpha$, $\alpha=c,-c$, (respectively, $M=b^j(M_c)$, $j=1,-1$), we choose representatives
$\gamma\in\bar\Gamma_M$ of the 54 (respectively, 18) distinct double cosets $\Pi_M\gamma(\bar\Gamma_M\cap k_\alpha\bar\Gamma_Pk_\alpha^{-1})$,
(respectively, $\Pi_M\gamma(\bar\Gamma_M\cap b^j\bar\Gamma_Pb^{-j})$) and compute their images $\Pi\gamma k_\alpha\bar\Gamma_P$ 
(respectively, $\Pi\gamma b^j\bar\Gamma_P$) in~$\Pi\backslash\bar\Gamma/\bar\Gamma_P$. 
\end{proof}


%
%
%
\subsection{}\label{sec:config} The knowledge of the numbers $n_\nu(M)$ and $m_i(M)$ determines how our seven totally geodesic curves self-intersect. In order to determine how two distinct such curves intersect, we also need to know which of the $72$ points of $\projmap^{-1}(P_3)$ each of them contains. Using exactly the same method as in Propositions~\ref{prop:zeroimmersions}~and~\ref{prop:intersectioncount}, we obtain

\begin{prop}\label{prop:xi12orbit1}There are exactly 72 distinct points in $\projmap^{-1}(P_3)$. The set of these points
may be partitioned into three subsets of size~24, consisting of the points in the
images of~$M_0$, $M_1$ and~$M_\infty$, respectively. For $\alpha=0,1,\infty$, the set of
24 points belonging to the image of~$M_\alpha$ is
partitioned into sets of $n_1$, $n_2$, $n_3$ and~$n_4$ points in the images of~$M_c$, $M_{-c}$, $b(M_c)$
and~$b^{-1}(M_c)$, respectively, where $(n_1,n_2,n_3,n_4)=(6,6,6,6)$ for $\alpha=0$,
$(n_1,n_2,n_3,n_4)=(9,9,3,3)$ for $\alpha=1$, 
and $(n_1,n_2,n_3,n_4)=(12,12,0,0)$ for $\alpha=\infty$.
\end{prop}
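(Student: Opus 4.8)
The plan is to count, for each of the three mirrors $M_\alpha$ of type~$B$ ($\alpha=0,1,\infty$), exactly which points of $\projmap^{-1}(P_3)$ lie on its image, and simultaneously to record, for each such point, which type-$A$ curve it belongs to. The combinatorial skeleton is already fixed by the earlier results: $\projmap^{-1}(P_3)$ has $72=864/12$ points since $|\bar\Gamma_\xi|=12$ at a point over~$P_3$, and each mirror of type~$B$ passes through $24$ of them. Indeed, from Lemma~\ref{lem:isotropystab} the stabilizer of a point over~$P_3$ is $\Z_3\times\Z_4$ of order~12 containing exactly one mirror of each type; restricting to~$\bar\Gamma_M$ for $M$ of type~$B$ (center of order~$4$, degree-$72$ orbifold cover of $D_B$), the orbifold point $P_3$ has weight~$3$, giving $24=72/3$ preimages on each $\hat E_i$. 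Adding $3\cdot24=72$ accounts for all the points, which both confirms the partition into three subsets of size~24 and re-proves that the images of $M_0$, $M_1$, $M_\infty$ are disjoint at level~$P_3$.

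The mechanism, exactly as in Propositions~\ref{prop:zeroimmersions} and~\ref{prop:intersectioncount}, is a double-coset computation. First I would fix, for each $\alpha\in\{0,1,\infty\}$, the subgroup $H_\alpha=\bar\Gamma_M\cap(\text{stabilizer of the chosen point of }\cM_B(\xi)\text{ over }P_3)$, equivalently $\bar\Gamma_M\cap g\langle t_{12}\rangle g^{-1}$ type data coming from the $(2,3,12)$-structure of $\bar\Gamma_0$; the $24$ distinct double cosets $\Pi_M\gamma H_\alpha$ parametrize the $24$ points of $\projmap^{-1}(P_3)$ on the image of~$M_\alpha$. I would then push each representative forward and read off, in $\Pi\backslash\bar\Gamma/\bar\Gamma_\eta$ for $\eta$ a point over~$P_3$, which of the $72$ points it hits, and cross-reference with the analogous double-coset count for the four type-$A$ mirrors $M_c,M_{-c},b(M_c),b^{-1}(M_c)$. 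Because every point over~$P_3$ lies on exactly one mirror of type~$A$ and one of type~$B$ (transverse intersection, local group $\Z_3\times\Z_4$), each of the $24$ points on $M_\alpha$'s image is assigned to precisely one of the $C_i$, and the four resulting tallies $(n_1,n_2,n_3,n_4)$ must sum to~$24$. These are the numbers Magma returns, and the claimed triples $(6,6,6,6)$, $(9,9,3,3)$, $(12,12,0,0)$ are then read off directly.

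As a consistency check (and a way to pin down the numbers without blindly trusting the machine) I would verify the column totals against the type-$A$ counts already available: summing each $n_i$ over $\alpha=0,1,\infty$ must reproduce the total number of points over~$P_3$ lying on $C_1,C_2,C_3,C_4$, namely $6+9+12=27$ on each of $C_3,C_4$ and $6+3+0=9$ on each of $C_1,C_2$, matching the degrees $108$ and $36$ of the respective orbifold covers of $D_A$ (where $P_3$ has weight~$4$, giving $27=108/4$ and $9=36/4$). This cross-check is what makes the bookkeeping rigid: the row sums are forced to be~$24$ by Lemma~\ref{lem:preimagecount}-style weight counting, the column sums are forced by the type-$A$ degrees, and together with the explicit generators of the $\Pi_M$ from \S\ref{sec:MirrortypeB}--\S\ref{sec:MirrortypeA} they determine the table uniquely.

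The main obstacle is purely computational rather than conceptual: one must correctly identify the finite stabilizer subgroup $H_\alpha$ sitting inside $\bar\Gamma_M$ that corresponds to a point of type~$P_3$ (as opposed to $P_2$ or generic), set up the double-coset enumeration $\Pi_M\backslash\bar\Gamma_M/H_\alpha$ with the Magma-given presentations, and then track each representative through to its image in $\Pi\backslash\bar\Gamma/\bar\Gamma_\eta$ without mislabeling which of the $72$ points is hit. The delicate point is ensuring that the same point of $\projmap^{-1}(P_3)$ is consistently indexed when approached from a type-$B$ mirror versus a type-$A$ mirror, so that the intersection pattern — not merely the point counts — is recorded correctly; this is exactly the data needed later for the configuration of the totally geodesic curves in \S\ref{sec:config}.
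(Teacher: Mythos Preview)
Your proposal is correct and follows essentially the same approach as the paper, which simply says ``using exactly the same method as in Propositions~\ref{prop:zeroimmersions} and~\ref{prop:intersectioncount}'': double-coset enumeration in $\bar\Gamma_M$ via Magma, pushed forward to $\Pi\backslash\bar\Gamma/\bar\Gamma_\eta$. Your additional consistency checks (row sums~$24$, column sums $27,27,9,9$ matching $108/4$ and $36/4$) are a nice sanity verification not spelled out in the paper; one small slip is that the order-$3$ stabilizer on a type-$B$ mirror comes from the $(2,3,12)$ presentation of~$\bar\Gamma_0$ (the $s_3$-conjugate data), not from~$t_{12}$, which belongs to~$\bar\Gamma_c$.
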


\subsection{}\label{sec:imhone}  We have seen (cf. \S\ref{sec:Pi}) that $H_1(X,\bZ)=\bZ e_1+\bZ e_2\cong \bZ^2$ in terms of a basis $e_1$ and~$e_2$.
For each of the genus~4 curves $D=E_i$, $i=1,2,3$, and $D=C_j$, $j=1,2$, a presentation~\eqref{eq:rank4presentation} 
can be given for~$\pi_1(\hD)$. Abusing notation, we denote by $f:H_1(\hD,\bZ)\rightarrow H_1(X,\bZ)\cong\bZ^2$ the
homomorphism induced by the normalization of the immersed image of~$D$ in~$X$.
For $E_1$, $E_2$ and~$C_1$ (which is all we need for later computations) we have given
generators $u_i$, $v_i$, $i=1,\dots,4$, of~$\pi_1(\hD)$ explicitly as words in the generators $a_1$, $a_2$ 
and~$a_3$ of~$\Pi$. So it is routine to compute their images $f(u_i)$, $f(u_j)$ in~$H_1(X,\bZ)$ in terms of $e_1$, $e_2$.  
We obtain:
{\small
\begin{center}
\begin{tabular}[t]{|c|c|c|c|c|c|c|c|c|}\hline
\vbox to 2.25ex{}$D$&$f(u_1)$&$f(v_1)$&$f(u_2)$&$f(v_2)$&$f(u_3)$&$f(v_3)$&$f(u_4)$&$f(v_4)$\hfil\\[0.25ex]\hline
\vbox to 2.25ex{}$E_1$&$(-5,-2)$&$(-2,7)$&$(-2,1)$&$(0,0)$&$(1,4)$&$(3,-6)$&$(2,5)$&$(-1,-4)$\\[0.25ex]\hline
\vbox to 2.25ex{}$E_2$&$(-1,2)$&$(2,-1)$&$(-2,1)$&$(0,0)$&$(-3,0)$&$(-1,2)$&$(-2,1)$&$(3,0)$\\[0.25ex]\hline
\vbox to 2.25ex{}$C_1$&$(0,-2)$&$(-2,0)$&$(-4,0)$&$(0,2)$&$(-4,2)$&$(4,0)$&$(2,0)$&$(0,-2)$\\[0.25ex]\hline
\end{tabular}
\end{center}
}
Of course, we can also compute the image under~$f$ of the generators of the fundamental group of the genus~10 curves $\hat C_3$ and $\hat C_4$.


\section{Picard number}

\begin{lemm}\label{lem:totalgeod}
Suppose $D$ is a reduced (not necessarily irreducible) totally geodesic curve on a smooth complex two-ball quotient $X$
self-intersecting only at $P_1,\dots, P_k$ with simple multiplicities given by
$(b_1,\cdots, b_k)$ and let us denote by $D_i$ ($i=1,\dots,n$) its irreducible components, $\hD_i$ their normalization. Let $\nu:\hD=\cup_i\hD_i\rightarrow D$ be the normalization of $D$.  Then 
$$
K_X\cdot D=3\sum_{i=1}^n(g(\hD_i)-1)\ \ \mbox{and}\ \ D\cdot D=\frac12 e(\hD)+\tdelta, \ \ \ \mbox{where} \ \ \tdelta=\sum_{i=1}^k b_i(b_i-1)
$$
and $e(\hD)$ is the Euler characteristic of $\hD$.
\end{lemm}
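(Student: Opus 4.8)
The plan is to compute both intersection numbers by passing to the ball cover and using that $D$ is totally geodesic, so that its normalization carries the restriction of the Bergman (hyperbolic) metric and its components are themselves ball quotients of dimension one, i.e. hyperbolic Riemann surfaces.

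For the formula $K_X\cdot D=3\sum_i(g(\hD_i)-1)$, I would argue component by component. On a smooth complex two-ball quotient the canonical class is proportional to the Kähler class of the Bergman metric; concretely, $c_1(K_X)$ is represented by a constant negative multiple of the Kähler form $\omega$, normalized so that the holomorphic sectional curvature is $-1$. Since each $\hD_i$ is totally geodesic, the restriction of $\omega$ to $\hD_i$ is the Kähler form of the induced metric on $\hD_i$, which (being a totally geodesic curve in the ball) is a hyperbolic metric of curvature $-1$. By Gauss–Bonnet, $\int_{\hD_i}(\text{restricted form})$ is a fixed multiple of $g(\hD_i)-1$. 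First I would fix the normalization constant by testing it on a single known totally geodesic curve whose genus and $K_X$-degree are both computable (for instance one of the curves $E_i$ or $C_i$ produced in Section~\ref{sec:summary}, or by a direct curvature computation on $\twoball$), which pins down the universal constant as~$3$. Summing over the irreducible components and using $K_X\cdot D=\sum_i K_X\cdot D_i=\sum_i K_X\cdot\nu_*[\hD_i]$ gives the stated formula; here totally geodesic is exactly what guarantees $\omega|_{\hD_i}$ has constant curvature, so no correction terms appear.

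For the self-intersection, I would use the adjunction/genus formula on the (possibly singular) reduced curve $D$ together with the normalization. Writing $p_a(D)$ for the arithmetic genus and recalling that each simple self-intersection point of multiplicity $b_\ell$ is an ordinary $b_\ell$-fold point, the discrepancy between arithmetic and geometric genus is $\sum_\ell \binom{b_\ell}{2}=\tfrac12\sum_\ell b_\ell(b_\ell-1)=\tfrac12\tdelta$. The adjunction formula $2p_a(D)-2=D\cdot D+K_X\cdot D$ then relates $D\cdot D$ to $K_X\cdot D$ and the genus data. Substituting the already-established value $K_X\cdot D=3\sum_i(g(\hD_i)-1)=-\tfrac32 e(\hD)$ (since $e(\hD)=\sum_i(2-2g(\hD_i))$) and expressing $p_a(D)$ through $\sum_i g(\hD_i)$, the number $n$ of components, and the $\delta$-invariant $\tfrac12\tdelta$, I expect the $K_X\cdot D$ terms to combine so that the coefficient of $e(\hD)$ collapses to $\tfrac12$, leaving exactly $D\cdot D=\tfrac12 e(\hD)+\tdelta$.

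The main obstacle is bookkeeping the relation between $p_a(D)$, the geometric genera $g(\hD_i)$, the number of components $n$, and the singularity contributions, and making sure the multiplicity-$b_\ell$ crossings enter with the right combinatorial weight: the node-count at an ordinary $b_\ell$-fold point is $\binom{b_\ell}{2}$, and this must be tracked consistently both in the $\delta$-invariant and in how the crossings glue the normalization $\hD=\cup_i\hD_i$ back to $D$ (affecting $e(\hD)$ versus $e(D)$). The hypothesis that all self-intersections are \emph{simple} crossings of totally geodesic branches is what makes each singular point an ordinary multiple point, so the local analytic type is transverse lines and the $\delta$-invariant formula applies verbatim; I would state this reduction explicitly before invoking adjunction.
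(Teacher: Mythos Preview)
Your approach is essentially the same as the paper's: establish the proportionality $K_X\cdot D\propto\sum_i(g(\hD_i)-1)$ via the Bergman metric and Gauss--Bonnet, then feed this into the adjunction formula $2(g(D)-1)=K_X\cdot D+D\cdot D$ together with the $\delta$-invariant formula for ordinary multiple points to extract $D\cdot D$. The paper phrases the first step as the identity $c_1(K_{\hD})=\tfrac{2}{3}\nu^*c_1(K_X)$ (a direct curvature computation on~$\twoball$), which immediately gives the constant~$3$.

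One caution: your suggestion to pin down the constant by testing on one of the curves $E_i$ or $C_i$ from Section~\ref{sec:summary} is circular, since the paper computes $K_X\cdot E_i$ and $K_X\cdot C$ in Lemma~\ref{lem:intersectionnumbers} precisely by invoking the present lemma. You should rely on your alternative (the direct curvature computation), which is what the paper does.
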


\begin{proof}Note that we are in the case of a (non necessarily connected) immersed smooth curve in a surface, 
with singularities given by intersections of transversal local
branches. Moreover, it is well known that for a totally geodesic curve
$D$ in a two-ball quotient, $c_1(K_{\hD})=\frac23\nu^*c_1(K_X)$ (this
is a simple computation involving the curvature form on $\twoball$).
As a consequence, by the adjunction formula,
$$
K_X\cdot D=\int_D c_1(K_X)=\frac 32\sum_i \int_{\hD_i} c_1(K_{\hD_i})= 3\sum_{i=1}^n(g(\hD_i)-1).
$$
Recall moreover from~\cite[\S II.11]{BHPV} that
$$
g(D)=g(\hD)+\dan(D),\ \ \ \mbox{where}\ \  g(\hD)=1+\sum_i(g(\hD_i)-1)\ \ \mbox{and}\ \ \dan(D)=\sum_{x\in D}\dim_{\bC}(\nu_*\cO_{\hD}/\cO_D)
$$
(here, the genus of a singular curve is its arithmetic genus).
From the adjunction formula for embedded curves, $2(g(D)-1)=K_X\cdot D+D\cdot D$ and therefore,
$$
D\cdot D=2(g(D)-1)-K_X\cdot D=2(g(\hD)+\dan(D)-1)-3(g(\hD)-1)=(1-g(\hD))+2\dan(D).
$$
Finally, observe that in the case at hand, $\dan(D)=\frac12\sum_{i=1}^k b_i(b_i-1)=\frac12\tdelta$.
\end{proof}

\begin{lemm}\label{lem:intersectionnumbers}
We have the following intersection numbers.
\begin{itemize}
\item[(a)] For $i=1, 2, 3$, we have $K_X\cdot E_i=9$.  Moreover,  for $i=1,2$, $E_i\cdot E_i=5$, $E_i\cdot E_3=9$ and $E_1\cdot E_2=13$. We also have  $E_3\cdot E_3=9$. 
\item[(b)] Denote by $C$ either $C_1$ or $C_2$.  Then $K_X\cdot C=9$, $C\cdot C=-1$,   
$E_1\cdot C=11$, $E_2\cdot C=7$ and $E_3\cdot C=9$.
\end{itemize}
\end{lemm}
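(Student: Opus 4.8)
The plan is to derive every entry from Lemma~\ref{lem:totalgeod} together with the combinatorial data already assembled in Section~\ref{sec:summary}; no new geometry is required, only careful bookkeeping of branches at the finitely many possible crossing points, namely the preimages of $P_1$, $P_2$ and~$P_3$.

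The canonical intersections are immediate. Each $E_i$ and each of $C_1,C_2$ is irreducible, and by \S\ref{sec:triangledesc} its normalization has genus~$4$; so the first formula of Lemma~\ref{lem:totalgeod} gives $K_X\cdot E_i=K_X\cdot C=3(4-1)=9$ at once. For the self-intersections I would use $D\cdot D=\frac12 e(\hD)+\tdelta$ with $\tdelta=\sum_k b_k(b_k-1)$. Since each curve is irreducible of genus~$4$, $\frac12 e(\hD)=1-g(\hD)=-3$, so everything reduces to computing $\tdelta$, i.e. the self-intersection multiplicities. By Lemma~\ref{lem:selfintersect} a type~$B$ curve can only self-cross over~$P_2$, at the three points $\Pi(b^\mu.O)$, where its branch numbers are the $n_\mu$ of Proposition~\ref{prop:zeroimmersions}; thus for $E_1$ (image of $M_0$, with $(n_0,n_1,n_{-1})=(3,1,2)$) one gets $\tdelta=3\cdot2+2\cdot1=8$ and $E_1\cdot E_1=-3+8=5$, and similarly $E_2\cdot E_2=5$ and $E_3\cdot E_3=-3+12=9$. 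For $C_1=b(M_c)$ (and likewise $C_2$) Lemma~\ref{lem:selfintersect} allows self-crossings over $P_2$ and over the $36$~points $\Pi(k_i.P)$; here the $P_2$-branch numbers $(0,1,2)$ of Proposition~\ref{prop:intersectioncount} contribute $2\cdot1=2$, while every $m_i$ in the table there equals $0$ or~$1$ and so contributes nothing, giving $\tdelta=2$ and $C\cdot C=-3+2=-1$.

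The cross-intersections are the substantive part, and I would reduce each to counting transversal crossings, the local multiplicity at a point being the product of the numbers of branches of the two curves there. Two distinct type~$B$ curves can meet only over~$P_2$ (at a $P_3$-point there is a single type~$B$ mirror, so no two of $E_1,E_2,E_3$ share it, by Proposition~\ref{prop:xi12orbit1}), whence $E_i\cdot E_j=\sum_\mu n_\mu(E_i)\,n_\mu(E_j)$; feeding in $(3,1,2),(2,1,3),(1,4,1)$ yields $E_1\cdot E_2=13$ and $E_i\cdot E_3=9$. For a type~$B$ curve $E_i$ against $C=C_1$ or $C_2$, crossings occur both over~$P_2$, contributing $\sum_\mu n_\mu(E_i)\,n_\mu(C)$ with $(n_0,n_1,n_{-1})(C)=(0,1,2)$, and over~$P_3$, each shared $P_3$-point contributing~$1$; the number of shared $P_3$-points is read off from Proposition~\ref{prop:xi12orbit1}, namely $6,0,3$ for $E_1,E_2,E_3$ meeting $C_1$. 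Adding the two contributions gives $5+6=11$, $7+0=7$ and $6+3=9$, i.e. $E_1\cdot C=11$, $E_2\cdot C=7$, $E_3\cdot C=9$, with the same outcome for $C_2$ by the symmetry of the tables.

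The main obstacle is purely organizational: one must verify that $P_1$, where no type~$B$ mirror passes, contributes nothing to any $E_i\cdot C$, and that the $P_3$-contributions are correctly tallied from Proposition~\ref{prop:xi12orbit1} rather than double-counted with the $P_2$-contributions. That each listed crossing is genuinely transversal, with local multiplicity equal to the product of the branch counts, is guaranteed by the total geodesy of the curves and the simplicity of the crossings noted at the start of Section~\ref{sec:summary}.
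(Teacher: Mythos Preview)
Your argument is correct and follows exactly the route the paper intends: use Lemma~\ref{lem:totalgeod} for $K_X\cdot D$ and the self-intersections, and count transversal branch crossings over $\projmap^{-1}(P_1)$, $\projmap^{-1}(P_2)$, $\projmap^{-1}(P_3)$ using the data from Propositions~\ref{prop:zeroimmersions}, \ref{prop:intersectioncount}, and~\ref{prop:xi12orbit1}. The paper's own proof simply records the three observations (that $E_i\cap E_j$ lies over~$P_2$, that self-crossings of~$C$ lie over~$P_2$, and that $E_i\cap C$ lies over $P_2\cup P_3$) and leaves the arithmetic to the reader; you have written it out in full and correctly.
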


\begin{proof}  The results follow immediately from Lemma~\ref{lem:totalgeod} (here, all the involved curves are irreducible) and the results in \S\ref{sec:self} and \S\ref{sec:config}.

First, note that since the normalizations of the curves in (a) and (b) all have genus 4, their intersection with $K_X$ is always $9$ by Lemma~\ref{lem:totalgeod}. We leave the other computations to the reader and just observe that:

\ni -- a curve $E_i$ can only intersect a curve $E_j$ at $\projmap^{-1}(P_2)$,

\ni -- two local branches of a curve $C$ can only intersect at $\projmap^{-1}(P_2)$,

\ni -- a curve $C$ can only intersect a curve $E_i$ at $\projmap^{-1}(P_2)$ and $\projmap^{-1}(P_3)$.
\end{proof}

From now on, for any two divisors $D$ and $D'$ on $X$, $D\equiv D'$ will mean that $D$ and $D'$ are numerically equivalent.
\begin{lemm}\label{lem:numind}
 $E_1, E_2$ and $C$ represent numerically linearly independent elements in the N\'eron-Severi group, where $C=C_1$ or $C_2$.
\end{lemm}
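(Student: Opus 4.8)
The plan is to reduce numerical linear independence to the nondegeneracy of the intersection form restricted to the span of the three classes $E_1$, $E_2$, $C$. First I would assemble the symmetric Gram matrix $G$ whose entries are the pairwise intersection numbers $D\cdot D'$ for $D,D'\in\{E_1,E_2,C\}$, every one of which is supplied by Lemma~\ref{lem:intersectionnumbers}. Using $E_i\cdot E_i=5$ for $i=1,2$, $E_1\cdot E_2=13$, $E_1\cdot C=11$, $E_2\cdot C=7$ and $C\cdot C=-1$, this gives
\begin{displaymath}
G=\begin{pmatrix} 5 & 13 & 11 \\ 13 & 5 & 7 \\ 11 & 7 & -1\end{pmatrix}.
\end{displaymath}

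Next, suppose toward a contradiction that some combination $aE_1+bE_2+cC$ is numerically trivial, with $a,b,c\in\bQ$ not all zero. Intersecting this class in turn with $E_1$, $E_2$ and $C$ produces three linear equations in $(a,b,c)$, which are exactly the statement that $G$ annihilates the column vector $(a,b,c)^{\rm t}$. Hence it suffices to verify that $G$ is invertible: if $\det G\neq 0$, the only solution is $a=b=c=0$, and the three classes are numerically independent. Note that among all divisor classes one needs to test the putative relation only against $E_1$, $E_2$ and $C$ themselves, so no additional geometric input is required at this point.

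Finally, a direct cofactor expansion yields $\det G = 5(-54)-13(-90)+11(36)=1296\neq 0$, which completes the argument. In this sense there is essentially no obstacle remaining here: the genuinely difficult work lies upstream, in Lemma~\ref{lem:intersectionnumbers} and in the configuration analysis of \S\ref{sec:self}--\S\ref{sec:config} that produced the intersection numbers, whereas the independence itself is a one-line determinant computation. The only subtlety worth recording is that the same matrix $G$, and hence the same nonzero determinant, arises whether one takes $C=C_1$ or $C=C_2$, since Lemma~\ref{lem:intersectionnumbers}(b) provides identical intersection data in both cases; thus the conclusion holds uniformly in the choice of $C$.
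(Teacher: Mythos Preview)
Your argument is correct and essentially identical to the paper's own proof: both intersect a hypothetical relation $aE_1+bE_2+cC\equiv0$ with $E_1$, $E_2$, $C$, obtain the same $3\times3$ linear system, and conclude from the nonvanishing determinant $1296$. Framing this via the Gram matrix is only a cosmetic difference.
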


\begin{proof}  Assume that $E_1, E_2$ and $C$ satisfy numerically an identity
$aE_1+bE_2+cC\equiv0$.
By considering the intersection of the above identity with $E_1, E_2$ and $C$ respectively, we conclude that
\begin{displaymath}
0=5a+13b+11c,\quad
0=13a+5b+7c\quad\text{and}\quad
0=11a+7b-c.
\end{displaymath}
The determinant of this linear system is $1296\neq0$.  Hence $a=b=c=0$.
\end{proof}

\begin{coro}\label{coro:picard}
The Picard
number of $X$ is $3$.
\end{coro}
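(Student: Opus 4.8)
The plan is to combine the lower bound on the Picard number coming from Lemma~\ref{lem:numind} with the upper bound furnished by the Hodge-theoretic data already computed in Lemma~\ref{lem:numericalinv}. Recall that for a smooth projective surface the N\'eron--Severi group injects into $H^{1,1}(X)$, so the Picard number $\rho(X)$ satisfies $\rho(X)\leqslant h^{1,1}(X)$. Since Lemma~\ref{lem:numericalinv} established $h^{1,1}(X)=3$, we immediately obtain $\rho(X)\leqslant 3$.

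For the reverse inequality, I would invoke Lemma~\ref{lem:numind}, which exhibits three divisors $E_1$, $E_2$ and $C$ that are numerically linearly independent in the N\'eron--Severi group. This means the image of these classes spans a rank~$3$ subgroup of $NS(X)$ modulo torsion, so $\rho(X)\geqslant 3$. Combining the two bounds forces $\rho(X)=3$, which is the assertion of the corollary.

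The two inequalities fit together so cleanly that there is essentially no obstacle remaining once the preceding results are in hand; the genuine work has already been carried out in the two supporting lemmas. The only point requiring a word of care is the standard fact that the Picard number is bounded above by $h^{1,1}$, which relies on the Lefschetz $(1,1)$-theorem together with the observation that algebraic classes live in the $(1,1)$-part of the second cohomology. I would simply cite this standard inequality rather than reprove it. Thus the corollary is a two-line consequence: $3\leqslant\rho(X)\leqslant h^{1,1}(X)=3$.

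It is worth noting, as the Main Theorem records, that this equality $\rho(X)=h^{1,1}(X)=3$ has the pleasant interpretation that every Hodge class of type $(1,1)$ on $X$ is algebraic, so that the three explicit totally geodesic curves $E_1$, $E_2$ and $C$ (or equivalently $E_1$, $E_2$ and the class of $C_1$) already generate the N\'eron--Severi group up to finite index. I would remark on this, but it is not needed for the proof of the numerical statement itself.
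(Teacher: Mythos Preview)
Your proof is correct and follows essentially the same approach as the paper: combine the lower bound $\rho(X)\geqslant 3$ from Lemma~\ref{lem:numind} with the upper bound $\rho(X)\leqslant h^{1,1}(X)=3$ from Lemma~\ref{lem:numericalinv}. The paper's proof is equally brief and invokes exactly these two ingredients.
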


\begin{proof} It follows from the previous lemma that the Picard number is at least $3$, given by the classes of $E_1, E_2$ and $C$. On the other hand, $h^{1,1}(X)=3$ by Lemma~\ref{lem:numericalinv}.
Since the Picard number is bounded from above by $h^{1,1}$, we conclude that the Picard
number is~3.
\end{proof}

\begin{prop}
The canonical line bundle $K_X$ and $E_3$ give rise to the same class
in the N\'eron-Severi group. Moreover, $K_X\equiv E_3\equiv\frac12E_1+\frac12E_2$.
\end{prop}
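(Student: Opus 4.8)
The plan is to exploit the fact that, by Corollary~\ref{coro:picard}, the Picard number of $X$ equals~$3$, while by Lemma~\ref{lem:numind} the classes of $E_1$, $E_2$ and $C$ are numerically independent and hence form a $\bQ$-basis for the space of numerical equivalence classes. By the Hodge index theorem the intersection form on this space is non-degenerate; concretely, the Gram matrix of the triple $(E_1,E_2,C)$ has determinant $1296\ne 0$, exactly as computed in the proof of Lemma~\ref{lem:numind}. Consequently a numerical class is completely determined by its three intersection numbers against $E_1$, $E_2$ and~$C$, so to establish the two asserted equivalences it suffices to check that $K_X$, $E_3$ and $\frac12 E_1+\frac12 E_2$ all pair identically with each basis element.

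First I would read off the relevant pairings from Lemma~\ref{lem:intersectionnumbers}. For the canonical class one has $K_X\cdot E_1=K_X\cdot E_2=K_X\cdot C=9$, and for $E_3$ one likewise has $E_3\cdot E_1=E_3\cdot E_2=E_3\cdot C=9$. Thus $K_X$ and $E_3$ determine the same intersection vector $(9,9,9)$ against the basis, which already forces $K_X\equiv E_3$ by non-degeneracy. Next I would verify the third class directly, using $E_1\cdot E_1=E_2\cdot E_2=5$, $E_1\cdot E_2=13$, $E_1\cdot C=11$ and $E_2\cdot C=7$:
$$
\Bigl(\frac12 E_1+\frac12 E_2\Bigr)\cdot E_1=\frac12(5+13)=9,\quad
\Bigl(\frac12 E_1+\frac12 E_2\Bigr)\cdot E_2=\frac12(13+5)=9,
$$
and $\bigl(\frac12 E_1+\frac12 E_2\bigr)\cdot C=\frac12(11+7)=9$. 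Hence this class too pairs to~$9$ with each of $E_1$, $E_2$ and~$C$.

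Since $K_X$, $E_3$ and $\frac12 E_1+\frac12 E_2$ all share the intersection vector $(9,9,9)$ against the non-degenerate basis $(E_1,E_2,C)$, the three are numerically equal, giving $K_X\equiv E_3\equiv \frac12 E_1+\frac12 E_2$. There is no real obstacle here: all the content sits in the already-computed intersection table of Lemma~\ref{lem:intersectionnumbers}, and the only conceptual ingredient is the non-degeneracy of the intersection form on the (rank~$3$) space of numerical classes, which is what upgrades ``matching intersection numbers'' to genuine numerical equivalence. The one point worth flagging is that $K_X\equiv \frac12 E_1+\frac12 E_2$ is a statement with rational coefficients; its honest integral form is $2K_X\equiv E_1+E_2$.
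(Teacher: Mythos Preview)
Your proof is correct and follows essentially the same approach as the paper: both use that $E_1,E_2,C$ span the numerical classes with a non-degenerate intersection pairing (determinant~$1296$), and then compare the intersection numbers of $K_X$, $E_3$, and $\tfrac12 E_1+\tfrac12 E_2$ against this basis. The only cosmetic difference is that the paper writes $K_X\equiv aE_1+bE_2+cC$ and solves the resulting $3\times3$ system for $(a,b,c)=(\tfrac12,\tfrac12,0)$, whereas you verify this solution directly; the content is identical.
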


\begin{proof} From the discussions in the previous section, we know that $E_1, E_2$ and $C=C_1$ 
form a basis of the N\'eron-Severi group (which is torsion free since $H_1(X,\bZ)=\bZ^2$ is torsion free).

Hence we may write $K_X\equiv aE_1+bE_2+cC$
for some rational numbers $a$, $b$ and $c$.  By pairing with $E_1$, $E_2$ and $C$ respectively, we arrive at
\begin{displaymath}
9=5a+13b+11c,\quad
9=13a+5b+7c\quad\text{and}\quad
9=11a+7b-c.
\end{displaymath}
Solving the above system of equations, we obtain 
$K_X\equiv\frac12E_1+\frac12E_2$.
The same computation leads to $E_3\equiv\frac12(E_1+E_2)$ since $E_3\cdot E_i=K_X\cdot E_i$ for 
$i=1,2$ and $E_3\cdot C=K_X\cdot C$.
\end{proof}

\begin{rem}
By the previous proposition, we also have $K_X\equiv  \frac 23(\frac12E_1+\frac12E_2)+\frac13 E_3=\frac13(E_1+E_2+E_3)$. This fact can be recovered directly from the description of $X$ as an orbifold covering of $R=\bar\Gamma\backslash\twoball$ as in~\S\ref{sec:summary}.

We use the notation of \S\ref{sec:descorb}. Let $\pro:\bar Q=\P^2_\C\to R=\P^2_\C/\Sigma_3$ be the projection.
First, we compute the canonical divisor $K_R$ of $R$. We have $K_R=a D_A=2aD_B$ for some $a\in\Q$ (see~\cite[\S11.4 and Proposition 11.5]{DM2} for a description of ${\rm Pic}(R)$). If we denote by $L=\cO(1)$ the positive generator of ${\rm Pic}(\P^2_\C)$, we have $-3L=K_{\P^2_\C}=\pro^*K_R+3L=6aL+3L$ as $\pro$ branches at order 2 along $D_A$, and $D_A$ has three lines as a preimage in $\P^2_\C$. Hence $K_R=- D_A=-2D_B$.

Now, the orbifold canonical divisor of $\bar\Gamma\backslash\twoball$ is $K_R+\frac{3-1}3D_A+\frac{4-1}4D_B=(-1+\frac23+\frac38)D_A=\frac{1}{24}D_A=\frac1{12}D_B$. In particular, as $\projmap^*D_B=4(E_1+E_2+E_3)$, we get the result.

\end{rem}

\section{Geometry of a generic fiber of the Albanese map}\label{sec:genericalbfiber}

Let $\alpha:X\rightarrow T$ be the Albanese map of $X$. From $\Pi/[\Pi,\Pi]\cong\Z^2$, we know that
$T$ is an elliptic curve, and in particular, $\a$ is onto.  Moreover, note that since the image of  $\a$ is a curve, the fibers of $\a$ are connected (see~\cite[Proposition 9.19]{U}). Let $D$ be a curve on $X$.  The mapping $\alpha$ induces a mapping $\alpha|_D:D\rightarrow T$.
Suppose $F$ is the generic fiber of $\alpha$.  Then the degree of $\alpha|_D$ is given by $D\cdot F$.

 \begin{lemm} \label{lem:numfiber}
Let $m,n,p$ be the degrees of $E_1$, $E_2$, and~$ C=C_1$,  respectively, over the Albanese torus $T$ of $X$.
The generic fiber $F$ of the Albanese fibration of $X$ satisfies
$$
F\equiv\frac1{72}\big((-3m+5n+2p)E_1+(5m-7n+6p)E_2+2(m+3n-4p)C\big).
$$
\end{lemm}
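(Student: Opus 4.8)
The plan is to use that $E_1$, $E_2$ and $C=C_1$ form a $\Q$-basis of the N\'eron--Severi group, as established in Lemma~\ref{lem:numind} together with Corollary~\ref{coro:picard}. Since $F$ is a divisor on~$X$, its numerical class lies in $\mathrm{NS}(X)\otimes\Q$, so I would write
$$
F\equiv aE_1+bE_2+cC
$$
for rational numbers $a,b,c$, and determine them by pairing this relation with each of the three basis classes.

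First I would fix the meaning of $m$, $n$, $p$. As noted at the beginning of this section, for any curve $D$ the degree of $\alpha|_D\colon D\to T$ equals $D\cdot F$; thus by hypothesis $m=E_1\cdot F$, $n=E_2\cdot F$ and $p=C\cdot F$. Intersecting the displayed relation successively with $E_1$, $E_2$ and $C$, and inserting the intersection numbers from Lemma~\ref{lem:intersectionnumbers}(a),(b) --- namely $E_1\cdot E_1=E_2\cdot E_2=5$, $E_1\cdot E_2=13$, $E_1\cdot C=11$, $E_2\cdot C=7$ and $C\cdot C=-1$ --- I would arrive at the linear system
$$
m=5a+13b+11c,\qquad n=13a+5b+7c,\qquad p=11a+7b-c.
$$

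The remaining step is to invert this system. Its coefficient matrix is exactly the one that appears in the proof of Lemma~\ref{lem:numind}, whose determinant was already computed to be $1296\neq0$, so the solution is unique. Forming the adjugate, the entries of its first two rows turn out to be divisible by~$18$ and those of the last row by~$36$; dividing by~$1296$ then gives
$$
a=\frac{1}{72}(-3m+5n+2p),\qquad b=\frac{1}{72}(5m-7n+6p),\qquad c=\frac{1}{36}(m+3n-4p).
$$
Substituting back, and writing $\frac{1}{36}=\frac{2}{72}$ for the coefficient of~$C$, reproduces the asserted formula. The main point is that there is essentially no obstacle here: every piece of geometric input --- the basis property and the full intersection table --- has been assembled in the previous section, and the lemma reduces to a routine $3\times3$ inversion, so the only care needed is in the arithmetic.
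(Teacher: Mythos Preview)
Your argument is correct and is essentially identical to the paper's own proof: write $F\equiv aE_1+bE_2+cC$ using Lemma~\ref{lem:numind}, intersect with $E_1$, $E_2$, $C$ to obtain the same $3\times3$ linear system, and solve. The only difference is that you spell out the inversion via the adjugate, whereas the paper simply states that the lemma follows from solving the system.
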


\begin{proof}  From Lemma~\ref{lem:numind}, we may write numerically 
$F\equiv aE_1+bE_2+cC$ for some rational numbers $a,b,c$.
By pairing with $E_1$, $E_2$ and $C$ respectively, we arrive at
\begin{displaymath}
m=5a+13b+11c,\quad
n=13a+5b+7c\quad\text{and}\quad
p=11a+7b-c.
\end{displaymath}
The lemma follows from solving the above system of equations.
\end{proof}

\begin{lemm}\label{lem:degrees}
The degrees of $E_1, E_2, C=C_1$ over the Albanese torus $T$ of $X$ are given by
$$
m=60,\ n=12, \  p=24.
$$
\end{lemm}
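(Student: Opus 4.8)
The plan is to compute each degree as the topological degree of the nonconstant holomorphic map $\alpha|_{\hD}\colon\hD\to T$ obtained by composing the normalization $\hD\to X$ with the Albanese map, where $\hD$ runs over the genus-$4$ curves $\hat E_1$, $\hat E_2$ and $\hat C_1$. The crucial observation is that the Albanese map realizes, at the level of fundamental groups, the abelianization $\Pi\to\Pi/[\Pi,\Pi]\cong\Z^2$, so that under the canonical identification $H_1(X,\Z)\cong H_1(T,\Z)$ the induced homomorphism $f\colon H_1(\hD,\Z)\to H_1(T,\Z)\cong\Z^2$ is \emph{exactly} the one already tabulated in \S\ref{sec:imhone}. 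Thus no further computation in $\Pi$ is needed: all the data is extracted from that table, and since the normalization $\hD\to D$ is birational, $\deg(\alpha|_{\hD})=\deg(\alpha|_D)=D\cdot F$ is the quantity sought.

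Next I would record the degree formula for a nonconstant holomorphic map $f\colon\hD\to T$ from a compact Riemann surface to an elliptic curve. Writing $a,b\in H^1(T,\Z)$ for the basis dual to a basis $e_1,e_2$ of $H_1(T,\Z)$, normalized so that $\langle a\cup b,[T]\rangle=1$, the fundamental class of $T$ is $a\cup b$ and hence $\deg(f)=\langle f^*(a\cup b),[\hD]\rangle=\langle f^*a\cup f^*b,[\hD]\rangle$. By the presentation~\eqref{eq:rank4presentation}, the homology classes of $u_1,v_1,\dots,u_4,v_4$ form a symplectic basis of $H_1(\hD,\Z)$, so evaluating the cup product of $f^*a$ and $f^*b$ against $[\hD]$ gives
\begin{displaymath}
\deg(\alpha|_{\hD})=\Big|\sum_{i=1}^{4}\det\big(f(u_i),f(v_i)\big)\Big|,
\end{displaymath}
where $f(u_i),f(v_i)$ are read off the table as the columns of a $2\times2$ integer matrix, the sign being fixed by the positivity of the degree of a holomorphic map.

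Finally I would substitute the entries from the table in \S\ref{sec:imhone}. For $E_1$ the four determinants are $-39,0,-18,-3$, summing to $-60$; for $E_2$ they are $-3,0,-6,-3$, summing to $-12$; and for $C_1$ they are $-4,-8,-8,-4$, summing to $-24$. Taking absolute values yields $m=60$, $n=12$ and $p=24$.

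The only genuinely delicate point is the justification of the degree formula, namely that $\deg(\alpha|_{\hD})$ equals the alternating sum of $2\times2$ determinants of the images of a symplectic basis. This rests on two facts: the identification of the Albanese-induced map on $H_1$ with the tabulated $f$, and the standard fact that a genus-$g$ surface-group presentation of the shape~\eqref{eq:rank4presentation} yields a symplectic basis of $H_1$. Once these are in place the result is a one-line arithmetic check against the table, with no further obstacle.
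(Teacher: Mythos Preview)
Your proof is correct and follows essentially the same strategy as the paper: both reduce the degree of $\hat\alpha\colon\hD\to T$ to the sum $\sum_{i=1}^4\det(f(u_i),f(v_i))$ read off the table in \S\ref{sec:imhone}, and then take absolute values. The only cosmetic difference is that the paper justifies this formula analytically via the Riemann bilinear relations for $\hat\alpha^*(\sqrt{-1}\,dz\wedge d\bar z)$, whereas you phrase it topologically as the cup-product computation of $\langle f^*a\cup f^*b,[\hD]\rangle$; these are two standard incarnations of the same identity, and the paper likewise remarks that the common negative sign reflects an orientation mismatch between the symplectic basis and the complex orientation.
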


\begin{proof} Let $D$ represent one of the curves $E_1, E_2, C$, $\nu:\hat D\rightarrow D$ the normalization of $D$ and $\hat\alpha=\alpha\circ \nu$.  
Let $\omega$ be a positive $(1,1)$ form on $T$.  Then the degree of $D$ over
$T$ is given by
$\deg(D)=\frac{\int_D \alpha^*\omega}{\int_T\omega}$.  The key is to find the
degree from the
information of the explicit curves that we have.  For this purpose, we use
an analogue of the Riemann bilinear relations.  Let $\eta$ be a holomorphic $1$-form on the smooth Riemann surface $\hat D$.  Let $\{u_i, v_i\}$ be a basis of $\pi_1(\hat D)$ as studied in
\S\ref{sec:imhone}.  Then the Riemann bilinear relation (cf.~\cite[p. 231]{GH}) states  that
$$
\int_{\hat D}\sqrt{-1}\eta\wedge\overline{\eta}=\sqrt{-1}\sum_{i=1}^4\Biggl[\int_{u_i}\eta\overline{\int_{v_i}\eta}-\int_{v_i}\eta\overline{\int_{u_i}\eta}\Biggr]
$$
where we use the same notation for an element of $\pi_1(\hat D)$ and its image in $H_1(\hat D,\bZ)$.
Let us write $T=\bC/(\bZ +\bZ \tau)$  where ${\rm Im}\,\tau>0$.  Let $\omega_T=\sqrt{-1}dz\wedge\overline{dz}$ be the standard $(1,1)$ form
on $\bC$ and hence $T$.  The above formula gives
\begin{equation}\label{eq:riembilT}
\int_T\omega_T=\sqrt{-1}(\overline{\tau}-\tau).
\end{equation}
Pulling back to $D$, the above formula gives
\begin{equation}\label{eq:riembilD}
\begin{aligned}
\int_D\alpha^*\omega_T&=\int_{\hat D}\hat\alpha^*\omega_T
=\int_{\hat D}\sqrt{-1}\hat\alpha^*dz\wedge \hat\alpha^*\overline{dz}\\
&=\sqrt{-1}\sum_{i=1}^4\Biggl[\int_{u_i}\hat\alpha^*dz\overline{\int_{v_i}\hat\alpha^*dz}-\int_{v_i}\hat\alpha^*dz\overline{\int_{u_i}\hat\alpha^*dz}\Biggr]\\
&=\sqrt{-1}\sum_{i=1}^4\Biggl[\int_{\hat\alpha_*(u_i)}dz\overline{\int_{\hat\alpha_*(v_i)}dz}-\int_{\hat\alpha_*v_i}dz\overline{\int_{\hat\alpha_*u_i}dz}\Biggr].
\end{aligned}
\end{equation}
In the above, $\hat\alpha_*:H_1(\hat D,\bZ)\rightarrow H_1(T,\bZ)\cong H_1(X,\bZ)\cong \bZ^2$ refers to the map on $1$-cycles induced by $\hat\alpha$.
Hence the right-hand side of the above expression in terms of the notation in \S\ref{sec:imhone} is (up to sign)
\begin{equation}\label{eq:riembilDc}
\sqrt{-1}\sum_{i=1}^4\Biggl[\int_{f(u_i)}dz\overline{\int_{f(v_i)}dz}-\int_{f(v_i)}dz\overline{\int_{f(u_i)}dz}\Biggr]
=\Biggl[\sum_{i=1}^4 \det(f(u_i),f(v_i))\Biggr]\sqrt{-1} (\overline{\tau}-\tau),
\end{equation}
where $\det(f(u_i),f(v_i))$ stands for the determinant of the two by two matrix formed by the two vectors $f(u_i)$ and $f(v_i)$ from the table
in \S\ref{sec:imhone}. Notice that the resulting number will be positive if and only if the orientation on $\hat D$ coming from the choice of $(u_1,v_1,\dots,u_4,v_4)$ as a symplectic basis of $H_1(\hat D,\Z)$, and the orientation on $T$ induced by the choice of the basis $(e_1,e_2)$ of $H_1(T,\bZ)$ are compatible (i.e. both are the same, or the opposite, as the one induced by the respective complex structures).

Substituting into~(\ref{eq:riembilD}) and~(\ref{eq:riembilDc}) the values of $f(u_i)$ and $f(v_i)$ from the table in \S\ref{sec:imhone}, we conclude the values of
$-60, -12, -24$ for the values of $\sum_{i=1}^4 \det(f(u_i),f(v_i))$ in the case of $E_1, E_2$ and $C$ respectively.
We conclude from ~(\ref{eq:riembilT}),~(\ref{eq:riembilD}) and~(\ref{eq:riembilDc}) that the degrees $m,n,p$ are given by $60, 12$ and $24$ respectively (and that the orientation on $\hat D$ and $T$ are not compatible).
\end{proof}

\begin{theo}\label{th:genus}
A fiber of the Albanese map $\alpha:X\rightarrow T$ represents the same numerical class as $-E_1+5E_2$, and the genus of a generic fiber $F$ is~19.
\end{theo}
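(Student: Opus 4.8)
The plan is to obtain the numerical class of $F$ directly from the two preceding lemmas, and then to read off the genus from the adjunction formula.

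First I would substitute the degrees $m=60$, $n=12$, $p=24$ of Lemma~\ref{lem:degrees} into the expression for $F$ given in Lemma~\ref{lem:numfiber}. The coefficient of $E_1$ becomes $\frac{1}{72}(-3\cdot60+5\cdot12+2\cdot24)=\frac{1}{72}(-72)=-1$, that of $E_2$ becomes $\frac{1}{72}(5\cdot60-7\cdot12+6\cdot24)=\frac{1}{72}(360)=5$, and that of $C$ becomes $\frac{2}{72}(60+3\cdot12-4\cdot24)=0$. Thus $F\equiv -E_1+5E_2$, which is the first assertion; note in particular that the fiber class happens to lie in the span of $E_1$ and $E_2$, the contribution of $C$ dropping out.

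For the genus I would apply adjunction to the smooth connected generic fiber, $2g(F)-2=K_X\cdot F+F\cdot F$. Since $F$ is a fiber of the fibration $\alpha$ over the curve $T$, distinct fibers are disjoint and $F\cdot F=0$; I would verify this directly from Lemma~\ref{lem:intersectionnumbers}(a) as $(-E_1+5E_2)^2=E_1^2-10\,E_1\cdot E_2+25\,E_2^2=5-130+125=0$, which is reassuring since it must vanish for any fiber class. Using $K_X\cdot E_1=K_X\cdot E_2=9$ from the same lemma then gives $K_X\cdot F=-9+5\cdot9=36$, whence $2g(F)-2=36$ and $g(F)=19$.

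This last step is only a two-line computation; essentially all of the real effort sits in the inputs already established, namely the intersection numbers of Lemma~\ref{lem:intersectionnumbers} and, most importantly, the degrees of Lemma~\ref{lem:degrees}, the latter derived through the Riemann bilinear relations and the explicit homology table of \S\ref{sec:imhone}. The only conceptual points to keep in mind are that the generic fiber is smooth and irreducible---so that its geometric genus equals the arithmetic genus produced by adjunction, using generic smoothness in characteristic zero together with the connectedness noted at the start of \S\ref{sec:genericalbfiber}---and that a sign or orientation slip in the degree computation would propagate into this formula. The identity $F\cdot F=0$, which is forced on any genuine fiber class, is precisely the safeguard against such an error, so I expect the main ``obstacle'' here to be nothing more than double-checking signs.
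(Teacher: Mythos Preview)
Your proof is correct and follows essentially the same approach as the paper: substitute the degrees from Lemma~\ref{lem:degrees} into Lemma~\ref{lem:numfiber} to obtain $F\equiv -E_1+5E_2$, then apply adjunction with $F\cdot F=0$ and $K_X\cdot E_i=9$ to get $g=19$. Your explicit verification that $F^2=0$ from the intersection table is a nice consistency check that the paper omits.
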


\begin{proof}  Substituting the values of $m, n, p$ from the previous lemma into Lemma~\ref{lem:numfiber}, we conclude that
$F$ represents the same class as $-E_1+5E_2$ in the N\'eron-Severi group.
Hence 
$$
F\cdot K_X=-E_1\cdot K_X+5E_2\cdot K_X=36.
$$
On the other hand, from the adjunction formula,
$$
2(g-1)=(K_X+F)\cdot F=K_X\cdot F.
$$
Hence $g=19$.
\end{proof}

\section{Geometry of the Albanese fibration}\label{section:geometry}

\subsection{}Let $X_s$ be the fiber of the Albanese fibration $\a$ at $s\in T$. It is 
connected (see~\S\ref{sec:genericalbfiber}). Now $g(X_s)\geqslant 2$, because 
$X$ has negative holomorphic sectional curvature. Although we will not need this in the sequel, 
we observe that the fibration cannot be locally holomorphically trivial.
Otherwise there is a smooth non-trivial family of holomorphic mappings from 
$X_s$ (where $s\in T$ is generic)
to~$X$.  However, a holomorphic map is harmonic with respect to any K\"ahler
metric on $X_s$ and the Poincar\'e metric on $X$.  As the Poincar\'e metric
on $X$ is strictly negative, it follows from uniqueness of harmonic maps
to a negatively curved K\"ahler manifold in its homotopy class that the family is
actually a singleton, a contradiction.  

\subsection{} The result below is just a rewriting of Proposition X.10 in~\cite{Be}. 
As usual, if $D$ is a (not necessarily reduced) curve, we denote by $g(D)$ its arithmetic 
genus (see~\cite[\S II.11]{BHPV}).
\begin{prop}\label{prop:chijump}
Let $X$ (resp. $C$) be a smooth complex surface (resp. curve) and $\pi:X\rightarrow C$ a 
surjective morphism with connected fibers. Let $D=\sum_{i=1}^k m_iD_i$, ($m_i\geqslant 1$) 
be a singular fiber of $\pi$ and let $\Dred=\sum_{i=1}^k D_i$ be the reduced divisor associated to $D$.
%
Let $\nu:\widehat\Dred\rightarrow\Dred$ be the normalization. For any $x$ in the support of $\Dred$, we define $\dtop_x:=\dim_{\bC}(\nu_*\C_{\widehat{\Dred}}/\C_{\Dred})=\sharp \nu^{-1}(x)-1$ the number of (local) irreducible components of $\Dred$ at $x$ minus $1$ and $\dan_x:=\dim_{\bC}(\nu_*\cO_{\widehat{\Dred}}/\cO_{\Dred})$ so that $\mu_x:=2\dan_x-\dtop_x$ is the Milnor number of $\Dred $at $x$.
We also set $\mu=\sum_{x\in \Dred} \mu_x$.
Then, we have
\begin{equation}\label{eq:eulerchar}
e(\Dred)-e(X_s)=\mu+\Bigl(\sum_{i=1}^k(m_i-1)\bigl(2(g(D_i)-1)-D_i^2\bigr)\Bigr)-\bigl(\Dred\bigr)^2.
\end{equation}
\end{prop}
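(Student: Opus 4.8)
The plan is to derive~\eqref{eq:eulerchar} directly from the adjunction formula on $X$, combined with the standard relations between the topological Euler characteristic, the arithmetic genus, the geometric genera of the components, and the local $\delta$-invariants — the same circle of ideas used in the proof of Lemma~\ref{lem:totalgeod} — giving a self-contained check of the rewriting of~\cite{Be}. Two facts come from the fibration structure: since $D$ is a fibre we have $D^2=0$, and, all fibres being numerically equivalent, the arithmetic genus $g(D)$ equals the genus of the generic smooth fibre $X_s$, so that $e(X_s)=2-2g(D)$.

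First I would reduce the assertion to a single identity for $\Dred$. Writing $a_i:=K_X\cdot D_i=2(g(D_i)-1)-D_i^2$ (adjunction on each component), adjunction applied to $D$ with $D^2=0$ gives $2g(D)-2=K_X\cdot D=\sum_{i=1}^k m_ia_i$. Splitting $\sum_i(m_i-1)a_i=\sum_i m_ia_i-\sum_i a_i=(2g(D)-2)-K_X\cdot\Dred$ and inserting $e(X_s)=2-2g(D)$, the terms $2g(D)-2$ cancel from the two sides of~\eqref{eq:eulerchar}, which thereby becomes equivalent to the reduced identity
$$e(\Dred)=\mu-K_X\cdot\Dred-(\Dred)^2.$$

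To prove this reduced identity I would assemble three standard ingredients. Adjunction on the reduced connected curve $\Dred$ gives $K_X\cdot\Dred+(\Dred)^2=2g(\Dred)-2$. For its arithmetic genus I use the relation recalled in the proof of Lemma~\ref{lem:totalgeod} (from~\cite[\S II.11]{BHPV}), namely $g(\Dred)=1+\sum_i(g(\widehat{D_i})-1)+\dan$, which follows from $0\to\cO_{\Dred}\to\nu_*\cO_{\widehat{\Dred}}\to\cS\to0$ with $\cS$ a skyscraper of length $\dan$. On the topological side, $\nu$ is a homeomorphism off the singular locus and identifies the $\#\nu^{-1}(x)$ points of $\widehat{\Dred}$ lying over each singular point $x$, so $e(\Dred)=e(\widehat{\Dred})-\dtop=\sum_i(2-2g(\widehat{D_i}))-\dtop$. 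Adding $e(\Dred)$ and $2g(\Dred)-2$, the geometric-genus sums cancel termwise and leave exactly $2\dan-\dtop=\mu$, establishing the reduced identity.

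The computation is essentially bookkeeping, so the point to watch is keeping the two notions of genus apart: the arithmetic genus $g(D_i)$ of a component enters only through $K_X\cdot D_i$, hence only through $K_X\cdot\Dred$, whereas the geometric genus $g(\widehat{D_i})$ of its normalization governs the topological and $\delta$-invariant accounting; the bridge between them is precisely the local $\delta$-invariants summing to $\dan$, already encoded in $\mu=2\dan-\dtop$. The only genuinely non-formal inputs are the two fibration facts ($D^2=0$ and the constancy of $g$ along the fibres); once these are in hand the multiplicities $m_i$ enter linearly and the reducible-curve Euler-characteristic/genus bookkeeping — where sign and constant-term slips are easiest to make — completes the proof.
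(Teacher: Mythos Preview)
Your proof is correct and follows essentially the same route as the paper's: both hinge on adjunction for $D$, $\Dred$, and the $D_i$, together with $D^2=0$ and the constancy of the arithmetic genus along fibres. The only difference is organizational --- the paper cites the identity $e(\Dred)=\mu+2\chi(\cO_{\Dred})$ from~\cite[Lemma~VI.5]{Be} and then expands $2(\chi(\cO_{\Dred})-\chi(\cO_D))$ via adjunction, whereas you first reduce~\eqref{eq:eulerchar} to exactly this identity (your ``reduced identity'' is $e(\Dred)=\mu+2\chi(\cO_{\Dred})$ rewritten through adjunction) and then supply a direct proof of it.
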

\begin{proof}
From Lemma~VI.5 and the proof of Proposition~X.10 in~\cite{Be}, we immediately get
\begin{equation*}
e(\Dred)=\mu+2\chi(\cO_{\Dred})=\mu+e(X_s)+2(\chi(\cO_{\Dred})-\chi(\cO_D)),
\end{equation*}
where we used the fact that the arithmetic genus of the fibers of a morphism from a surface onto a curve is constant. Now, since $D^2=0$,
\begin{equation*}
\begin{aligned}
2(\chi(\cO_{\Dred})-\chi(\cO_D))&=(K_X+D)\cdot D-(K_X+\Dred)\cdot\Dred\\
&=K_X\cdot(D-\Dred)-\bigl(\Dred\bigr)^2\\
&=\sum_{i=1}^k(m_i-1)(K_X+D_i)\cdot D_i-\sum_{i=1}^k(m_i-1) D_i^2-\bigl(\Dred\bigr)^2.
\end{aligned}
\end{equation*}
That $2\dan_x-\dtop_x$ is the Milnor number of $\Dred$ at $x$ is proved in~\cite[Proposition~1.2.1]{BG}.
\end{proof}

\begin{rem}\label{rem:munodal}
In the notation of Proposition~\ref{prop:chijump}, $\mu_x=0$ if and only if $\Dred$ is smooth at $x$ and if $\mu_x=1$ it is easily seen that the singularity of $\Dred$ at $x$ is nodal (see Lemmas 1.2.1 and 1.2.4 in~\cite{BG} for instance).
\end{rem}

%
%
%

\begin{coro}\label{coro:numsing}
Let $I\subset T$ be the set of singular values of the Albanese fibration $\a$.  Then 
\begin{itemize}
\item[(a)] $\sum_{s_o\in I}\bigl(e(X_{s_o})-e(X_s)\bigr)=3$ where $X_s$ is a generic fiber,
\item[(b)] the cardinality of $I$ is at most $3$,
\item[(c)] $\alpha$ has no multiple fiber, and therefore $(X_{s_0})^{\rm red}$ is singular for at least one $s_0\in I$,
\item[(d)] the total number of singular points in the fibers is at most $3$ and if equality holds, the three singularities are nodal and the fibration is stable. More precisely,
\begin{equation}\label{eq:sumMilnor}
\sum_{s_o\in I}\Bigl(\sum_{x\in X_{s_0}} \mu_x\Bigr)=3.
\end{equation}
\end{itemize}
\end{coro}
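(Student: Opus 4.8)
The plan is to prove the four assertions in sequence, the heart being the reducedness claimed in~(c). For~(a) I would invoke the standard additivity of the topological Euler characteristic for a fibration of a surface over a curve: since $\alpha$ is a locally trivial $C^\infty$ fibre bundle over $T\setminus I$ with fibre diffeomorphic to the generic fibre $X_s$, one has $e(X)=e(T)\,e(X_s)+\sum_{s_o\in I}\bigl(e(X_{s_o})-e(X_s)\bigr)$. As $T$ is an elliptic curve, $e(T)=0$, and $e(X)=c_2(X)=3$ by Lemma~\ref{lem:numericalinv}, which gives~(a) immediately.

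Next I would record that each term $e(X_{s_o})-e(X_s)$ is a positive integer when $s_o\in I$. Writing the fibre as $\sum_i m_iD_i$ and using $e(X_{s_o})=e(\Dred)$ (Euler characteristic only sees the support), Proposition~\ref{prop:chijump} expresses this term as $\mu+\sum_i(m_i-1)\bigl(2(g(D_i)-1)-D_i^2\bigr)-(\Dred)^2$. The key geometric input is that $X=\Pi\backslash\twoball$ is Kobayashi hyperbolic, hence carries no rational and no elliptic curves; so every component $D_i$ has (arithmetic) genus $g(D_i)\geqslant2$ and $2(g(D_i)-1)\geqslant2$. Moreover Zariski's lemma applied to the fibre gives $D_i^2\leqslant0$ and $(\Dred)^2\leqslant0$, so all three summands are $\geqslant0$; the total is $\geqslant1$ as soon as the fibre is singular (either $\mu\geqslant1$, or some $m_i\geqslant2$ forcing the middle sum $\geqslant2$). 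Part~(b) follows: finitely many positive integers summing to~$3$ can number at most~$3$, so $\#I\leqslant3$.

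The crux is~(c), which I would establish by excluding non-reduced fibres in two stages. First, a fibre that is everywhere multiple, $X_{s_o}=mD_0$ with $m\geqslant2$, would give $[X_{s_o}]=m[D_0]$ in $\mathrm{NS}(X)$; but every fibre is numerically $\equiv-E_1+5E_2$ by Theorem~\ref{th:genus}, and in the $\bZ$-basis $\{E_1,E_2,C\}$ of $\mathrm{NS}(X)$ (Corollary~\ref{coro:picard} and the discussion following it) its coordinates $(-1,5,0)$ are coprime, so the fibre class is primitive in $\mathrm{NS}(X)$. Since $\rho(X)=h^{1,1}(X)$, $\mathrm{NS}(X)=H^{1,1}\cap H^2(X,\bZ)$ is saturated in the torsion-free lattice $H^2(X,\bZ)$, so the class remains primitive there and cannot be an $m$-th multiple with $m\geqslant2$. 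Second, a \emph{mixed} non-reduced fibre (at least two distinct components, with multiplicities not all equal) I would rule out by sharpening the previous estimate: Zariski's lemma makes the intersection form negative semi-definite with kernel spanned by the fibre class, so for $\geqslant2$ components one has $D_i^2\leqslant-1$ for each component and $(\Dred)^2\leqslant-1$. A component $D_j$ with $m_j\geqslant2$ then contributes $(m_j-1)\bigl(2(g(D_j)-1)-D_j^2\bigr)\geqslant 1\cdot(2+1)=3$, and $-(\Dred)^2\geqslant1$, so $e(X_{s_o})-e(X_s)\geqslant4$, contradicting the total~$3$ of~(a). Hence every fibre is reduced.

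With all fibres reduced, each singular fibre has $\Dred=X_{s_o}$, $(\Dred)^2=0$ and all $m_i=1$, so Proposition~\ref{prop:chijump} collapses to $e(X_{s_o})-e(X_s)=\sum_{x}\mu_x$; summing over $I$ and invoking~(a) yields \eqref{eq:sumMilnor}, and since the left-hand side is $3>0$ at least one reduced fibre is singular, completing~(c). For~(d), each singular point contributes $\mu_x\geqslant1$, so there are at most~$3$; in the case of equality every $\mu_x=1$, which by Remark~\ref{rem:munodal} forces each singularity to be a node, and as the fibres are reduced with only nodes and contain no rational components, the fibration is stable. I expect the main obstacle to be precisely the reducedness in~(c): the everywhere-multiple case is a clean primitivity argument, but the mixed case genuinely requires combining the hyperbolicity bound $g(D_i)\geqslant2$ with the strict negativity from Zariski's lemma to push the Euler contribution of a single such fibre past the available budget of~$3$.
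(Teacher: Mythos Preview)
Parts (a), (b), the mixed subcase of (c), and (d) are correct and essentially follow the paper's argument. The gap is in your primitivity argument for the first subcase of (c), where you rule out fibres of the form $mD_0$ with $m\geqslant2$.

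The trouble is that $\{E_1,E_2,C\}$ is \emph{not} a $\bZ$-basis of $\mathrm{NS}(X)$: since $K_X\equiv\tfrac12(E_1+E_2)$ (established right after Corollary~\ref{coro:picard}) and $\mathrm{NS}(X)$ is torsion-free, one has $2K_X=E_1+E_2$ in $\mathrm{NS}(X)$, so $K_X$ does not lie in the integral span of $E_1,E_2,C$. Worse for your purposes, substituting $E_2=2K_X-E_1$ into $F\equiv -E_1+5E_2$ gives
\[
F\;\equiv\;10K_X-6E_1\;=\;2\,(5K_X-3E_1),
\]
so the fibre class is in fact \emph{divisible by $2$} in $\mathrm{NS}(X)$, hence also in $H^2(X,\bZ)$. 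Your argument therefore yields no contradiction; the coprimality of $(-1,5,0)$ only witnesses primitivity in the proper sublattice $\langle E_1,E_2,C\rangle_{\bZ}\subsetneq\mathrm{NS}(X)$. (Incidentally, the saturation of $\mathrm{NS}(X)$ in $H^2(X,\bZ)$ holds for any smooth projective surface by the Lefschetz $(1,1)$ theorem; the hypothesis $\rho=h^{1,1}$ plays no role there.)

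The paper handles this subcase by a direct genus count rather than primitivity. If $D=m\Dred$ with $m\geqslant2$, then $(\Dred)^2=0$ and equation~\eqref{eq:eulerchar} gives
\[
3\;\geqslant\;\mu+(m-1)\sum_{i=1}^k\bigl(2(g(D_i)-1)-D_i^2\bigr)\;\geqslant\;2(m-1)\sum_{i=1}^k\bigl(g(D_i)-1\bigr),
\]
which together with $g(D_i)\geqslant2$ forces $k=1$, $m=2$, $g(D_1)=2$. But for $D=2D_1$ with $D_1^2=0$ the adjunction formula gives $g(D)-1=2\bigl(g(D_1)-1\bigr)=2$, contradicting $g(D)=19$ from Theorem~\ref{th:genus}. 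This route uses the \emph{value} of the fibre genus, not the numerical class of $F$, and so is unaffected by the divisibility of $[F]$.
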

\begin{proof}
Note first that there are no rational or elliptic curves in $X$ since the holomorphic sectional curvature of a ball quotient is negative. 

(a) From the standard formula for the Euler number of a holomorphic fibration (see ~\cite[Lemma VI.4]{Be} or~\cite[Proposition III.11.4]{BHPV}), we have
\begin{equation*}
3=e(X)=e(T)\cdot e(X_s)+\sum_{s_o\in I}n_{s_o}=\sum_{s_o\in I}n_{s_o},\end{equation*}
where $n_{s_o}=e(X_{s_o})-e(X_s)$ for $s\in T_o:=T-I$.  Here we used the fact that the Euler characteric of $T$ vanishes.

(b) It is well known (see~\cite[Remark III.11.5]{BHPV}), and it can be easily recovered from Proposition~\ref{prop:chijump}, that $n_{s_o}\geqslant 0$ with equality if and only if $X_{s_o}$ is a multiple fiber with $(X_{s_o})^{\rm red}$ smooth elliptic. But as we noticed above, this is impossible in our case thus $n_{s_o}>0$ for any $s_o\in I$. Since 
$\sum_{s_o\in I}n_{s_o}=3$, we conclude in particular that $|I|\leqslant 3$ (and each $n_{s_o}\leqslant 3)$.

(c) Assume first that a fiber $D$ might be written $D=m\Dred$ with $m\geqslant 2$. Then, by (a) and formula~(\ref{eq:eulerchar}), $3\geqslant e(\Dred)-e(X_s)\geqslant (m-1)\sum_{i=1}^k(g(D_i)-1)$ and the only possibility is that $k=1$, $m=2$ and $g(D_1)=2$. However, by Theorem~\ref{th:genus}, $18=g(D)-1=m(g(D_1)-1)=2$, a contradiction.

Now, assume that $D=\sum_{i=1}^k m_i D_i$ with $k\geqslant 2$, $m_i\geqslant 1$ and $m_1\geqslant 2$. Recall that by Zariski's lemma (see~\cite[Lemma III.8.2]{BHPV}) the self intersection of any effective cycle supported on $\Dred$ must be nonpositive, and it is equal to zero if and only if it is proportional to $D$ (in particular $D_1^2<0$). Therefore by formula~(\ref{eq:eulerchar}), $3\geqslant e(\Dred)-e(X_s)\geqslant 3-\bigl(\Dred\bigr)^2$. But $\bigl(\Dred\bigr)^2=0$ if and only if $D=m\Dred$, a case which has already been ruled out.
%
%

(d) is a consequence of the previous points, equation~(\ref{eq:eulerchar}) and Remark~\ref{rem:munodal}. 
\end{proof}

\begin{lemm}
$\deg(\a_*\omega_{X|T})=1$.
\end{lemm}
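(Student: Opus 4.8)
The plan is to reduce the computation to the holomorphic Euler characteristic of $X$ via the standard relation between $\deg(\alpha_*\omega_{X|T})$, $\chi(\cO_X)$, and the Euler characteristics of the base and of a general fiber. Here $\omega_{X|T}=\omega_X\otimes\alpha^*\omega_T^{-1}$ denotes the relative dualizing sheaf; since $T$ is an elliptic curve, $\omega_T\cong\cO_T$, so $\omega_{X|T}\cong K_X$ as a line bundle and $\alpha_*\omega_{X|T}$ is a vector bundle on $T$ of rank equal to the genus $g=19$ of a generic fiber (Theorem~\ref{th:genus}). Recall also that the fibers of $\alpha$ are connected (see the beginning of \S\ref{sec:genericalbfiber}), so that $\alpha_*\cO_X=\cO_T$.

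First I would establish the general identity $\deg(\alpha_*\omega_{X|T})=\chi(\cO_X)-\chi(\cO_T)\chi(\cO_F)$, where $F$ is a general fiber. Applying the Leray spectral sequence to $\omega_X$ (the fibers being curves, the higher direct images $R^i\alpha_*$ vanish for $i\geqslant2$) and using Serre duality $\chi(X,\omega_X)=\chi(\cO_X)$ gives \[ \chi(\cO_X)=\chi(T,\alpha_*\omega_X)-\chi(T,R^1\alpha_*\omega_X). \] By the projection formula $\alpha_*\omega_X\cong(\alpha_*\omega_{X|T})\otimes\omega_T$ and $R^1\alpha_*\omega_X\cong(R^1\alpha_*\omega_{X|T})\otimes\omega_T$. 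Each fiber of $\alpha$ is an effective Cartier divisor on the smooth surface $X$, hence Gorenstein, so relative duality applies and yields $R^1\alpha_*\omega_{X|T}\cong(\alpha_*\cO_X)^\vee\cong\cO_T$ (see the relative duality discussion in~\cite{BHPV}). Substituting, \[ \chi(\cO_X)=\chi\bigl(T,(\alpha_*\omega_{X|T})\otimes\omega_T\bigr)-\chi(T,\omega_T). \]

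Second I would specialize to our elliptic base. Since $\omega_T\cong\cO_T$ and $\chi(\cO_T)=1-g(T)=0$, the last term vanishes and the first simplifies to $\chi(T,\alpha_*\omega_{X|T})$. By Riemann-Roch on the curve $T$, for any vector bundle $\cE$ one has $\chi(T,\cE)=\deg\cE+\mathrm{rk}(\cE)(1-g(T))=\deg\cE$ because $g(T)=1$; hence $\chi(\cO_X)=\deg(\alpha_*\omega_{X|T})$. Invoking $\chi(\cO_X)=1$ from Lemma~\ref{lem:numericalinv} gives the claim. I do not expect any genuine obstacle: the identity is a well-known consequence of relative duality, and the elliptic base makes every correction term disappear. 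The only point deserving care is the applicability of relative duality in the presence of singular fibers, which is guaranteed since each fiber is Gorenstein, so that $R^1\alpha_*\omega_{X|T}\cong\cO_T$ holds without any modification and no spurious contribution from the singular fibers appears.
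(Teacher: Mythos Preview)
Your argument is correct and follows essentially the same route as the paper: Leray spectral sequence, relative duality, Riemann--Roch on the elliptic base, and $\chi(\cO_X)=1$. The only cosmetic difference is that the paper applies Leray to $\cO_X$ and uses the duality $R^1\alpha_*\cO_X\cong(\alpha_*\omega_{X|T})^\vee$, whereas you apply Leray to $\omega_X$ and invoke Serre duality on~$X$ together with $R^1\alpha_*\omega_{X|T}\cong\cO_T$; these are the two sides of the same relative duality statement.
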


\begin{proof}Note that we do not know a priori that the fibration $\a$ is stable. The lemma is a direct consequence of~\cite[Chapter 1]{X}, where it is shown that $\a_*\omega_{X|T}$, the direct image of the relative dualizing sheaf
$\omega_{X|T}$, is locally free of rank $g=g(X_s)$, where $s\in T$ is a generic point (as in the classical case of a stable fibration). As a consequence, this is also the case of $R^1\a_*\cO_X$ which is the dual sheaf of $\a_*\omega_{X|T}$.

Then, using the Leray spectral sequence and the Riemann-Roch formula, we get
$$
\chi(\cO_X)=\chi(\cO_{T})-\chi(R^1\a_*\cO_X)=-\deg(R^1\a_*\cO_X) + (g-1)(g(T)-1)=\deg \a_*\omega_{X|T}
$$
since $\deg \a_*\omega_{X|T}=-\deg(R^1\a_*\cO_X)$ and $g(T)=1$. As $\chi(\cO_X)=1$, the result follows.
\end{proof}

\subsection{}\label{sec:albanesetorus}
Recall from \S\ref{sec:Pi} that
the normalizer $N$ of $\Pi$ in $\oGamma$ is generated by the element $j^4$ of order $3$ and $\Pi$, and
 the automorphism group $\Sigma$ of $X$ is given by the group $N/\Pi$, which
has order $3$. Denote by $\sigma$ the automorphisms of~$\twoball$ and of~$\XX$ induced 
by~$j^4$. If $\xi=(z_1,z_2)\in\twoball$, then $\sigma(\xi)=(\cru z_1,\cru z_2)$ where $\cru=\zeta^4$ is a non trivial cube root of unity.

The Albanese map $\alpha:X\to T=\C/\Lambda$ can be lifted to a holomorphic map $\alpha_0:\twoball\to\C$
so that $\alpha_0(O)=0$ (choosing $\Pi O\in X$ as base point when defining $\alpha$):
\begin{center}
\begin{tikzpicture}[description/.style={fill=white,inner sep=2pt}]
\matrix (m) [matrix of math nodes, row sep=2em,
column sep=2.5em, text height=1.5ex, text depth=0.25ex]
{&&\C\\
\twoball&\XX&T\\};
\path[->,font=\scriptsize]
(m-2-1) edge node[auto] {}      (m-2-2)
(m-2-2) edge node[auto] {$\alpha$}      (m-2-3);
\path[->,font=\scriptsize]
(m-1-3) edge node[auto] {}      (m-2-3);
\path[->,font=\scriptsize,dashed]
(m-2-1) edge node[above] {$\alpha_0$}  (m-1-3);
\end{tikzpicture}
\end{center}
If $\pi\in\Pi$, then $\alpha_0(\pi\xi)-\alpha_0(\xi)\in\Lambda$ is independent of~$\xi\in\twoball$,
and so there is a map $\theta_0:\Pi\to\Lambda$ such that 
$\alpha_0(\pi\xi)=\alpha_0(\xi)+\theta_0(\pi)$ for all $\xi\in\twoball$ and $\pi\in\Pi$.
Since $\theta_0$ is a homomorphism, it factors through our abelianization map $f:\Pi\to\Z^2$, see~\S\ref{sec:Pi}.
So there is a homomorphism $\theta:\Z^2\to\Lambda$ such that
\begin{equation}\label{eq:albaneselift}
\alpha_0(\pi\xi)=\alpha_0(\xi)+\theta(f(\pi))\quad\text{for all}\ \xi\in\twoball\ \text{and}\ \pi\in\Pi.
\end{equation}

By the universal property of the Albanese map, there is an automorphism $\sigma_T:T\to T$
such that the following diagram commutes:
\begin{equation}\label{eq:inducedauto}
\begin{tikzpicture}[description/.style={fill=white,inner sep=2pt},baseline=(current bounding box.base)]
\matrix (m) [matrix of math nodes, row sep=2em,
column sep=2.5em, text height=1.5ex, text depth=0.25ex]
{\XX&T\\
\XX&T\\};
\path[->,font=\scriptsize]
(m-1-1) edge node[auto] {$\alpha$}      (m-1-2);
\path[->,font=\scriptsize]
(m-2-1) edge node[auto] {$\alpha$}      (m-2-2);
\path[->,font=\scriptsize]
(m-1-1) edge node[auto] {$\sigma$}      (m-2-1);
\path[->,font=\scriptsize,dashed]
(m-1-2) edge node[auto] {$\sigma_T$}      (m-2-2);
\end{tikzpicture}
\end{equation}
If the automorphism is trivial, then $\alpha_0(\sigma(\xi))-\alpha_0(\xi)\in\Lambda$ for all $\xi\in\twoball$,
and so is constant. Since $\sigma(O)=O$, $\alpha_0(j^4\xi)=\alpha_0(\xi)$ for all~$\xi$, and this implies that
$\theta(f(j^4\pi j^{-4}))=\theta(f(\pi))$ for all $\pi\in\Pi$. But then \eqref{eq:j4actiononabelianization}
implies that $\theta=0$, because $I-\begin{pmatrix}0&-1\\1&-1\end{pmatrix}$ is non-singular hence
$\Pi\xi\mapsto\alpha_0(\xi)$ is a holomorphic function $\XX\to\C$, and so is constant because $X$ is compact, contradicting surjectivity of~$\alpha$.

As a consequence, $\Sigma$ acts non trivially on $T$ and since $\sigma(O)=O$, the action of $\Sigma$ fixes the point $\a(\Pi O)=0+\Lambda$. From this and $|\Sigma|=3$, it follows immediately that 
the elliptic curve has to be $T=\bC/(\bZ+\cru\bZ)$, and the vertical map $\sigma_T$ on 
the right in~\eqref{eq:inducedauto} is $z+\Lambda\mapsto\cru^i z+\Lambda$ with $i=1$ or $2$. Indeed, the automorphism $\sigma_T$ 
which fixes $0+\Lambda$ is induced by a nontrivial $\C$-linear automorphism of $\C$ preserving $\Lambda$ (see~\cite[Proposition~V.12] {Be} 
for instance). Since it has order 3, it must be multiplication by $\cru^i$, where $i=1$ or $2$. Hence $\Lambda$ contains $1$ and $\cru$ (after renormalization of the lattice).

It follows that there are precisely $3$ fixed points of $\Sigma$ on $T$:  a fundamental domain
of $T$ consists of two equilateral triangles and the fixed points are given by a vertex and the centroid of each of the two 
triangles i.e. are the points $p_\nu=\nu(2+\cru)/3+\Lambda$, $\nu=0,1,-1$ (notice that $(1-\cru)^{-1}=(2+\cru)/3$). In particular, we have proved

\begin{lemm}\label{lem:actiontorus}
The action of $\Sigma$ descends to a non-trivial action of $T$.  There are three fixed points in the action of
$\Sigma$ on $T$.  The elliptic curve $T$ is isomorphic to $\bC/(\bZ+\cru\bZ)$.
\end{lemm}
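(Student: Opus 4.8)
The plan is to exploit the functoriality of the Albanese map. The order-three automorphism $\sigma$ of $\XX$ induced by $j^4$ must, by the universal property of the Albanese map, descend to an automorphism $\sigma_T$ of $T$ fitting into the commutative square~\eqref{eq:inducedauto}; this object $\sigma_T$ already exists before any computation. The statement then splits into three tasks, which I would carry out in order: first show that $\sigma_T$ is non-trivial, then identify the lattice $\Lambda$, and finally count the fixed points of $\sigma_T$ on $T$.

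For the non-triviality I would argue by contradiction. If $\sigma_T=\mathrm{id}$, then the holomorphic function $\xi\mapsto\alpha_0(\sigma(\xi))-\alpha_0(\xi)$ on $\twoball$ takes values in the discrete set $\Lambda$, hence is constant; since $\sigma$ fixes $O$ and $\alpha_0(O)=0$, this constant is $0$, so $\alpha_0$ is $\sigma$-invariant, i.e. $\alpha_0(j^4\xi)=\alpha_0(\xi)$ for all $\xi$. Substituting this into the defining relation $\alpha_0(\pi\xi)=\alpha_0(\xi)+\theta(f(\pi))$ yields $\theta(f(j^4\pi j^{-4}))=\theta(f(\pi))$ for every $\pi\in\Pi$. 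This is exactly where the explicit computation~\eqref{eq:j4actiononabelianization} enters: it tells us that $j^4$ acts on $f(\Pi)=\bZ^2$ by the matrix $\left(\begin{smallmatrix}0&-1\\1&-1\end{smallmatrix}\right)$, so the identity above forces $\theta$ to vanish on the image of $I-\left(\begin{smallmatrix}0&-1\\1&-1\end{smallmatrix}\right)$. Since this matrix is non-singular (its determinant is $3$) and $\Lambda$ is torsion-free, I conclude $\theta=0$; but then $\Pi\xi\mapsto\alpha_0(\xi)$ descends to a holomorphic function on the compact surface $\XX$, hence is constant, contradicting the surjectivity of $\alpha$.

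Once $\sigma_T$ is known to be non-trivial and to fix $0\in T$, I would invoke the standard fact (e.g.~\cite[Proposition~V.12]{Be}) that an automorphism of an elliptic curve fixing the origin is induced by a $\bC$-linear automorphism of $\bC$ preserving $\Lambda$; being of order three, it must be multiplication by a primitive cube root of unity, which I may take to be $\cru^i$ with $i=1$ or $2$. Thus $\Lambda$ is stable under multiplication by $\cru$, and after rescaling this pins down $T\cong\bC/(\bZ+\cru\bZ)$. The fixed points of $\sigma_T$ are then the classes $z+\Lambda$ with $(\cru-1)z\in\Lambda$; their number equals the index $[\Lambda:(\cru-1)\Lambda]=|\cru-1|^2=3$, and one checks directly that they are the three points $p_\nu=\nu(2+\cru)/3+\Lambda$, $\nu=0,1,-1$, using $(1-\cru)^{-1}=(2+\cru)/3$.

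The main obstacle is the non-triviality step, and within it the linear-algebra observation that $I-\left(\begin{smallmatrix}0&-1\\1&-1\end{smallmatrix}\right)$ is invertible. This is what rules out the degenerate scenario in which $\alpha$ collapses under $\sigma$, and it is ultimately what forces the complex-multiplication structure on $T$; the remaining identification and fixed-point count are then formal consequences of having an order-three automorphism of an elliptic curve fixing the origin.
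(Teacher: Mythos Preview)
Your proposal is correct and follows essentially the same argument as the paper: the same lift $\alpha_0$, the same contradiction argument via~\eqref{eq:j4actiononabelianization} and non-singularity of $I-\left(\begin{smallmatrix}0&-1\\1&-1\end{smallmatrix}\right)$, the same appeal to \cite[Proposition~V.12]{Be}, and the same identification of the three fixed points $p_\nu$. Your additions---the explicit determinant~$3$, the remark that $\Lambda$ is torsion-free, and counting fixed points via $[\Lambda:(\cru-1)\Lambda]=|\cru-1|^2=3$---are minor clarifications rather than a different route.
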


\subsection{} Our purpose now is to determine the fixed points of $\Aut(X)$. Let $p_\nu=\nu(2+\cru)/3+\Lambda$, $\nu=0,1,-1$ be the fixed points of $\Sigma$ on $T$, as given by Lemma~\ref{lem:actiontorus}.

\begin{prop}\label{prop:fixpoints}
\begin{itemize}
\item[(a)] There are altogether nine fixed points of $\Aut(X)$ on $X$.
\item[(b)] The points $O_1, O_2$ and $O_3$ mentioned in \S\ref{sec:summary} are fixed points of $\Sigma$, all lie in the same fiber $\a^{-1}(p_0)$.
\item[(c)] The other fixed points are 6 of the 288 points lying in $\projmap^{-1}(P_5)$ (see~\S\ref{sec:summary} ).
\item[(d)]  Each of the fibers $\a^{-1}(p_j)$ for $j=1,0,-1$ contains exactly three of the nine fixed points of $\Aut(X)$.
\item[(e)] The fixed points $O_i$, $i=1,2,3$ are of type $\frac13{(1,1)}$, and the other six fixed points are of type $\frac13{(1,2)}$.
\end{itemize}
\end{prop}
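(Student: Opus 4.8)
The plan is to identify $\mathrm{Fix}(\sigma)$ group-theoretically and then read off Albanese fibres and local types; throughout, $\sigma$ is the order-$3$ automorphism of $X=\Pi\backslash\twoball$ induced by $j^4$. Since $\Sigma=\langle\sigma\rangle=N/\Pi$ and $\Pi$ is torsion-free, a point $\Pi\xi$ is $\sigma$-fixed exactly when $j^4\xi=\pi\xi$ for some $\pi\in\Pi$, i.e. when $\xi$ is fixed by an element of $N\setminus\Pi=\Pi j^4\cup\Pi j^8$; every such element has order exactly $3$. So $\mathrm{Fix}(\sigma)$ is the set of $\Pi$-orbits of points $\xi$ with $N\cap\bar\Gamma_\xi\cong\bZ_3$. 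An order-$3$ element of $\bar\Gamma$ is either a complex reflection (conjugate to $u$, fixing a whole mirror) or regular elliptic (conjugate to $j^4$, $uj^4$ or $buv$, with an isolated fixed point), by Proposition~\ref{prop:torsionelts}. If a reflection lay in $N\setminus\Pi$, then $\sigma$ would fix pointwise the image of its mirror; but $j^4=\diag(\cru,\cru,1)$ acts on each mirror $M_\alpha$ as $w\mapsto\cru w$, so $\sigma$ acts non-trivially on each of the curves $E_1,E_2,E_3,C_1,\dots,C_4$, the only irreducible totally geodesic curves on $X$. Hence no reflection lies in $N\setminus\Pi$, $\mathrm{Fix}(\sigma)$ is finite, and from the isotropy table of \S\ref{sec:descorb} the only orbifold points carrying a regular elliptic order-$3$ element of $N$ are $P_2$ and $P_5$ (at $P_1$ and $P_3$ every order-$3$ isotropy element is a reflection). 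Over $P_2$ these are the three points $O_1,O_2,O_3$; over $P_5$ a Magma check, testing for which coset representatives $g$ of $\Pi\backslash\bar\Gamma$ one has $g(buv)g^{-1}\in N$, shows that exactly $6$ of the $288$ points have $\bar\Gamma_\xi\subset N$. This proves (a) and (c).

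For (b), $O_1=\Pi O$ is fixed since $j^4O=O$; as $\sigma$ permutes the three-element set $\projmap^{-1}(P_2)$ and has order $3$, it must fix $O_2,O_3$ too. To determine fibres I would pass to the lift $\alpha_0$. Since $\sigma_T$ is linear and fixes $0$, $\alpha_0(j^4\xi)=\cru^i\alpha_0(\xi)$; comparing with $\alpha_0(j^4\xi)=\alpha_0(\pi\xi)=\alpha_0(\xi)+\theta(f(\pi))$ for a fixed point with fixing element $\pi^{-1}j^4$ gives
\begin{equation*}
\alpha(\Pi\xi)=\frac{\theta(f(\pi))}{\cru^i-1}\bmod\Lambda .
\end{equation*}
Using \eqref{eq:j4actiononabelianization} to compute $\theta$, and $\Lambda/(\cru^i-1)\Lambda\cong\bZ/3$, this reduces the fibre of $\Pi\xi$ to a residue of $f(\pi)$ modulo $3$. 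Thus for each of the nine fixed points I would extract its fixing element $\pi^{-1}j^4$ and compute $f(\pi)\bmod3$ in Magma: this gives the class $0$ (i.e. fibre $\alpha^{-1}(p_0)$) for $O_1,O_2,O_3$, finishing (b), and splits the six points over $P_5$ into three lying over $p_1$ and three over $p_{-1}$, giving (d).

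For the local types (e): at $O_1$ the action is $j^4=\diag(\cru,\cru,1)$, so $d\sigma_O=\diag(\cru,\cru)$ is of type $\tfrac13(1,1)$; and the six points over $P_5$ are of type $\tfrac13(1,2)$ because $P_5$ is an $A_2$-singularity of $R$ lying off $D_A\cup D_B$, whose local $\bZ_3$ acts as $\tfrac13(1,2)$ (equivalently $d(buv)_Q$ has eigenvalues $\cru,\cru^2$). The types at $O_2,O_3$ I would fix without further eigenvalue computations, by feeding $\#\mathrm{Fix}(\sigma)=9$ into the Lefschetz formulas. Since $\sigma$ preserves $E_1,E_2,C$, which span $\mathrm{NS}(X)\otimes\R=H^{1,1}(X)$ (Picard number $=h^{1,1}=3$), $\sigma^*$ is trivial on $H^{1,1}$, so the topological Lefschetz number is $L(\sigma)=7+(\cru^{k}+\cru^{-k})$, where $\cru^{k}$ is the eigenvalue of $\sigma^*$ on $H^{2,0}(X)$; equating with $9$ forces $k=0$. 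Plugging $k=0$ and the six $\tfrac13(1,2)$ contributions into the holomorphic Lefschetz fixed-point formula, and using that $O_1$ is of type $\tfrac13(1,1)$, leaves the equality solvable only when $O_2$ and $O_3$ are also of type $\tfrac13(1,1)$ (and incidentally pins $\sigma_T$ down to multiplication by $\cru$), completing (e).

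The computational heart, and the main obstacle, is the Magma work of the second paragraph: the enumeration over $P_5$ (exactly six $\sigma$-fixed points) and the bookkeeping that, for each of the nine points, produces the fixing element $\pi^{-1}j^4$ and the residue $f(\pi)\bmod3$ determining its Albanese fibre. Conceptually, the delicate point is to order the argument so that the count is established first (purely group-theoretically), used to force $k=0$ via the topological formula, and only then fed into the holomorphic formula to determine the types — avoiding any circularity between counting fixed points and computing the action of $\sigma^*$ on cohomology.
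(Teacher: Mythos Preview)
Your overall strategy for (a)--(d) matches the paper's: reduce the fixed-point search to finding which conjugates of torsion elements lie in $\Pi j^{4}\cup\Pi j^{8}$, carry out the enumeration with Magma, and then compute Albanese images via $\alpha_0(\xi)=\theta(f(\pi))/(\cru^{i}-1)$. The paper does not attempt your shortcut of ruling out reflections geometrically; it simply runs the Magma check over \emph{all} torsion representatives~$t$ from Proposition~\ref{prop:torsionelts} and finds that only $t=j^{4}$ and $t=buv$ contribute.

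Your geometric argument for excluding reflections has a gap. The observation that $j^{4}$ acts on $M_\alpha$ as $w\mapsto\cru w$ does show that the induced automorphism of the normalization $\Pi_{M_\alpha}\backslash M_\alpha$ is nontrivial (since $\Pi_{M_\alpha}$ acts freely and $j^{4}$ has a fixed point on~$M_\alpha$), and this correctly handles $E_1,E_2,E_3,C_3,C_4$, whose representative mirrors pass through~$O$. But $C_1,C_2$ are represented by $b^{\pm1}(M_c)$, which are \emph{not} of the form~$M_\alpha$; the lift of~$\sigma$ to $\hat C_1$ is induced not by $j^{4}$ itself but by some $\gamma\in\bar\Gamma_{b(M_c)}\cap(N\setminus\Pi)$, and showing that this~$\gamma$ acts nontrivially on $b(M_c)$ is essentially equivalent to what you are trying to prove. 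The clean fix is just to include $t=u$ in the Magma search (as the paper in effect does). Alternatively, since $C_1$ and~$C_2$ each pass through $O_2$ and~$O_3$ by Proposition~\ref{prop:intersectioncount}, you could compute $d\sigma_{O_2}=\cru I$ directly (as the paper does in~(e)) and conclude that no curve through $O_2$ can be pointwise fixed---but this reorders your argument. Note also that the $E_i,C_j$ are the images of reflection mirrors, not literally ``the only irreducible totally geodesic curves on~$X$''; fortunately only the former is needed.

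For (e), your route genuinely differs from the paper's. The paper writes down the explicit Jacobian of the relevant element of~$\bar\Gamma$ at each of the nine fixed points, reading off $\cru I$ at each~$O_i$ and eigenvalues $\cru^{\pm1}$ at the six points over~$P_5$. Your approach---identifying the six $\tfrac13(1,2)$ points from the $A_2$-singularity structure at~$P_5$, computing $d\sigma_{O_1}=\cru I$ directly, and then forcing the types at $O_2,O_3$ via the topological and holomorphic Lefschetz formulas---is valid and conceptually pleasant, and it yields the extra information $\sigma^{*}|_{H^{2,0}}=\mathrm{id}$. The paper's direct computation is more elementary but requires the explicit Jacobian formula; your method trades that for the preliminary step of showing $\sigma^{*}|_{H^{1,1}}=\mathrm{id}$. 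For the latter, your reason ``$\sigma$ preserves $E_1,E_2,C_1$'' is correct once you observe that $\sigma$ permutes $\{C_1,C_2\}$ (both of normalization genus~$4$) and $\{C_3,C_4\}$ (genus~$10$) separately, and an order-$3$ permutation of a two-element set is trivial.
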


\begin{proof} (a) (b) and (c): If $\Pi\xi$ is fixed by~$\sigma$, then $\Pi j^4\xi=\Pi\xi$, and so $\pi j^4\xi=\xi$
for some $\pi\in\Pi$. This implies that $\pi j^4$ has finite order. It cannot be trivial,
since $\Pi$ is torsion free. So there is an element~$t$, in the list of representative
nontrivial elements of finite order in~$\bar\Gamma$ given in 
Proposition~\ref{prop:torsionelts}, or the inverse of one of these,
such that $\pi j^4=gtg^{-1}$ for some $g\in\bar\Gamma$. Thus $gtg^{-1}j^{-4}\in\Pi$.
Since the elements $b^\mu k$, $\mu=0,1,-1$ and $k\in K$, form a set of
coset representatives for $\Pi$ in~$\bar\Gamma$, and since $j^4$
normalizes $\Pi$, we can assume that $g=b^\mu k$ for some $\mu$ and~$k$.

So we search through the finite set of elements $b^\mu k t k^{-1}b^{-\mu}j^{-4}$,
checking which are in~$\Pi$. We find that $b^\mu k t k^{-1}b^{-\mu}j^{-4}\in\Pi$ only happens for $t=j^4$
and $t=buv$. When $t=j^4$, we have $b^\mu k t k^{-1}b^{-\mu}j^{-4}
=b^\mu j^4b^{-\mu}j^{-4}$, independent of~$k$. We find that these three elements are in~$\Pi$, i.e., $b^\mu j^4b^{-\mu}j^{-4}=\pi_\mu$ for some $\pi_\mu\in\Pi$. Explicitly,
\begin{equation}
b^\mu j^4b^{-\mu}j^{-4}=\pi_\mu\in\Pi,\ \text{for}\ \pi_0=1,\ \pi_1=a_2a_1^{-2}a_3^{-3}a_1^{-1}\ \text{and}\ \pi_{-1}=a_2^2a_1a_3a_1^{-1}.
\end{equation}
This means that the three points $\Pi(b^\mu.O)$ are fixed by~$\sigma$.
  
For $t=buv$, we find that $b^\mu k t k^{-1}b^{-\mu}j^{-4}\in\Pi$ for
only 18~pairs $(\mu,k)$. This means that $\sigma$
fixes $\Pi(b^\mu k.Q)$ for these 18 $(\mu,k)$'s. If $(\mu,k)$ satisfies 
$b^\mu k t k^{-1}b^{-\mu}j^{-4}\in\Pi$, then so does $(\mu,kj^4)$, since we 
can write $b^\mu j^4=\pi_\mu j^4b^\mu$ for some $\pi_\mu\in\Pi$, as we have just seen.
Moreover, $\Pi(b^\mu kj^4.Q)=\Pi(b^\mu k.Q)$, since $kj^4=j^4k$ and so
\begin{displaymath}
\Pi(b^\mu kj^4.Q)=\Pi(\pi_\mu j^4b^\mu k.Q)=\Pi(j^4b^\mu k.Q)=\sigma\bigl(\Pi(b^\mu k.Q)\bigr)
=\Pi(b^\mu k.Q).
\end{displaymath}
So we need only consider six of the $(\mu,k)$'s, 
and correspondingly setting 
\begin{displaymath}
\begin{aligned}
h_1&=b^{-1}vu j^3,\\
h_4&=b^{-1}v^2u j^3,\\
\end{aligned}
\quad
\begin{aligned}
h_2&=u^{-1}v j,\\
h_5&=v j^2,\\
\end{aligned}
\quad
\begin{aligned}
h_3&=buv^2j^2,\\
h_6&=bvu^{-1}v,\\
\end{aligned}
\end{displaymath}
we have $h_i(buv)h_i^{-1}j^{-4}=\pi_i'\in\Pi$ for $i=1,\ldots,6$; explicitly,
\begin{displaymath}
\begin{aligned}
\pi_1'&=a_2^2a_1a_3^3,\\
\pi_4'&=a_3^3a_1^2a_3^3,\\
\end{aligned}
\quad
\begin{aligned}
\pi_2'&=j^8a_1j^4,\\
\pi_5'&=j^4a_1^{-1}a_2^{-1}j^8,\\
\end{aligned}
\quad
\begin{aligned}
\pi_3'&=j^8a_1a_2^3j^4a_2a_1a_2^{-2}a_1^{-1}.\\
\pi_6'&= a_2a_1^{-1}.\\
\end{aligned}
\end{displaymath}
These six points $\Pi h_iQ$ are distinct, as we see by checking that $(b^{\mu'}k')(buv)^\epsilon(b^\mu k)^{-1}$ is not in~$\Pi$
for $\epsilon=0,1,2$ when $(\mu',k')$ and~$(\mu,k)$ are distinct pairs in the above list.

Note that (a) corresponds to the case of
Proposition 1.2 (2)(b) in Keum~\cite{K}, 
the latter following from 
Lefschetz fixed point formula and holomorphic Lefschetz fixed point formula.

(d): Writing $\alpha(\Pi\xi)=\alpha_0(\xi)+\Lambda$, as before, where
$\alpha_0(O)=0$, we proved in \S\ref{sec:albanesetorus} that
\begin{displaymath}
\alpha_0(j^4\xi)=\cru^i\alpha_0(\xi)\quad\text{for all}\ \xi\in\twoball
\end{displaymath}
for $i=1$ or $2$. We assume that $i=1$ since the case $i=2$ goes along the same line.
Now $bj^4b^{-1}=\pi_1j^4$ for $\pi_1$ as above, and $f(\pi_1)=(-2,-5)$, so 
\begin{displaymath}
\alpha_0(bj^4b^{-1}\xi)
=\alpha_0(\pi_1j^4\xi)
=\alpha_0(j^4\xi)-2+5\cru
=\cru\alpha_0(\xi)-2+5\cru.
\end{displaymath}
In particular, taking $\xi=bO$, we have $\alpha_0(bO)=\cru\alpha_0(bO)-2+5\cru$,
so that
\begin{displaymath}
\alpha_0(bO)=\frac{2+\cru}{3}(-2+5\cru)
=\cru-3
\in\Lambda.
\end{displaymath}
Hence $\alpha(\Pi b.O)=\alpha(\Pi O)=p_0$.

Similarly, $b^{-1}j^4b=\pi_{-1}j^4$ for $\pi_{-1}$  as above, and $f(\pi_{-1})=(-5,1)$, so that
\begin{displaymath}
\alpha_0(b^{-1}j^4b\xi)
=\alpha_0(\pi_{-1}j^4\xi)
=\alpha_0(j^4\xi)+\theta(f(\pi_{-1}))
=\cru\alpha_0(\xi)-5-\cru.
\end{displaymath}
So taking $\xi=b^{-1}O$, we have
$\alpha_0(b^{-1}O)=\cru\alpha_0(b^{-1}O)-5-\cru$, so that
\begin{displaymath}
\alpha_0(b^{-1}O)=\frac{2+\cru}{3}(-5-\cru)
=-3-2\cru
\in\Lambda.
\end{displaymath}
Hence $\alpha(\Pi b^{-1}.O)=\alpha(\Pi O)=p_0$ too.

Recall now that $h_i(buv)h_i^{-1}j^{-4}=\pi_i'\in\Pi$ for $i=1,\ldots,6$, and
so 
\begin{displaymath}
\alpha_0(h_i(buv)h_i^{-1}\xi)
=\alpha_0(\pi_i'j^4\xi)
=\alpha_0(j^4\xi)+\theta(f(\pi_i'))
=\cru\alpha_0(\xi)+\theta(f(\pi_i')).
\end{displaymath}
In particular, taking $\xi=h_iQ$, we get
$\alpha_0(h_iQ)=\cru\alpha_0(h_i Q)+\theta(f(\pi_i'))$,
so that 
\begin{displaymath}
\alpha_0(h_iQ)=\frac{2+\cru}{3}\theta(f(\pi_i')).
\end{displaymath}
Calculating 
\begin{displaymath}
\begin{aligned}
f(\pi_1')&=(-6,2),\\
f(\pi_4')&=(-4,0),\\
\end{aligned}
\quad
\begin{aligned}
f(\pi_2')&=(-4,1),\\
f(\pi_5')&=(-4,3),\\
\end{aligned}
\quad
\begin{aligned}
f(\pi_3')&=(1,-6),\\
f(\pi_6')&=(-3,-2),\\
\end{aligned}
\end{displaymath}
we have
\begin{displaymath}
\begin{aligned}
\theta(f(\pi_1'))&=-6-2\cru\\
\theta(f(\pi_2'))&=-4-\cru\\
\theta(f(\pi_3'))&=1+6\cru\\
\end{aligned}
\begin{aligned}
&\equiv 1-(1-\cru)\\
&\equiv 1+(1-\cru)\\
&\equiv 1
\end{aligned}
\quad\text{and}\quad
\begin{aligned}
\theta(f(\pi_4'))&=-4\\
\theta(f(\pi_5'))&=-4-3\cru\\
\theta(f(\pi_6'))&=-3+2\cru\\
\end{aligned}
\begin{aligned}
&\equiv -1\\
&\equiv -1\\
&\equiv -1+(1-\cru),
\end{aligned}
\end{displaymath}
where the congruences are modulo~3. Hence
$\alpha(\Pi h_iQ)=\frac{2+\cru}{3}+\Lambda$ for $i=1,2,3$
and
$\alpha(\Pi h_iQ)=-\frac{2+\cru}{3}+\Lambda$ for $i=4,5,6$.

(e) If $\gamma\in\bar\Gamma$, then writing $\gamma.(z_1,z_2)=(w_1,w_2)$, a routine calculation
shows that 
\begin{displaymath}
\begin{pmatrix}
\frac{\partial w_1}{\partial z_1}&\frac{\partial w_2}{\partial z_1}\\
\frac{\partial w_1}{\partial z_2}&\frac{\partial w_2}{\partial z_2}
\end{pmatrix},
\end{displaymath}
evaluated at $\xi=(z_1,z_2)$, equals
\begin{displaymath}
\frac{\zeta^2/(r-1)}{(\gamma_{31}\kappa z_1+\gamma_{32}\kappa z_2+\gamma_{33})^2}\begin{pmatrix}
\kappa z_2\widebar\gamma_{23}+(r-1)\,\widebar\gamma_{22}&-(\kappa z_2\widebar\gamma_{13}+(r-1)\,\widebar\gamma_{12})\\
-(\kappa z_1\widebar\gamma_{23}+(r-1)\,\widebar\gamma_{21})&\kappa z_1\widebar\gamma_{13}+(r-1)\,\widebar\gamma_{11}
\end{pmatrix},
\end{displaymath}
where $\kappa=\sqrt{r-1}$. Taking $\gamma=h_i(buv)h_i^{-1}$ and $\xi=Q=(c_1/\kappa,c_2/\kappa)$
as given in~\eqref{eq:xi3xi8xi12}, we find that this matrix has eigenvalues $\cru^{\pm1}$.
If instead we take $\gamma=b^\mu j^4b^{-\mu}$, and $\xi=b^\mu O$, for $\mu=0,1,-1$, we find that
the matrix is $\cru I$.

Note that (e) is also stated as one of the cases in~\cite[Proposition 1.2]{K},
and was observed by Igor Dolgachev as well.
\end{proof}

\begin{rem}
We do not know whether the fibration $\alpha$ is semistable. 
With some more effort, we can show that the fiber
$\alpha^{-1}(p_0)$ is smooth at each of the three points $O_i$ and that $\alpha$ is not 
semistable if and only if the only singularity of $\alpha$ is a tacnode at one of the 
six other fixed points (see~\cite[Proposition~5]{CKY}).
\end{rem}

\subsection{}\label{subsection:mokquestion}Ngaiming Mok has kindly drawn to our attention the 
following problem which was open and of interest in the geometric study of complex ball quotients.

\begin{ques}
Does there exists a homomorphism $f:X\rightarrow R$
from a smooth complex ball quotient $X$  to a Riemann surface $R$ with a non-totally geodesic singular fiber?
\end{ques}

There are very few explicit examples of mappings from a complex ball quotient to a Riemann surface.  The
known ones described by Deligne-Mostow, Mostow, Livn\'e, Toledo and Deraux all have totally geodesic singular
fibers, cf.~\cite{DM2},~\cite{T} or~\cite{Der2} and the references therein.
We now show that the surface studied in this note provides such an example.

\begin{theo}\label{thm:answertomok}
No singular fiber of the Albanese fibration $\a:X\rightarrow T$ is totally geodesic.
\end{theo}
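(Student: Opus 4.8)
The plan is to argue by contradiction, using only numerical data already in place: the class of a fibre, the adjunction-type identities of Lemma~\ref{lem:totalgeod} for totally geodesic curves, and the fact that the total Milnor number equals $3$ (Corollary~\ref{coro:numsing}). Suppose some singular fibre $X_{s_0}$ of $\a$ were totally geodesic. Two structural facts make Lemma~\ref{lem:totalgeod} directly applicable to $X_{s_0}$ itself. First, all fibres of $\a$ are reduced (Corollary~\ref{coro:numsing}), so $X_{s_0}=\Dred=\sum_{i=1}^n D_i$ with $X_{s_0}^2=0$. Second, any two local branches of totally geodesic curves in $\twoball$ are pieces of complex geodesics, i.e.\ slices by complex lines, and two distinct complex lines can only meet transversally; hence every singular point of $X_{s_0}$ is an ordinary multiple point with transverse smooth branches, precisely the configuration assumed in Lemma~\ref{lem:totalgeod}.

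First I would extract two numerical identities. Since every fibre satisfies $X_{s_0}^2=0$ and, by Theorem~\ref{th:genus}, $K_X\cdot X_{s_0}=2\cdot 19-2=36$, the first identity of Lemma~\ref{lem:totalgeod} gives
$$36=K_X\cdot X_{s_0}=3\sum_{i=1}^n\bigl(g(\hD_i)-1\bigr),\qquad\text{so}\qquad \sum_{i=1}^n\bigl(g(\hD_i)-1\bigr)=12.$$
Consequently $e(\hD)=\sum_{i=1}^n\bigl(2-2g(\hD_i)\bigr)=-24$, and the second identity of Lemma~\ref{lem:totalgeod} forces
$$0=X_{s_0}^2=\tfrac12 e(\hD)+\tdelta=-12+\tdelta,\qquad\text{so}\qquad \tdelta=12.$$

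The crux is then to convert $\tdelta$ into the Milnor number of the fibre. At an ordinary point with $b_x$ transverse branches one has $\dan_x=\binom{b_x}{2}$ and $\dtop_x=b_x-1$, so by Proposition~\ref{prop:chijump} its Milnor number is $\mu_x=2\dan_x-\dtop_x=(b_x-1)^2$, while its contribution to $\tdelta$ is $b_x(b_x-1)=\mu_x+(b_x-1)$. Summing over the singular points of $X_{s_0}$ and using $b_x\ge 2$, so that $(b_x-1)^2\ge b_x-1$, gives
$$12=\tdelta=\sum_x\mu_x+\sum_x(b_x-1)\le 2\sum_x\mu_x,$$
whence the Milnor number of $X_{s_0}$ is at least $6$. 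But Corollary~\ref{coro:numsing}(d), equation~\eqref{eq:sumMilnor}, bounds the total Milnor number over all singular fibres by $3$, so in particular $\sum_x\mu_x\le 3$, a contradiction. Therefore no singular fibre of $\a$ is totally geodesic.

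The main obstacle, though a comparatively mild one, is the passage from the intersection-theoretic value $\tdelta=12$ to the lower bound $\sum_x\mu_x\ge 6$: it depends on knowing that a totally geodesic fibre can only acquire ordinary transverse multiple points (so that $\mu_x=(b_x-1)^2$), which rests on the transversality of distinct complex geodesics in $\twoball$. Once this is secured, the argument needs no information about the number or location of the singular fibres: it pits the two independent evaluations of the self-intersection of a singular fibre against each other — one through $e(\hD)$ and $\tdelta$, the other through the constraint that the total Milnor number is only $3$ — and the incompatibility yields the theorem.
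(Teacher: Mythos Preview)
Your proof is correct and follows essentially the same route as the paper: both use the reducedness of the fibres (Corollary~\ref{coro:numsing}) and the transversality of totally geodesic branches to apply Lemma~\ref{lem:totalgeod}, obtain $\tdelta=12$ (equivalently $\dan=6$), and then contradict the Milnor-number bound $\sum_x\mu_x\le3$ from~\eqref{eq:sumMilnor}. The only cosmetic difference is the endgame: the paper deduces from $\sum_x(b_x-1)^2\le3$ that each $b_x=2$ and $k\le3$, hence $\tdelta\le6$, whereas you package this as the single inequality $\tdelta=\sum_x\mu_x+\sum_x(b_x-1)\le2\sum_x\mu_x$, which is a slightly cleaner way to reach the same contradiction.
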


\begin{proof}  Let $E$ be a singular fiber of $\a$ and let $\hE$ be the normalization of $E$.  Assume for the sake of proof by contradiction that $E$ is totally
geodesic.  According to
Lemma~\ref{lem:totalgeod},
$$
E\cdot E=\frac12 e(\hE)+2\dan(E)
$$
and moreover, $g=g(\hE)+\dan(E)$ and $E\cdot E=0$ since $E$ is a fiber of the fibration, hence $1-g+3\dan(E)=0$. Since we have shown that $g=19$ in Theorem~\ref{th:genus}, this leads to $\dan(E)=6$.  However, totally geodesic curves have simple crossings and computations of Lemma~\ref{lem:totalgeod} and Proposition~\ref{prop:chijump} show that if $P_1,\dots,P_k$ are the singular points of $E$ with $b_i$ local branches at $P_i$ then $\sum_{i=1}^k\mu_{P_i}=\sum_{i=1}^k(2\dan_{P_i}-\dtop_{P_i})=\sum_{i=1}^k(b_i-1)^2\leq 3$ by $(\ref{eq:sumMilnor})$. The only possibility is $k\leq3$ and $b_i=2$ for all $i$  but then $\dan(E)=\frac12\sum_{i=1}^kb_i(b_i-1)\leq 3$, a contradiction.
\end{proof}


\subsection{}In his PhD thesis~\cite{Li}, R.~Livn\'e constructed 
two-ball quotients by taking branched coverings of some generalized universal 
elliptic curves with level structure, and by construction, these surfaces admit
a fibration onto a curve. The Albanese fibration of the Cartwright-Steger
surface does not appear in the same fashion, but one can exhibit another 
(rational) fibration from $X$ onto $\P^1_\C$ appearing in a quite similar way to Livn\'e's. Its generic fiber has genus~109, and
$\sum_{i=1}^4 C_i$, with $C_i$ given in \S\ref{sec:MirrortypeA}, is one of the fibers (cf.~\cite[\S6]{CKY}).


\begin{thebibliography}{12mm}


\bibitem[BHPV]{BHPV} Barth, W. P., Hulek, K., Peters, Chris A. M., Van de
Ven, A., Compact complex surfaces. Second edition. Ergebnisse der
Mathematik und ihrer Grenzgebiete. 3. Folge. A Series of Modern
Surveys in Mathematics 4. Springer-Verlag, Berlin, 2004.

\bibitem[Be]{Be} Beauville, A., Complex algebraic surfaces. Translated from the 1978 French original by R. Barlow, with assistance from N. I. Shepherd-Barron and M. Reid. Second edition. London Mathematical Society Student Texts, 34. Cambridge University Press, Cambridge, 1996.


\bibitem[BG]{BG}  Buchweitz, R.-O., Greuel, G.-M., The Milnor number and deformations of complex curve singularities, Invent. Math. 58 (1980), 241--281.

\bibitem[CKY]{CKY} Cartwright, D., Koziarz, V., Yeung, S.-K., On the Cartwright-Steger surface, long version, arXiv 1412.4137, see also \verb'http://www.maths.usyd.edu.au/u/donaldc/cs-surface/'.

\bibitem[CS1]{CS} Cartwright, D., Steger, T., Enumeration of the $50$ fake projective planes, C. R. Acad. Sci. Paris, Ser. 1,
348 (2010), 11--13, see also weblink provided,\\
\verb'http://www.maths.usyd.edu.au/u/donaldc/fakeprojectiveplanes/'.

\bibitem[CS2]{CS2} Cartwright, D., Steger, T., Finding generators and relations for groups acting on the hyperbolic ball, preprint.

\bibitem[DM1]{DM1}  Deligne, P., Mostow, G. D., Monodromy of hypergeometric functions and nonlattice integral monodromy, Inst. Hautes \'Etudes Sci. Publ. Math. 63 (1986), 5--89.

\bibitem[DM2]{DM2} Deligne, P., Mostow, G. D., Commensurabilities among
 lattices in ${\rm PU}(1,n)$.  Annals of Mathematics Studies, 132.
 Princeton University Press, Princeton, NJ, 1993.

\bibitem[Der1]{Der1}  Deraux, M., A negatively curved K\"ahler threefold not covered by the ball, Invent. Math. 160 (2005), 501--525.

\bibitem[Der2]{Der2}  Deraux, M., Forgetful maps between Deligne-Mostow ball quotients, Geom. Dedicata 150 (2011), 377--389

\bibitem[GH]{GH} Griffiths, P., Harris, J., Principles of algebraic geometry, John Wiley \& Sons, Inc., 1978.


\bibitem[K]{K} Keum, J., Toward a geometric construction of fake projective planes, Rend. Lincei Mat. Appl. 23 (2012), 137--155.

\bibitem[Li]{Li} Livn\'e, R., On certain covers of the universal elliptic curve, Ph. D. Thesis, Harvard University, 1981

\bibitem[Mo1]{Mos1} Mostow, G. D, On a remarkable class of polyhedra in complex hyperbolic space, Pacific J. Math. 86 (1980), 171--276.

\bibitem[Mo2]{Mos2} Mostow, G. D, Generalized Picard lattices arising from half-integral conditions, Inst. Hautes \'Etudes Sci. Publ. Math. 63 (1986), 91--106.

(Fifth Edition), Wiley, 1991.


\bibitem[Pa]{Parker} Parker, J. R., Complex hyperbolic lattices, 
Contemp. Math. 501 (2009), 1--42.

\bibitem[PY]{PY} Prasad, G., Yeung, S-K.,  Fake projective planes, Inv. Math. 168 (2007), 321--370; Addendum, Invent. Math. 182 (2010), 213--227.

\bibitem[R]{R} R\'emy, B., Covolume des groupes $S$-arith\'emiques et faux plans projectifs, [d'apr\`es Mumford, Prasad, Klingler, Yeung, Prasad-Yeung],
S\'eminaire Bourbaki, 60\`eme ann\'ee, 2007-2008, $n^o$ 984.

\bibitem[Sa]{Sa}  Sauter, J. K., Jr, Isomorphisms among monodromy groups and applications to lattices in ${\rm PU}(1,2)$, Pacific J. Math. 146 (1990), 331--384.

\bibitem[ST]{ST}  Shephard, G. C., Todd, J. A., Finite unitary reflection groups, Canadian J. Math. 6 (1954), 274--304.

\bibitem[T]{T} Toledo, D., Maps between complex hyperbolic surfaces, Special volume dedicated to the memory of Hanna Miriam Sandler (1960--1999), Geom. Dedicata 97 (2003), 115--128.

\bibitem[U]{U} Ueno, K., Classification theory of algebraic varieties and compact complex surfaces, Lecture Notes in Mathematics, 439, Springer-Verlag, Berlin, 1975.


\bibitem[X]{X} Xiao, G., Surfaces fibr\'ees en courbes de genre deux, Lecture Notes in Mathematics, 1137, Springer-Verlag, Berlin, 1985.

\bibitem[Y1]{Y1} Yeung, S.-K., Classification of fake projective planes, Handbook of
Geometric Analysis, Vol II, Higher Education Press, Beijing, 2010, 391-431. 

\bibitem[Y2]{Y} Yeung, S.-K., Classification of surfaces of general type with Euler number $3$, Journ. f\"ur die reine und ang. math., 679(2013), 1--22, corrected version,  \verb'http://www.math.purdue.edu/'$\sim$\verb'yeung/'.


\end{thebibliography}
\end{document}